\numberwithin{equation}{section}
\newtheorem{theorem}{Theorem}[section]
\newtheorem{lemma}[theorem]{Lemma}
\newtheorem{proposition}[theorem]{Proposition}
\newtheorem{corollary}[theorem]{Corollary}
\newtheorem{remark}[theorem]{Remark}
\newtheorem{definition}{Definition}
\newcommand{\mc}[1]{{\mathcal #1}}
\newcommand{\bb}[1]{{\mathbb #1}}
\newcommand{\eps}{\varepsilon}
\newcommand{\<}{\langle}
\renewcommand{\>}{\rangle}
\newcommand{\p}{\partial}
\newcommand{\pfrac}[2]{\genfrac{}{}{}{1}{#1}{#2}}
\newcommand{\at}[2]{\genfrac{}{}{0pt}{}{#1}{#2}}
 \newcommand{\A}{C^{1,2}([0,T]\times [0,1])}
\newcommand{\B}{\mathbb H^\alpha}
\keywords{Phase transition, heat equation, robin boundary conditions,
hydrodynamic limit, slowed exclusion}
\date{}
\begin{document}

\title[Phase transition of a heat equation]{Phase transition of a Heat equation with Robin's boundary conditions and exclusion process}

%    author one information
% \author[short version for running head]{name for top of paper}
\author{Tertuliano Franco}
\address{UFBA\\
 Instituto de Matem\'atica, Campus de Ondina, Av. Adhemar de Barros, S/N. CEP 40170-110\\
Salvador, Brasil}
\curraddr{}
\email{tertu@impa.br}
\thanks{}

%    author two information
\author{Patr\'{\i}cia Gon\c{c}alves}
\address{CMAT, Centro de Matem\'atica da Universidade do Minho, Campus de Gualtar, 4710-057, Braga, Portugal}
\curraddr{}
\email{patg@math.uminho.pt}
\thanks{}

\author{Adriana Neumann}
\address{UFRGS, Instituto de Matem\'atica, Campus do Vale, Av. Bento Gon\c calves, 9500. CEP 91509-900, Porto Alegre, Brasil}
\curraddr{}
\email{aneumann@impa.br}
\thanks{}

\subjclass[2010]{60K35,26A24,35K55}

\begin{abstract}
For a heat equation with Robin's boundary conditions which depends on a parameter $\alpha>0$, we prove that its unique weak solution $\rho^\alpha$ converges, when $\alpha$ goes to zero or to infinity, to the unique weak solution of the heat equation with Neumann's boundary conditions or the heat equation with periodic boundary
conditions, respectively.  To this end, we use uniform  bounds on a Sobolev norm of $\rho^\alpha$ obtained from the hydrodynamic limit of the symmetric slowed exclusion process, plus a careful analysis of boundary terms.
\end{abstract}

\maketitle

\section{Introduction}

 Scaling limits of discrete particle systems is a central question in Statistical Mechanics. In the case of interacting particle systems, where particles evolve according to some rule of interaction, it is of interest to characterize, in the continuum limit, the time trajectory of the spatial density of particles. Such limits are given in terms of solutions of partial differential equations and  different particle systems are governed by different types of partial differential equations, with a large literature on the subject.  As a reference, we cite the book  \cite{kl}.

 In this work we are concerned with convergence of solutions of a particular partial differential equation emerging from particle systems that we describe as follows. Given $\alpha>0$, denote by  $\rho^\alpha$ the unique weak solution of the heat equation with Robin's boundary conditions given by
 \begin{equation*}
\left\{
\begin{array}{ll}
 \partial_t \rho(t,u) \; =\; \Delta \rho(t,u)\,, &t \geq 0,\, u\in (0,1)\,,\\
 \partial_u \rho(t,0) \; =\;\partial_u \rho(t,1)= \alpha(\rho(t,0)-\rho(t,1))\,,  &t \geq 0\,,\\
 \rho(0,u) \;=\; \rho_0(u), &u \in (0,1)\,.
\end{array}
\right.
\end{equation*}
Such equation is related to a particle system with exclusion dynamics, see \cite{fgn,fl}, in a sense which will be precise later. We notice that the boundary conditions of Robin's type as given above mean a passage of mass between $u=0$ and $u=1$. These boundary conditions arise from considering the  particle systems evolving on the discrete torus. Moreover,  they reflect the Fick's Law: the rate at which the mass crosses the boundary is proportional to the difference of densities in each medium.

The main theorem we present here is the following convergence in $L^2$:
\begin{equation*}
\displaystyle \lim_{\alpha\to 0} \rho^\alpha \; = \;  \rho^0\quad \textrm{ and }\quad \displaystyle \lim_{\alpha\to \infty} \rho^\alpha \; = \;  \rho^\infty\;,
\end{equation*}
where $\rho^0$ is the unique weak solution of the heat equation with Neumann's boundary  conditions
 \begin{equation*}
\begin{cases}
 \partial_t \rho(t,u) \; =\; \Delta \rho(t,u)\,,&t \geq 0,\, u\in (0,1)\,,\\
 \partial_u \rho(t,0) \; =\;\partial_u \rho(t,1)= 0\,,&t \geq 0\,,\\
  \rho(0,u) \;=\; \rho_0(u)\,, &u \in (0,1)\,,\\
\end{cases}
\end{equation*}
and  $\rho^\infty$ is the unique weak solution of the heat equation with periodic boundary conditions
\begin{equation*}
\begin{cases}
 \partial_t \rho(t,u) \; =\; \Delta \rho(t,u)\,,\qquad & t \geq 0,\, u\in \bb T\,,\\
\rho(0,u) \;=\; \rho_0(u)\,, &u \in \bb T\,,
\end{cases}
\end{equation*}
 where $\bb T$ above is the continuous torus. The outline of its proof is the following. Based on energy estimates coming from the particle system, we obtain that the set $\{\rho^\alpha\,;\,\alpha>0\}$ is bounded in a Sobolev's type norm, implying its relative compactness. On the other hand, a careful analysis shows that the limit along subsequences of $\rho^\alpha$ are concentrated on weak solutions of the corresponding equations, when $\alpha$ goes to zero or to infinity. Uniqueness of  weak solutions in each case ensures the convergence.

 When $\alpha$ goes to zero or to infinity, the corresponding limits of $\rho^\alpha$ are driven by solutions of partial differential equations of a different kind from the original one. For this reason, we employ the term \emph{phase transition}.  To the best of our knowledge,  this type of result is not a standard one in the partial differential equations literature, without previous nomenclature on it.

 One of the novelties of  this work, besides the aforementioned theorem, is the approach itself: it is used here the framework of probability theory to obtain knowledge on the behavior of  solutions for a class of heat equations with Robin's boundary conditions as given above.

 Next, we describe the particle system which provides the bounding on a Sobolev's type norm for $\rho^\alpha$. This particle system belongs to the class of Markov processes and evolves on the $1$-dimensional discrete torus with $n$ sites. The elements of its state space are called configurations and are such that at each site of the torus, there is at most a particle per site, therefore it coined the name exclusion process.  Its dynamics can be informally described as follows. At each bond of the torus is associated an exponential clock in such a way that clocks associated to different bonds are independent. When a clock rings at a bond, the occupation at the vertices of that bond are interchanged. Of course, if both vertices are occupied or empty, nothing happens. All the clocks have parameter one, except one particular bond, whose parameter is given by $\alpha n^{-\beta}$, where $\alpha,\beta>0$. Or else, this bond slows down the passage of particles across it. It is the existence of this special bond that
gives rise to the boundary conditions of its associated partial differential equation.%, namely for $\beta\in{[0,1)}$, $\beta=1$ or $\beta\in{(1,\infty]}$, the heat equation with periodic boundary conditions,  Robin's boundary conditions or Neumann's boundary conditions.

For $\beta=1$, the hydrodynamic limit of this exclusion process is a particular case of the processes studied in  \cite{fl}. There, it was proved that the hydrodynamic limit is driven by a generalized partial differential equation involving a Radon-Nikodym derivative with respect to the Lebesgue measure plus a delta of Dirac. As an additional result, we deduce here another proof of this hydrodynamic limit, identifying $\rho^\alpha$ as a solution of a  classical  equation, namely the heat equation with Robin's boundary conditions as given above. Furthermore, by the results proved in \cite{fgn,fl}, we  get that $\rho^\alpha$ has a Sobolev's type norm bounded by a constant that does not depend on $\alpha$. Such constant corresponds to the entropy bound of any measure defined in the state space of the process with respect to its invariant measure.

We point out that, despite knowing the hydrodynamic limit for this process, this different characterization of the limit density of particles, given in terms of a classical partial differential equation, is  new. The most delicate step in the proof of last result is the proof of uniqueness of weak solutions of the heat equation with Robin's boundary conditions, requiring the construction of a inverse of the laplacian operator acting on a suitable domain.

The motivation of this work came from \cite{fgn} where the hydrodynamic limit for the exclusion process with a slow bond was shown to be given by the heat equation with periodic boundary conditions or the heat equation with Neumann's boundary conditions, depending whether $\beta<1$ or $\beta>1$, respectively.  This  suggested to us  that,  when taking the limit in $\alpha$ in the partial differential equation corresponding to $\beta=1$,  one should recover both these equations.

The paper is divided as follows. We give definitions and state our results in Section \ref{s2}. In Section \ref{s3}, we prove uniqueness of weak solutions of the heat equation with Robin's boundary conditions. In Section \ref{s4}, we introduce the exclusion process with a slow bond, we state and  sketch the proof of its hydrodynamic limit and we obtain bounds on a Sobolev's type norm of $\rho^\alpha$. In Section \ref{s5}, we prove our main result, namely the phase transition for the heat equation with Robin's boundary conditions.
In the Appendix, we present some results that are needed in due course.

\textbf{Notations:} We denote by $\bb T$ the continuous one dimensional torus $\bb R/\bb Z$.
By an abuse of notation, we denote $\<\cdot,\cdot\>$ both the inner product in $L^2(\bb T)$ and in $L^2[0,1]$,  and we denote by $\|\cdot\|_{L^2[0,1]}$ its norm. We denote by ${\bf{1}}_A(x)$ the function which is equal to one if $x\in A$ and zero if $x\notin A$ and by $\Delta$ the second space derivative.

\section{Statement of results}\label{s2}
In this section, we begin by defining  weak solutions of the partial differential equations that we deal with, namely the heat equation with periodic, Robin's and Neumann's boundary conditions. In the sequence, we present the exclusion process with a slow bond, we explain its relation with those equations, and how to obtain from it the boundedness of a Sobolev's type norm of weak solutions of the heat equation with Robin's boundary conditions. At last, we state our main result.

\begin{definition}\label{space C^n,m}
For $n,m\in{\mathbb{N}}$ and $A,B$ intervals of $\bb R$ or $\bb T$, let  $C^{n,m}(A\times B)$ be the space of  real valued functions defined on $A\times B$
of class $C^n$ in the first variable and of class $C^m$ in the second variable.
For functions of one variable, we simply write $C^n(A)$.
\end{definition}

Now, we define weak solutions of the partial differential equations that we deal with.
\begin{definition}\label{def edp 1} We say that $\rho$ is a weak solution of the heat equation with periodic boundary conditions
 \begin{equation}   \label{he}
\left\{
\begin{array}{ll}
 \partial_t \rho(t,u) \; =\; \Delta \rho(t,u)\,, \qquad& t \geq 0,\, u\in \bb T\,,\\
  \rho(0,u) \;=\; \rho_0(u), &u \in \bb T\,.
\end{array}
\right.
\end{equation}
if $\rho$ is measurable and, for any  $t\in{[0,T]}$ and any $H\in C^{1,2}([0,T]\times\mathbb{T}) $,
\begin{equation}\label{eqint1}
\begin{split}
&\<\rho_t,\,H_t\>-\<\rho_0,\,H_0\>- \int_0^t\big\< \rho_s,\, \partial_s H_s+\Delta  H_s\big\>\, ds\;=\;0\,.
\end{split}
\end{equation}

\end{definition}

Above and in the sequel, a subindex in a function means a variable, \emph{not a derivative}. For instance, above by $H_s(u)$, we mean $H(s,u)$.

To define a weak solution of the heat equation with Robin's or Neumann's boundary conditions it is necessary to introduce the notion of Sobolev's spaces.

\begin{definition}\label{Sobolevdefinition}
Let  $\mc H^1$ be the set of all locally summable functions $\zeta: [0,1]\to\bb R$ such that
there exists a function $\p_u\zeta\in L^2[0,1]$ satisfying
\begin{equation*}
 \<\partial_uG,\zeta\>\,=\,-\<G,\partial_u\zeta\>\,,
\end{equation*}
for all $G\in C^{\infty}(0,1)$ with compact support.
For $\zeta\in\mc H^1$, we define the norm
\begin{equation*}
 \Vert \zeta\Vert_{\mc H^1}\,:=\, \Big(\Vert \zeta\Vert_{L^2[0,1]}^2+\Vert\partial_u\zeta\Vert_{L^2[0,1]}^2\Big)^{1/2}\,.
\end{equation*}
Let $L^2(0,T;\mc H^1)$ be the space of
 all measurable functions
$\xi:[0,T]\to \mc H^1$ such that
\begin{equation*}
\Vert\xi \Vert_{L^2(0,T;\mc H^1)}^2 \,
:=\,\int_0^T \Vert \xi_t\Vert_{\mc H^1}^2\,dt\,<\,\infty\,.
\end{equation*}
\end{definition}

% Next, we define a space of test functions whose are, essentially, functions $C^2$ in space and $C^1$ in time, having as domain the torus minus a point, which can be identified with the open interval. Denote by $[0,1]$ the closed interval, which should not be misunderstood with $\bb T$, the continuous torus.

% \begin{definition}\label{space C^n,m}
%  The space  $\A$ consists of all functions $H\in C^{1,2}([0,T]\times (0,1))$ having an extension $\tilde{H}:[0,T]\times [0,1]\to \bb R$ in the sense:
% \begin{itemize}
%  \item $\tilde{H}\in C^{1,2}([0,T]\times[0,1])$;
%  \item $\tilde H$ restricted to $[0,T]\times(0,1)$ coincides with $H$.
% \end{itemize}
% \end{definition}
% We emphasize that $\A$ consists in functions possibly discontinuous  at one point in the torus, with a $C^{1,2}$ extension to the time interval times the closed interval $[0,1]$.
\begin{definition}\label{heat equation Robin}
We say that $\rho$ is a weak solution of the heat equation with Robin's boundary conditions given by
\begin{equation}\label{her}
\left\{
\begin{array}{ll}
 \partial_t \rho(t,u) \; =\; \Delta \rho(t,u)\,, &t \geq 0,\, u\in (0,1)\,,\\
 \partial_u \rho(t,0) \; =\;\partial_u \rho(t,1)= \alpha(\rho(t,0)-\rho(t,1))\,, \qquad &t \geq 0,\\
 \rho(0,u) \;=\; \rho_0(u), &u \in (0,1)\,.
\end{array}
\right.
\end{equation}
if $\rho$ belongs to $L^2(0,T;\mathcal{H}^1)$ and, for  all $t\in [0,T]$ and for all  $H\in \A$,
\begin{equation}\label{eqint2}
\begin{split}
\< \rho_t,H_t\>\!-\!\<\rho_0,H_0\>
\!-\!\! \int_0^t\!\!\!\big\< \rho_s, \partial_s H_s +&\Delta H_s\big\> ds\!-\!\!\int_0^t\!\!\!(\rho_s(0)\partial_uH_s(0)-\rho_s(1)\partial_uH_s(1))\,ds\\
&+ \int_0^t \alpha(\rho_s(0)-\rho_s(1))(H_s(0)-H_s(1))\,ds=0\,.
\end{split}
\end{equation}
\end{definition}

\begin{definition}\label{heat equation Neumann}
 We say that $\rho$ is a weak solution of the heat equation with Neumann's boundary conditions
 \begin{equation}\label{hen}
\left\{
\begin{array}{ll}
 \partial_t \rho(t,u) \; =\; \Delta \rho(t,u)\,, &t \geq 0,\, u\in (0,1)\,,\\
 \partial_u \rho(t,0) \; =\;\partial_u \rho(t,1)= 0\,, \qquad &t \geq 0\,,\\
 \rho(0,u) \;=\; \rho_0(u), &u \in (0,1)\,.
\end{array}
\right.
\end{equation}
if $\rho$ belongs to $L^2(0,T;\mathcal{H}^1)$ and, for all $t\in [0,T]$ and for all  $H\in \A$,
\begin{equation}\label{eqint3}
\begin{split}
&\< \rho_t,H_t\>\!-\!\<\rho_0,H_0\> \!- \!\!\!\int_0^t\!\!\!\!\big\< \rho_s, \partial_s H_s\!+\!\Delta H_s\big\> ds
\!-\!\!\!\int_0^t\!\!\!(\rho_s(0)\partial_u H_s(0)\!-\!\rho_s(1)\partial_uH_s(1))ds=0.\\
\end{split}
\end{equation}
\end{definition}

Since in Definitions \ref{heat equation Robin} and \ref{heat equation Neumann} we required that
$\rho\in L^2(0,T;\mathcal{H}^1)$,
the integrals at boundary points  are well-defined.  For  more details on Sobolev's spaces, we refer the reader to  \cite{e,l}.

Heuristically, in order to establish the integral equation for the
weak solution of each one of the equations above, one should multiply both sides of the differential equation by a test function $H$, then integrate both in space and time and finally, perform twice a formal integration by parts. Then, applying the  respective boundary conditions we are
lead to the corresponding integral equation. This reasoning also shows  that any strong solution is a weak solution of the respective equation.

We  define a measure $W_\alpha$  on $\bb T$  by
\begin{equation}\label{W}
W_\alpha(du)=du+\frac{1}{\alpha}\,\delta_0(du)\,,
\end{equation}
that is,  $W_\alpha$ is the sum of the Lebesgue measure and the Dirac measure concentrated on $0\in\bb T$ with weight $1/\alpha$.  We denote by $\<\cdot,\cdot\>_\alpha$ the inner product in $L^2$ of $\bb T$  with respect to the measure $W_\alpha$.

\begin{definition}\label{def7}
Let $L^2_{W_{\alpha}}([0,T]\times\bb T)$ be the Hilbert space composed of measurable functions $f:[0,T]\times\bb T\rightarrow{\bb R}$ with
$\|f\|_{\alpha}^2:= \<\!\<f,f\>\!\>_{\alpha}<\infty$, where for $f,g: [0,T] \times \mathbb{T} \to \mathbb{R}$,
\begin{equation*}
\<\!\<f,g\>\!\>_{\alpha}\,=\, \int_0^T   \int_{\bb T}
 f_s( u) \, g_s(u)\,{W_\alpha}(du)\,ds\,.
\end{equation*}
By  $\<\!\<\cdot,\cdot\>\!\>$ we denote the usual inner product corresponding to the Hilbert space $L^2([0,T]\times\bb T)$. Or else,
\begin{equation*}
\<\!\<f,g\>\!\>\,=\, \int_0^T   \int_{\bb T}
 f_s( u) \, g_s(u)\,du\,ds\,.
\end{equation*}
By abuse of notation, we will use the same notation $\<\!\<\cdot,\cdot\>\!\>$ for the inner product on the Hilbert space $L^2([0,T]\times[0,1])$.
\end{definition}

\begin{proposition}\label{uniq_sobolev}
 For any $\alpha>0$, there exists a weak solution $\rho^\alpha:[0,T]\times[0,1]\to [0,1]$ of \eqref{her}. Moreover, such solution is unique and
satisfies the inequality
\begin{equation*}%\label{hip2}
 \sup_{H}\Big\{\<\!\<\rho^{\alpha},\partial_u H\>\!\>-2\<\!\<H,H\>\!\>_{\alpha}\Big\}\leq K_0\,,
\end{equation*}
where $K_0$ is a constant that does not depend on $\alpha$ and the supremum is taken over functions $H\in C^{\,0,1}([0,T]\times\bb T)$, see Definition \ref{space C^n,m}.
\end{proposition}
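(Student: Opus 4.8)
The plan is to establish the three assertions — existence, the uniform energy bound, and uniqueness — by two distinct mechanisms: the first two through the hydrodynamic limit of the microscopic particle system, and the last through a spectral analysis of the Laplacian carrying Robin's boundary conditions.

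For existence and the bound, I would realize $\rho^\alpha$ as the scaling limit of the empirical density of the symmetric exclusion process with a slow bond of rate $\alpha n^{-1}$ (the case $\beta=1$). One proves tightness of the laws of the empirical measures $\pi^n_t = n^{-1}\sum_x \eta_t(x)\,\delta_{x/n}$ on the appropriate path space, and identifies each limit point as concentrated on trajectories $\rho_t(u)\,du$ with $\rho$ a weak solution of \eqref{her}. Since the occupation variables lie in $\{0,1\}$, the empirical density lies in $[0,1]$, and this constraint is preserved in the limit, giving $\rho^\alpha:[0,T]\times[0,1]\to[0,1]$. The uniform bound is then the continuum version of the microscopic energy (Dirichlet form) estimate. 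The \emph{crucial} point for $\alpha$-independence is that the relevant invariant measure is the uniform Bernoulli product measure, which is invariant for the symmetric exclusion dynamics irrespective of the bond rates; hence the relative entropy of the initial law per site is finite and does not depend on $\alpha$, while the weight $1/\alpha$ on the Dirac mass at $0$ in $W_\alpha$ encodes the conductance $\alpha$ of the slow bond. Passing to the limit in the variational form of the estimate yields $\sup_H\{\langle\!\langle\rho^\alpha,\partial_uH\rangle\!\rangle - 2\langle\!\langle H,H\rangle\!\rangle_\alpha\}\le K_0$ with $K_0$ the $\alpha$-free entropy constant.

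For uniqueness, I would introduce the operator $\mathbb{L}_\alpha=\Delta$ on the domain of functions $f\in\mathcal{H}^2[0,1]$ satisfying $\partial_u f(0)=\partial_u f(1)=\alpha(f(0)-f(1))$. A single integration by parts gives, for such $f,g$, the identity $\langle\Delta f,g\rangle=-\int_0^1 f'g'\,du-\alpha(f(0)-f(1))(g(0)-g(1))$, whose right-hand side is symmetric in $f$ and $g$; thus $\mathbb{L}_\alpha$ is symmetric in $L^2[0,1]$. I would then construct the resolvent $(\lambda I-\mathbb{L}_\alpha)^{-1}$ for $\lambda>0$ — equivalently, invert $\Delta$ on the orthogonal complement of the constants, which span the kernel — and verify that it is a compact self-adjoint operator on $L^2[0,1]$. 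By the spectral theorem this furnishes a complete orthonormal system $\{f_k\}_{k\ge 0}$ with $\Delta f_k=-\lambda_k f_k$, each $f_k$ smooth on $[0,1]$ and therefore an admissible time-independent test function for \eqref{eqint2}.

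The \emph{decisive} observation is that the weak formulation \eqref{eqint2} is tailored to these eigenfunctions: taking $H_s\equiv f_k$, the relation $\partial_u f_k(0)=\partial_u f_k(1)=\alpha(f_k(0)-f_k(1))$ makes the two boundary integrals in \eqref{eqint2} cancel exactly, leaving $\langle\rho_t,f_k\rangle-\langle\rho_0,f_k\rangle+\lambda_k\int_0^t\langle\rho_s,f_k\rangle\,ds=0$. Writing $c_k(t)=\langle\rho_t,f_k\rangle$, this is the integral form of $c_k'=-\lambda_k c_k$, so $c_k(t)=c_k(0)\,e^{-\lambda_k t}$. Applying this to the difference $\bar\rho$ of two weak solutions with the same initial datum gives $c_k(0)=0$, hence $c_k\equiv 0$ for every $k$; completeness of $\{f_k\}$ then forces $\bar\rho_t=0$ for almost every $t$, which is uniqueness. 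I expect the \textbf{main obstacle} to be precisely this spectral step: the boundary conditions couple the two endpoints (a non-separated Sturm--Liouville problem), so one must check carefully that the boundary form is self-adjoint and that the resolvent is compact, in order to guarantee a complete eigenbasis — this is the delicate construction of the inverse Laplacian alluded to in the introduction.
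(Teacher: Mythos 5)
Your treatment of existence and of the $\alpha$-uniform bound is exactly the paper's route: realize $\rho^\alpha$ through the hydrodynamic limit of the slow-bond exclusion at $\beta=1$ (tightness plus identification of limit points as weak solutions of \eqref{her}, which also yields the $[0,1]$-valued property), with $K_0$ coming from the entropy bound relative to the Bernoulli measures, invariant irrespective of the bond rates; this is Theorem \ref{th:hlrm} together with Proposition \ref{Prop_03}.

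Your uniqueness argument is correct but genuinely different from the paper's. The paper never diagonalizes: it constructs the explicit Green kernel $G_\alpha$ for $(-\Delta)^{-1}_\alpha$ on $\B_0$, proves only symmetry and non-negativity (Proposition \ref{propinvlaplaciano}), and derives the identity $\langle \rho_t,(-\Delta)^{-1}_\alpha\rho_t\rangle=-2\int_0^t\langle\rho_s,\rho_s\rangle\,ds$ by a telescoping partition in time combined with a mollification $\rho^\delta_s$ (Proposition \ref{prop243}); this in turn forces the extension Lemma \ref{H2bc}, because $(-\Delta)^{-1}_\alpha\rho_{t_k}$ lies only in $\B_0$ and is not a $C^2$ test function admissible in \eqref{eqint2} as stated. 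Your spectral route trades those technicalities for the self-adjointness and compact-resolvent verification of the non-separated boundary problem, which you rightly flag as the delicate step but which is standard: symmetry of the boundary form is the same integration by parts the paper performs, unique solvability of $\lambda f-\Delta f=g$ follows since a homogeneous solution satisfies $\lambda\Vert f\Vert_{L^2[0,1]}^2+\Vert\partial_u f\Vert_{L^2[0,1]}^2+\tfrac{1}{\alpha}(\partial_u f(0))^2=0$, and the resolvent kernel is bounded, hence Hilbert--Schmidt. The payoff is real: your eigenfunctions solve a constant-coefficient ODE, so they are smooth and directly admissible in \eqref{eqint2}, the boundary terms cancel exactly as you computed, and each mode obeys a scalar Volterra equation killed by Gronwall — Lemma \ref{H2bc} and the partition/mollification machinery are bypassed entirely. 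One bookkeeping caution: carry out the eigenfunction extraction through the resolvent $(\lambda I-\Delta)^{-1}$, $\lambda>0$, as you propose, and not through ``$\Delta$ inverted on $\{1\}^{\perp}$'' taken literally. The paper's inverse is normalized by $H(0)=0$ and does \emph{not} map $\{1\}^{\perp}$ into itself; an eigenfunction of the raw inverse would have to satisfy the Robin conditions \emph{and} vanish at $0$, and a short computation shows no such nontrivial eigenfunction exists. Composing with the orthogonal projection onto $\{1\}^{\perp}$, or working with the resolvent on all of $L^2[0,1]$, removes the spurious constraint: the genuine eigenfunctions satisfy the Robin conditions and have zero mean but need not vanish at $0$ (for instance, $\cos(2k\pi u)$ is an eigenfunction for every $\alpha$). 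With that adjustment your argument closes.
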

The uniqueness  of  weak solutions stated in the proposition above is proved in Section \ref{s3} via the construction of the inverse of the laplacian operator defined on a suitable domain. The existence of a weak solution and the inequality above are proved through the hydrodynamic limit of the symmetric exclusion process with a slow bond,  as shown in Section \ref{s4}.

We state now our main result:
\begin{theorem}\label{pdePT}
 For $\alpha>0$, let $\rho^\alpha:[0,T]\times[0,1]\to [0,1]$ be the unique weak solution of \eqref{her}.
Then,
\begin{equation*}
\displaystyle \lim_{\alpha\to 0} \rho^\alpha \; = \;  \rho^0\quad \textrm{ and }\quad \displaystyle \lim_{\alpha\to \infty} \rho^\alpha \; = \;  \rho^\infty
\end{equation*}
in $L^2([0,T]\times [0,1])$, where
$\rho^0$ and $\rho^\infty$ are the unique weak solutions of equations \eqref{hen} and
\eqref{he}, respectively.
\end{theorem}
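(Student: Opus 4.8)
The plan is to realize the three-step scheme announced in the introduction: extract uniform compactness from Proposition~\ref{uniq_sobolev}, identify every subsequential limit as a weak solution of the target equation, and then use uniqueness to promote subsequential convergence to convergence of the whole family. The first task is to unpack the variational bound of Proposition~\ref{uniq_sobolev}. Writing $H$ on the torus $\bb T$ and integrating by parts on $[0,1]$ (where $\rho^\alpha$ has a weak derivative together with a possible jump at the point $0\equiv 1$), one finds $\dl\rho^\alpha,\p_u H\dr = \int_0^T(\rho^\alpha_s(1)-\rho^\alpha_s(0))H_s(0)\,ds - \dl\p_u\rho^\alpha,H\dr$, together with $\dl H,H\dr_\alpha = \dl H,H\dr + \tfrac1\alpha\int_0^T H_s(0)^2\,ds$. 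Optimizing the resulting functional over $H$---the bulk shape of $H$ and its trace $H_s(0)$ decouple because the trace sees only a measure-zero set---shows that the supremum equals, up to positive absolute constants, $\|\p_u\rho^\alpha\|^2_{L^2([0,T]\times[0,1])}$ plus $\alpha\int_0^T(\rho^\alpha_s(0)-\rho^\alpha_s(1))^2\,ds$. Hence the single constant $K_0$ yields two $\alpha$-independent estimates: a spatial Sobolev bound $\|\p_u\rho^\alpha\|^2_{L^2}\le C$ and a weighted jump bound $\alpha\int_0^T(\rho^\alpha_s(0)-\rho^\alpha_s(1))^2\,ds\le C$.

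Together with $0\le\rho^\alpha\le1$, the first estimate bounds $\{\rho^\alpha\}$ in $L^2(0,T;\mc H^1)$. For relative compactness in $L^2([0,T]\times[0,1])$ I would pair this spatial regularity with temporal regularity read off from \eqref{eqint2}: testing against time-independent $H$ controls $t\mapsto\dl\rho^\alpha_t,H\dr$, and here the weighted jump bound is exactly what tames the $\alpha$-dependent boundary contribution (its $L^2(0,T)$-norm is $O(|H(0)-H(1)|\sqrt{\alpha})$ as $\alpha\to0$, and it vanishes outright for torus test functions, which is the relevant class as $\alpha\to\infty$). An Aubin--Lions--Simon argument, run in the appropriate class of test functions, then gives relative compactness in $L^2([0,T]\times[0,1])$; along a subsequence $\rho^{\alpha_k}\to\rho$ strongly in $L^2$ and, by the Sobolev bound, weakly in $L^2(0,T;\mc H^1)$, so in particular the traces $\rho^{\alpha_k}_\cdot(0),\rho^{\alpha_k}_\cdot(1)$ converge weakly in $L^2(0,T)$ to $\rho_\cdot(0),\rho_\cdot(1)$ by weak continuity of the trace map.

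It remains to identify $\rho$. Since \eqref{her} and its weak formulation \eqref{eqint2} are linear in $\rho$, every interior term passes to the limit by the above convergences, and the fixed-time term $\dl\rho_t,H_t\dr$ passes for a.e.\ $t$ from strong $L^2$ convergence, extended to all $t$ by continuity in time of the limit. The two boundary terms are the crux. For the Neumann limit $\alpha\to0$ the trace term $\int_0^t(\rho^{\alpha_k}_s(0)\p_u H_s(0)-\rho^{\alpha_k}_s(1)\p_u H_s(1))\,ds$ converges by weak trace convergence, while the $\alpha$-term is $O(\sqrt{\alpha_k})$ by Cauchy--Schwarz and the weighted jump bound, hence vanishes; this produces exactly the Neumann formulation \eqref{eqint3}. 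For the periodic limit $\alpha\to\infty$ I would test against $H\in C^{1,2}([0,T]\times\bb T)$, so that $H_s(0)=H_s(1)$ and $\p_uH_s(0)=\p_uH_s(1)$: the $\alpha$-term then vanishes identically, and the trace term reduces to $\int_0^t\p_uH_s(0)(\rho^{\alpha_k}_s(0)-\rho^{\alpha_k}_s(1))\,ds$, which vanishes because the weighted jump bound forces $\int_0^T(\rho^{\alpha_k}_s(0)-\rho^{\alpha_k}_s(1))^2\,ds\le C/\alpha_k\to0$; this produces exactly the periodic formulation \eqref{eqint1}. In both cases $\rho$ is a weak solution of the target equation, so by uniqueness of weak solutions of \eqref{hen} and \eqref{he} the limit is independent of the subsequence and equals $\rho^0$, respectively $\rho^\infty$. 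A standard subsequence argument then upgrades this to convergence of the full family in $L^2([0,T]\times[0,1])$.

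The main obstacle is the boundary analysis, which rests entirely on the weighted jump bound $\alpha\int_0^T(\rho^\alpha_s(0)-\rho^\alpha_s(1))^2\,ds\le C$; this quantity plays opposite roles in the two regimes, being just enough to kill the explicit $\alpha$-factor in the Robin term as $\alpha\to0$ (leaving a free Neumann boundary), and forcing the boundary jump of $\rho^\alpha$ to zero as $\alpha\to\infty$ (welding $[0,1]$ back into the torus). A secondary difficulty, which I expect to require the most care, is obtaining strong rather than merely weak $L^2$ convergence: the $\alpha$-dependent boundary flux obstructs a uniform negative-norm bound on $\p_t\rho^\alpha$ for arbitrary test functions, so the temporal-compactness step must be carried out in the restricted class of admissible test functions (time-independent for $\alpha\to0$, torus-periodic for $\alpha\to\infty$), which is precisely where the jump bound must be invoked delicately.
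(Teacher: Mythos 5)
Your proposal is correct and follows essentially the same route as the paper: the variational bound of Proposition~\ref{uniq_sobolev}, unpacked exactly as in Propositions~\ref{lema3Wa} and~\ref{fund}, yields your uniform $L^2(0,T;\mc H^1)$ bound and your weighted jump bound $\alpha\int_0^T(\rho^\alpha_s(0)-\rho^\alpha_s(1))^2\,ds\le C$ (the paper's $L^2$-bound on $\partial_u\rho^\alpha_\cdot(0)$ with respect to $W_\alpha$), and your compactness--identification--uniqueness scheme, with the Robin term vanishing as $\alpha\to 0$ and the trace-jump term vanishing against periodic test functions as $\alpha\to\infty$, coincides with Propositions~\ref{propositL1}--\ref{lemmapdePT2} and the concluding argument of Section~\ref{s5}. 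The only implementation-level differences --- you pass the Neumann trace term to the limit by weak continuity of traces under weak $L^2(0,T;\mc H^1)$ convergence instead of the paper's device of working with the twice-integrated-by-parts form \eqref{eqintbyparts} and Proposition~\ref{lemmaintegrais2}, and you run the temporal-compactness step in regime-restricted test classes, which is consistent with (indeed slightly more careful about the $\alpha$-dependence than) the paper's Proposition~\ref{novolema} and Arzel\`a--Ascoli argument in Proposition~\ref{lemmaintegrais0} --- do not alter the structure of the proof.
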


\section{Uniqueness of Weak Solutions}\label{s3}
We present here the proof of uniqueness of weak solutions of \eqref{her}.
Since the equation is linear, it is sufficient to consider the initial condition $\rho_0(\cdot)\equiv 0$.

We begin by defining the inverse of the laplacian operator on a suitable domain.
% To simplify notation,  for a given function $H:(0,1)\to\bb R$, we will denote
% \begin{equation*}
% H(0^+):= \lim_{u\to 0 } H(u)\quad \textrm{ and } \quad H(1^-):= \lim_{u\to 1 } H(u)
% \end{equation*}
%  along this section.
% We choose this notation in order to identify the torus $\bb T$ with the interval $[0,1)$.
Denote by  $L^2[0,1]^{\perp 1}$ the set of functions $g\in L^2[0,1]$ such that
\begin{equation*}
 \int_{0}^1 g(u)\,du\,=\, 0\,.
\end{equation*}
\begin{definition}\label{operatorext}
Let $\B$ be the space of functions $H:[0,1]\rightarrow\bb{R}$ satisfying
\begin{itemize}
%  \item $H$ is twice differentiable;
\item  $H\in C^1([0,1])$ and, moreover, the derivative $\partial_u H$ is  absolutely continuous;
\item $\Delta H(u)$ exists Lebesgue almost surely and $ \Delta H\in L^2[0,1]^{\perp 1}$;
\item  $H$ satisfies the boundary conditions
\begin{equation}\label{boundary conditions in Hbc}
\partial_u H(0)=\partial_u H(1)= \alpha(H(0)- H(1))\,.
\end{equation}
\end{itemize}
\end{definition}

In order to obtain the uniqueness of weak solutions, we will construct a inverse of the laplacian operator. However, the laplacian operator is not injective in the domain $\B$.
For this reason, let us define $\B_0$ as set of functions $H\in\B$ such that $H(0)=0$.
\begin{proposition} The operator
$\Delta:\B_0\to L^2[0,1]^{\perp 1}$ is injective.
\end{proposition}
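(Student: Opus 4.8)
The plan is to show that the only function $H \in \mathbb{H}^\alpha_0$ with $\Delta H = 0$ is $H \equiv 0$, since injectivity of a linear operator is equivalent to having trivial kernel. So suppose $H \in \mathbb{H}^\alpha_0$ satisfies $\Delta H = 0$ Lebesgue almost everywhere, together with $H(0) = 0$.

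First I would integrate the equation $\Delta H = 0$. Since $\partial_u H$ is absolutely continuous and its derivative vanishes almost everywhere, $\partial_u H$ must be constant on $[0,1]$; say $\partial_u H \equiv c$. Consequently $H$ is affine: $H(u) = H(0) + cu = cu$, using $H(0) = 0$. It remains to show $c = 0$.

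To pin down $c$, I would invoke the two remaining constraints on $H$. The boundary condition \eqref{boundary conditions in Hbc} reads $\partial_u H(0) = \partial_u H(1) = \alpha(H(0) - H(1))$. With $H(u) = cu$ this gives $c = \alpha(0 - c) = -\alpha c$, hence $c(1 + \alpha) = 0$. Since $\alpha > 0$, we conclude $c = 0$, and therefore $H \equiv 0$. (Alternatively, one could use the condition $\Delta H \in L^2[0,1]^{\perp 1}$, but here $\Delta H = 0$ is already in that space, so the boundary condition is the operative constraint.)

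There is no real obstacle in this argument; the proposition is essentially a bookkeeping check that the kernel of $\Delta$ on the \emph{full} domain $\mathbb{H}^\alpha$ consists exactly of the affine functions $H(u) = cu$ compatible with the boundary condition, which form a one-dimensional space, and that the extra normalization $H(0) = 0$ singling out $\mathbb{H}^\alpha_0$ removes precisely this remaining degree of freedom. The only point requiring a little care is the justification that $\partial_u H$ is constant: this is exactly where absolute continuity of $\partial_u H$ enters, allowing us to recover $\partial_u H$ as the integral of its almost-everywhere-defined derivative $\Delta H = 0$, rather than merely knowing $\Delta H = 0$ pointwise.
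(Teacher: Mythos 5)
Your proposal is correct and follows essentially the same route as the paper's proof: reduce injectivity to triviality of the kernel, use absolute continuity of $\partial_u H$ to conclude $H$ is affine, and then check that the boundary condition \eqref{boundary conditions in Hbc} together with $H(0)=0$ forces $H\equiv 0$. The only cosmetic difference is that you impose $H(0)=0$ before the boundary condition, while the paper writes $H(u)=a+bu$ and applies the two constraints in the opposite order; the computation $c(1+\alpha)=0$ is identical in both.
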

\begin{proof}
Since the operator is linear it is enough to show that its kernel reduces to the null function. For that purpose, let $H\in \B_0$ be such that $\Delta H\equiv 0$ Lebesgue almost surely.  Since $\p_uH$ is absolutely continuous, this implies that $\p_u H$ is constant.  Hence, $H(u)=a+bu$. The unique value of $b$ for which $H$  of this form satisfies \eqref{boundary conditions in Hbc} is $b=0$. Since $H(0)=0$, have that $a=0$, thus $H$ is zero.
\end{proof}

For $g\in L^2[0,1]^{\perp 1}$, define
\begin{equation*}%\label{laplinverse}
 [(-\Delta)^{-1}_\alpha g](u)\,:=\,\int_{0}^1 G_\alpha(u,r)\,g(r)\,dr\,,
\end{equation*}
where the function $G_\alpha:[0,1]\times [0,1]\to \bb R$ is given by
\begin{equation*}
 G_\alpha(u,r)\,=\,\frac{\alpha}{\alpha+1}\,u(1-r)-(u-r){\bf 1}_{\{0\leq r\leq u\leq 1\}}\,.
\end{equation*}

\begin{proposition}\label{propinvlaplaciano}
Let $g\in L^2[0,1]^{\perp 1}$. The operator  $(-\Delta)^{-1}_\alpha $ enjoys the following properties:
\begin{enumerate}
\item[(a)]  $(-\Delta)^{-1}_\alpha g\in C^1([0,1])$. Moreover, its first derivative
is absolutely continuous.

\vspace{0.3cm}

\item[(b)] %holds the boundary conditions
% \begin{equation*}
% \begin{split}
%  \partial_u [(-\Delta)^{-1}_\alpha g](0)&=\partial_u [(-\Delta)^{-1}_\alpha g](1)\\&=
% \alpha\Big\{[(-\Delta)^{-1}_\alpha g](0)-[(-\Delta)^{-1}_\alpha g](1)\Big\}\,;
% \end{split}
% \end{equation*}
\noindent
$\partial_u [(-\Delta)^{-1}_\alpha g](0)=\partial_u [(-\Delta)^{-1}_\alpha g](1)=
\alpha([(-\Delta)^{-1}_\alpha g](0)-[(-\Delta)^{-1}_\alpha g](1))$.

\vspace{0.3cm}

\item[(c)]  $(-\Delta)^{-1}_\alpha g\in \B_0$.

\vspace{0.3cm}

\item[(d)]  $(-\Delta) \big[(-\Delta)^{-1}_\alpha g\big]=g$.

\vspace{0.3cm}

\item[(e)] The operators $(-\Delta):\B_0\to L^2[0,1]^{\perp 1}$ and $(-\Delta)^{-1}_\alpha :L^2[0,1]^{\perp 1}\to \B_0$ are symmetric and non-negative.
\end{enumerate}
\end{proposition}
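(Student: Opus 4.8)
The plan is to carry out every computation directly on the explicit function $\psi := (-\Delta)^{-1}_\alpha g$. Splitting the kernel and abbreviating $A := \int_0^1 (1-r)\, g(r)\,dr$, this reads
\[
\psi(u) \;=\; \frac{\alpha}{\alpha+1}\, u\, A \;-\; \int_0^u (u-r)\, g(r)\,dr\,.
\]
Since $g$ is only assumed to lie in $L^2[0,1]^{\perp 1}$, all regularity in (a) reduces to the standard fact that $u\mapsto\int_0^u g(r)\,dr$ is absolutely continuous (in particular continuous) with a.e.\ derivative $g$. Differentiating once gives $\partial_u\psi(u)=\frac{\alpha}{\alpha+1}A-\int_0^u g(r)\,dr$, which is continuous, so $\psi\in C^1([0,1])$, and is itself absolutely continuous; this is (a). Differentiating a second time yields $\Delta\psi=-g$ a.e., which is (d) and simultaneously shows $\Delta\psi\in L^2[0,1]^{\perp 1}$.

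For (b) I would simply evaluate the four boundary quantities from these closed forms. Using $\int_0^1 g=0$ one gets $\partial_u\psi(0)=\partial_u\psi(1)=\frac{\alpha}{\alpha+1}A$, so the two derivative conditions coincide automatically; and $\psi(0)=0$, $\psi(1)=\frac{\alpha}{\alpha+1}A-A=-\frac{A}{\alpha+1}$, whence $\alpha(\psi(0)-\psi(1))=\frac{\alpha}{\alpha+1}A$, matching $\partial_u\psi(0)$. The constant $\frac{\alpha}{\alpha+1}$ in the definition of $G_\alpha$ is tuned precisely so that this identity holds, so this is the one short computation worth doing carefully. Item (c) is then pure bookkeeping: (a), (b) and (d), together with $\psi(0)=0$, are exactly the defining properties of $\B_0$.

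The substantive point is (e), which I would prove first for $-\Delta$ and then transfer to its inverse via (d). For $H,G\in\B_0$, integrating by parts twice gives
\[
\langle -\Delta H, G\rangle \;=\; \partial_u H(0)\,G(0)-\partial_u H(1)\,G(1) + \int_0^1 \partial_u H\,\partial_u G\,du\,.
\]
Because both boundary derivatives of $H$ equal $\alpha(H(0)-H(1))$, the boundary term collapses to $\alpha(H(0)-H(1))(G(0)-G(1))$, producing the manifestly symmetric form
\[
\mathcal E(H,G) \;:=\; \alpha(H(0)-H(1))(G(0)-G(1)) + \int_0^1 \partial_u H\,\partial_u G\,du\,.
\]
Symmetry of $-\Delta$ is then immediate, and non-negativity follows from $\langle -\Delta H,H\rangle=\mathcal E(H,H)=\alpha(H(0)-H(1))^2+\int_0^1(\partial_u H)^2\,du\geq 0$. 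To pass to the inverse, given $g,h\in L^2[0,1]^{\perp 1}$ I set $H=(-\Delta)^{-1}_\alpha g$ and $G=(-\Delta)^{-1}_\alpha h$; by (d) one has $\langle (-\Delta)^{-1}_\alpha g,h\rangle=\langle H,-\Delta G\rangle=\mathcal E(H,G)$, symmetric in $g,h$, and $\langle (-\Delta)^{-1}_\alpha g,g\rangle=\mathcal E(H,H)\geq 0$.

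I expect the main obstacle to be conceptual rather than computational: every step must respect that $g$ lies in $L^2[0,1]^{\perp 1}$. Indeed the kernel itself is \emph{not} symmetric — a direct check gives $G_\alpha(u,r)-G_\alpha(r,u)=\frac{r-u}{\alpha+1}$ — and the boundary identities of (b) only close because $\int_0^1 g=0$ cancels the stray terms. This is precisely why routing (e) through the integration-by-parts form $\mathcal E$, rather than attempting a direct symmetry check on $G_\alpha$, is the clean approach: the asymmetric defect of $G_\alpha$ integrates to zero against mean-zero functions, while the bilinear form $\mathcal E$ renders both symmetry and non-negativity transparent.
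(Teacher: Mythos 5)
Your proposal is correct and takes essentially the same route as the paper's own proof: explicit differentiation of the kernel representation delivers (a)--(d), and the double integration by parts combined with the Robin boundary conditions \eqref{boundary conditions in Hbc} yields the symmetric non-negative bilinear form for (e), which is then transferred to $(-\Delta)^{-1}_\alpha$ via item (d). The differences are purely expository (you evaluate the boundary quantities in (b) explicitly, derive (d) before (c), and write the boundary term as $\alpha(H(0)-H(1))(G(0)-G(1))$ where the paper writes the equivalent $\tfrac{1}{\alpha}\,\partial_u G(0)\,\partial_u H(0)$), so no changes are needed.
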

\begin{proof}
By the definition of $(-\Delta)^{-1}_\alpha$,
\begin{equation*}\label{eeq2.10}
 [(-\Delta)^{-1}_\alpha g](u)\,=\, \frac{\alpha}{\alpha+1}\,u\int_0^1(1-r)g(r)\,dr-u\int_0^u g(r)\,dr+\int_0^u r\, g(r)\,dr\,.
\end{equation*}
By differentiation, we obtain
$$\p_u[(-\Delta)^{-1}_\alpha g](u) =  \frac{\alpha}{\alpha+1}\int_0^1(1-r)g(r)\,dr-\int_0^u g(r)dr\,,$$ implying (a).
Item (b) follows from the assumption $g\in L^2[0,1]^{\perp 1}$.
Items (a) and (b) together imply (c).
By differentiating again the previous equality and recalling (c) we are lead to (d).

It remains to prove (e). Fix $G,H\in \B_0$.  Integration  by parts gives
\begin{equation*}
 \< -\Delta G,H\>\,=\,\<\partial_u G,\partial_u H\>+\partial_u G(0) H(0) -\partial_u G(1) H(1)\,.
\end{equation*}
Since  $G,H\in \B$,
these functions satisfy \eqref{boundary conditions in Hbc}. As a consequence,
\begin{equation*}%\label{equa2.11}
\< -\Delta G,H\>\,=\,\<\partial_u G,\partial_u H\>+\,\frac{1}{\alpha}\,\partial_u G(0) \partial_u H(0)\,,
\end{equation*}
which implies symmetry and non-negativity of $\Delta$. The same argument applies for
 $(-\Delta)^{-1}_\alpha $, by item (d).
\end{proof}

\begin{lemma}\label{H2bc}
 Let $\rho$ be a weak solution of \eqref{her}.
Then, for all   $H\in\B$ and for all $t\in [0,T]$ ,
\begin{equation}\label{ext}
\< \rho_t, H\> \,-\, \< \rho_0 , H\>
\,=\, \int_0^t \< \rho_s , \Delta H \>\, ds\,.
\end{equation}
\end{lemma}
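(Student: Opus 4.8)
The plan is to deduce \eqref{ext} from the definition \eqref{eqint2} of a weak solution, by specializing to time-independent test functions and then enlarging the class of admissible test functions from $\A$ to $\B$. First I would insert in \eqref{eqint2} a test function that does not depend on time, $H_s(u)\equiv G(u)$ with $G\in C^2([0,1])$; since such a $G$ belongs to $\A$ and $\partial_s H_s\equiv 0$, identity \eqref{eqint2} collapses to
\begin{equation*}
\<\rho_t,G\>-\<\rho_0,G\>-\int_0^t\<\rho_s,\Delta G\>\,ds-\int_0^t\big(\rho_s(0)\partial_u G(0)-\rho_s(1)\partial_u G(1)\big)\,ds+\int_0^t\alpha(\rho_s(0)-\rho_s(1))(G(0)-G(1))\,ds=0.
\end{equation*}
If moreover $G$ satisfies the boundary conditions \eqref{boundary conditions in Hbc}, then $\partial_u G(0)=\partial_u G(1)=\alpha(G(0)-G(1))$, so the integrand $\rho_s(0)\partial_u G(0)-\rho_s(1)\partial_u G(1)$ equals $\alpha(G(0)-G(1))(\rho_s(0)-\rho_s(1))$ and the two boundary integrals cancel exactly. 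Hence \eqref{ext} holds for every $G\in\B\cap C^2([0,1])$.

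The core difficulty is to drop the $C^2$ requirement: a general $H\in\B$ has $\partial_u H$ only absolutely continuous, so $\Delta H\in L^2[0,1]$ exists but $H$ need not be twice differentiable, and $H$ is therefore not directly admissible in \eqref{eqint2}. I would approximate $H$ by smooth functions that \emph{still satisfy \eqref{boundary conditions in Hbc} exactly}, using the inverse Laplacian of Section \ref{s3}. Since adding a constant preserves \eqref{boundary conditions in Hbc}, the function $H-H(0)$ lies in $\B_0$; putting $g:=-\Delta H\in L^2[0,1]^{\perp 1}$ and invoking injectivity of $\Delta$ on $\B_0$ together with Proposition \ref{propinvlaplaciano}(c)--(d), one obtains the representation $H-H(0)=(-\Delta)^{-1}_\alpha g$. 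I would then choose $g_n\in C^\infty([0,1])$ with $\int_0^1 g_n=0$ and $g_n\to g$ in $L^2[0,1]$ (for instance by approximating $g$ by polynomials and subtracting their mean) and set $H_n:=H(0)+(-\Delta)^{-1}_\alpha g_n$.

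From the explicit formula for $(-\Delta)^{-1}_\alpha$ and its first derivative, each $H_n$ is $C^2$ and belongs to $\B$, so it is an admissible time-independent test function and its boundary terms cancel as in the first step, giving \eqref{ext} for $H_n$. It then remains to pass to the limit. Here $\Delta H_n=-g_n\to\Delta H$ in $L^2[0,1]$, while the uniform bound on $G_\alpha$ together with Cauchy--Schwarz yields $H_n\to H$ and $\partial_u H_n\to\partial_u H$ uniformly on $[0,1]$. Using that $\rho$ is bounded and that $\int_0^t\|\rho_s\|_{L^2[0,1]}\,ds<\infty$, all three surviving terms converge, namely $\<\rho_t,H_n\>\to\<\rho_t,H\>$, $\<\rho_0,H_n\>\to\<\rho_0,H\>$, and $\int_0^t\<\rho_s,\Delta H_n\>\,ds\to\int_0^t\<\rho_s,\Delta H\>\,ds$, which delivers \eqref{ext} for arbitrary $H\in\B$.

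I expect the approximation in the second step to be the main obstacle. A naive mollification of $H$ would destroy the boundary conditions near $u=0$ and $u=1$, so the two boundary integrals would no longer cancel and one would be forced to control them in the limit through the traces $\rho_s(0),\rho_s(1)$ (which are finite because $\rho\in L^2(0,T;\mc H^1)$). Channeling the approximation through $(-\Delta)^{-1}_\alpha$ avoids this entirely, since the approximants satisfy \eqref{boundary conditions in Hbc} on the nose and only the harmless convergences above have to be verified.
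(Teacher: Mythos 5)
Your proof is correct, and it shares the paper's skeleton --- approximate $\Delta H$ in $L^2[0,1]$ by continuous zero-mean functions $g_n$, integrate twice to produce admissible time-independent $C^2$ test functions, insert them in \eqref{eqint2}, and pass to the limit --- but it differs in one genuine respect: how the boundary terms are disposed of. The paper's approximants $G_n(u)=H(0)+\partial_u H(0)\,u+\int_0^u\int_0^v g_n(r)\,dr\,dv$ reproduce the exact endpoint derivatives of $H$ but do \emph{not} satisfy \eqref{boundary conditions in Hbc} (in general $G_n(1)\neq H(1)$), so for each $n$ the two boundary integrals survive in the weak formulation; they cancel only after $n\to\infty$, using the absolute continuity of $\partial_u H$ and the Fundamental Theorem of Calculus to identify $\lim_n G_n(1)=H(1)$, and then invoking \eqref{boundary conditions in Hbc} for $H$ itself. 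You instead route the approximation through the inverse Laplacian: the representation $H-H(0)=(-\Delta)^{-1}_\alpha(-\Delta H)$, which you derive correctly from injectivity of $\Delta$ on $\B_0$ and items (c)--(d) of Proposition \ref{propinvlaplaciano}, lets you set $H_n=H(0)+(-\Delta)^{-1}_\alpha g_n$, and by item (b) these satisfy \eqref{boundary conditions in Hbc} \emph{exactly}, so the boundary integrals cancel identically at every finite $n$ and only the three interior terms need limits --- for which $\Vert H_n-H\Vert_\infty\leq C_\alpha\Vert g_n-g\Vert_{L^2[0,1]}$ (via the bounded kernel $G_\alpha$) and $\Delta H_n=-g_n\to\Delta H$ in $L^2[0,1]$ suffice, since $\rho\in L^2(0,T;\mc H^1)$ gives $\int_0^t\Vert\rho_s\Vert_{L^2[0,1]}\,ds<\infty$. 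In fact the two constructions differ only in the slope of the linear part: your slope $\tfrac{\alpha}{\alpha+1}\int_0^1(1-r)g_n(r)\,dr$ converges to $\partial_u H(0)$, which is the value the paper fixes from the start. The trade-off is clear: your route leans on the Section \ref{s3} machinery (available in this paper anyway) and buys exact boundary-condition-preserving approximants, avoiding both the surviving boundary integrals and the FTC computation; the paper's route is more bare-hands, needing only the double-primitive construction and the regularity of $\partial_u H$. Both arguments are complete; yours has the cleaner bookkeeping at the boundary.
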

\begin{proof}
Fix $H\in \B$.
Let $\{g_n\}_{n\in\bb N}\subset C([0,1])$ be a sequence of functions converging to $\Delta H$ in $L^2[0,1]$
such that $\int_0^1 g_n(u)\,du=0$ for all $n\in\bb N$. Notice that this is possible because $\Delta H$ has zero mean.

Define
 \begin{equation*}
 G_n(u):=H(0)+\partial_uH(0)\, u+ \int_0^u  \int_0^v g_n(r) \, dr\,dv\,.
\end{equation*}
Since $g_n$ has zero mean, then $\partial_uG_n(0)=\partial_uG_n(1)=\p_u H(0)=\p_u H(1)$.
Besides that, $G_n(0)=H(0)$.
It is easy to verify that $G_n\in \A$, see the Definition \ref{space C^n,m}.
Since $\rho(\cdot)$ is a weak solution of equation \eqref{her} and $G_n\in \A$, then we get that
\begin{equation}\label{g_n}
\begin{split}
\< \rho_t, G_n\> \,-\, \< \rho_0, G_n\>
\,=&\, \int_0^t \< \rho_s , g_n \>\, ds+\int_0^t(\rho_s(0)-\rho_s(1))\partial_uH(1)\,ds\\
+&\int_0^t\alpha(\rho_s(0)-\rho_s(1))(H(0)-G_n(1)))\,ds\,.
\end{split}
\end{equation}
We want to take the limit $n\to\infty$ in the previous equation. To this end, notice that $G_n(1)$ converges to
\begin{equation*}%\label{H(1)}
 H(0)+\partial_u H(0)+ \int_0^1  \int_0^v \Delta H(r) \, dr\,dv\,.
\end{equation*}
Since $\Delta H$ is absolutely continuous, the Fundamental Theorem of Calculus can be applied twice, showing that the previous expression is equal to $H(1)$.
Therefore, taking the limit $n\to \infty$ in \eqref{g_n}, we obtain that
\begin{equation*}
\begin{split}
\< \rho_t, H\> \,-\, \< \rho_0, H\>
\,=&\, \int_0^t \< \rho_s ,\Delta H\>\, ds+\int_0^t(\rho_s(0)-\rho_s(1))\partial_uH(1)\,ds\\
-&\int_0^t\alpha(\rho_s(0)-\rho_s(1))(H(0)-H(1))\,ds\,.
\end{split}
\end{equation*}
By \eqref{boundary conditions in Hbc} the last two integral terms on the right hand side of the previous expression cancel, which ends the proof.
\end{proof}

\begin{proposition}\label{prop243}
 Let $\rho$   be a weak solution of \eqref{her}  with $\rho_0(\cdot)\equiv{0}$.
Then, for all $t\in[0,T]$, it holds that
\begin{equation}\label{e2.9}
\big\< \rho_t,(-\Delta)^{-1}_\alpha\rho_t\big\>=-2\int_0^t\<\rho_s,\rho_s\>\,ds\,.\\
\end{equation}
In particular, since equation \eqref{her} is linear,
there exists at most one weak solution with initial condition $\rho_0(\cdot)$.
\end{proposition}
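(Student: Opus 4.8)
The plan is to test the weak formulation against time-frozen potentials built from the inverse Laplacian, recover the factor $2$ through a time-discretization, and use non-negativity of $(-\Delta)^{-1}_\alpha$ only at the very end. First I would record that $\rho_t$ has zero mean for every $t$: taking the test function $H\equiv 1$ in \eqref{eqint2} (or in Lemma \ref{H2bc}), all boundary contributions vanish and $\Delta H\equiv 0$, so $\<\rho_t,1\>=\<\rho_0,1\>=0$. Hence $\rho_t\in L^2[0,1]^{\perp 1}$ and, for each fixed $r$, the potential $(-\Delta)^{-1}_\alpha\rho_r$ is well defined and lies in $\B_0\subset\B$ by Proposition \ref{propinvlaplaciano}(c). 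Applying Lemma \ref{H2bc} with the time-independent test function $H=(-\Delta)^{-1}_\alpha\rho_r$, and using $\Delta(-\Delta)^{-1}_\alpha\rho_r=-\rho_r$ from Proposition \ref{propinvlaplaciano}(d) together with $\rho_0\equiv 0$, I obtain the bilinear identity
\begin{equation*}
\big\<\rho_t,(-\Delta)^{-1}_\alpha\rho_r\big\>=-\int_0^t\<\rho_s,\rho_r\>\,ds\,,
\end{equation*}
valid for all $t$ and the fixed $r$.

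Next I would discretize time. Fix a partition $0=t_0<\cdots<t_N=t$ and telescope $\big\<\rho_t,(-\Delta)^{-1}_\alpha\rho_t\big\>$, the value at $t_0$ being zero. Writing $B(f,g)=\<f,(-\Delta)^{-1}_\alpha g\>$, which is symmetric by Proposition \ref{propinvlaplaciano}(e), and $\delta_i=\rho_{t_{i+1}}-\rho_{t_i}$, each increment splits as $B(\rho_{t_{i+1}},\rho_{t_{i+1}})-B(\rho_{t_i},\rho_{t_i})=2B(\rho_{t_i},\delta_i)+B(\delta_i,\delta_i)$. By symmetry and the bilinear identity one has $2B(\rho_{t_i},\delta_i)=-2\int_{t_i}^{t_{i+1}}\<\rho_s,\rho_{t_i}\>\,ds$, and applying the same identity at both endpoints gives $B(\delta_i,\delta_i)=-\int_{t_i}^{t_{i+1}}\<\rho_s,\delta_i\>\,ds$. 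Summing over $i$ yields the exact relation
\begin{equation*}
\big\<\rho_t,(-\Delta)^{-1}_\alpha\rho_t\big\>=-2\sum_i\int_{t_i}^{t_{i+1}}\<\rho_s,\rho_{t_i}\>\,ds+\sum_i B(\delta_i,\delta_i)\,.
\end{equation*}

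I would then send the mesh to $0$. The first sum is a Riemann-type approximation of $-2\int_0^t\<\rho_s,\rho_s\>\,ds$, while the error $\sum_i B(\delta_i,\delta_i)$ should vanish: each term is non-negative by Proposition \ref{propinvlaplaciano}(e), and rewriting it via the endpoint identities exhibits $\sum_i B(\delta_i,\delta_i)$ as the difference of two Riemann sums of $\<\rho_s,\rho_s\>$, both converging to $\int_0^t\<\rho_s,\rho_s\>\,ds$, so the error tends to $0$. This passage to the limit is the delicate point and the step I expect to be the main obstacle: it requires continuity of $t\mapsto\rho_t$ in $L^2[0,1]$, whereas the definition of weak solution only provides $\rho\in L^2(0,T;\mc H^1)$. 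I would extract this continuity from the weak formulation, identifying $\rho$ with a representative in $C([0,T];L^2[0,1])$ (since $\partial_t\rho=\Delta\rho$ takes values in the dual of $\mc H^1$); alternatively one mollifies $\rho$ in time, differentiates the resulting smooth curve legitimately, and removes the mollification. Once this is justified, identity \eqref{e2.9} follows.

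Finally, uniqueness is immediate. The left-hand side of \eqref{e2.9} equals $B(\rho_t,\rho_t)\ge 0$ by the non-negativity in Proposition \ref{propinvlaplaciano}(e), while the right-hand side $-2\int_0^t\<\rho_s,\rho_s\>\,ds$ is non-positive; hence both vanish for every $t\in[0,T]$, forcing $\int_0^t\<\rho_s,\rho_s\>\,ds=0$ and therefore $\rho\equiv 0$ in $L^2([0,T]\times[0,1])$. Since \eqref{her} is linear, the difference of two weak solutions sharing the same initial datum is a weak solution with datum $0$, so it vanishes, giving uniqueness.
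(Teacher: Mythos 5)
Your proposal is correct in outline and shares the paper's skeleton exactly: zero mean via $H\equiv 1$, the bilinear identity obtained from Lemma \ref{H2bc} applied to $H=(-\Delta)^{-1}_\alpha\rho_r$ (using items (c), (d) of Proposition \ref{propinvlaplaciano}), a telescoping sum over a partition of $[0,t]$, and a sign argument at the end. Your regrouping of each increment as $2B(\rho_{t_i},\delta_i)+B(\delta_i,\delta_i)$ is an algebraically equivalent repackaging of the paper's two cross-term sums in \eqref{eq1}, and your endpoint identities reproduce \eqref{eq2.10}. Where you genuinely diverge is at the crux you yourself flag: making the error $\sum_i B(\delta_i,\delta_i)=-\sum_i\int_{t_i}^{t_{i+1}}\<\rho_s,\rho_{t_{i+1}}-\rho_{t_i}\>\,ds$ vanish. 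You propose to import strong continuity, $\rho\in C([0,T];L^2[0,1])$, from the abstract parabolic theory (via $\partial_t\rho\in L^2(0,T;(\mc H^1)')$ and the Lions--Temam lemma) or from time-mollification, but you leave this as a sketch. Carrying it out is standard yet not free: one needs the one-dimensional trace bound $|\rho_s(0)|+|\rho_s(1)|\le C\Vert\rho_s\Vert_{\mc H^1}$ to put the Robin boundary term of \eqref{eqintbyparts} into the dual norm, and a further (easy) step identifying the continuous representative with the given $\rho_t$ at \emph{every} $t$ — the proposition asserts \eqref{e2.9} for all $t$, not a.e.\ — which follows since \eqref{eqint2} already makes $t\mapsto\<\rho_t,H\>$ continuous for smooth $H$. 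The paper instead stays self-contained and never invokes time-regularity theory: it mollifies in \emph{space}, setting $\rho^\delta_s=(\rho_s*\iota_\delta)\Phi_\delta\in\B_0$, and applies Lemma \ref{H2bc} a second time to write $\<\rho^\delta_s,\rho_s-\rho_{t_k}\>=\int_{t_k}^s\<\rho_r,\Delta\rho^\delta_s\>\,dr$, which is $O(t_{k+1}-t_k)$ with a constant depending on $\delta$; the weak formulation itself thus supplies the Lipschitz-in-time control against smooth test functions. Your route buys a cleaner error term (each $B(\delta_i,\delta_i)\ge 0$, and with $C([0,T];L^2)$ in hand one could even dispense with the discretization entirely), while the paper's buys elementarity, at the cost of the $\eps$-$\delta$ bookkeeping with $\rho^\delta$.

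On uniqueness, your sign argument (left side of \eqref{e2.9} nonnegative by item (e), right side nonpositive, hence both vanish) is simpler than the paper's detour through Gronwall, but as written it only yields $\rho=0$ a.e.\ in $[0,T]\times[0,1]$. To conclude $\rho_t=0$ in $L^2[0,1]$ for \emph{every} $t$, add the paper's final observation that $B$ is positive definite on $L^2[0,1]^{\perp 1}$: write $\rho_t=(-\Delta)f_t$ with $f_t\in\B_0$ via item (d), so that $B(\rho_t,\rho_t)=\<\partial_u f_t,\partial_u f_t\>=0$ forces $\rho_t\equiv 0$.
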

\begin{proof}

We  first claim that $\<\rho_t,1\>=0$ for any time $t\in [0,T]$, if $\rho_0(\cdot)\equiv{0}$. This is a consequence of taking a function $H\equiv 1$ in the integral equation \eqref{eqint2}. Since $\rho$ is bounded, we have also that $\rho\in L^2[0,1]^{\perp 1}$. Or else, the function $\rho$ is in the domain of the operator $(-\Delta)^{-1}_\alpha$.

Take a partition $0=t_0<t_1<\cdots<t_n=t$ of the interval $[0,t]$. Writing a telescopic sum, we get to
\begin{equation*}
\begin{split}
\<\rho_t,(-\Delta)^{-1}_\alpha\rho_t\>-\<\rho_0,(-\Delta)^{-1}_\alpha\rho_0\> =&  \sum_{k=0}^{n-1} \< \rho_{t_{k+1}},(-\Delta)^{-1}_\alpha\rho_{t_{k+1}}\>-\< \rho_{t_k},(-\Delta)^{-1}_\alpha\rho_{t_k}\>\,.
\end{split}
\end{equation*}
 By summing and subtracting the term $\< \rho_{t_{k+1}},(-\Delta)^{-1}_\alpha\rho_{t_k}\>$ for each $k$, the right hand side of the previous expression can be rewritten as
\begin{equation}\label{eq1}
\begin{split}
&\sum_{k=0}^{n-1} \< \rho_{t_{k+1}},(-\Delta)^{-1}_\alpha\rho_{t_{k+1}}\>-\< \rho_{t_{k+1}},(-\Delta)^{-1}_\alpha\rho_{t_k}\>\\
+&\sum_{k=0}^{n-1}\< \rho_{t_{k+1}},(-\Delta)^{-1}_\alpha\rho_{t_k}\>-\<\rho_{t_k},(-\Delta)^{-1}_\alpha\rho_{t_k}\>\,.
\end{split}
\end{equation}
We begin by estimating the second sum above. The first one can be estimated in a similar way because  $(-\Delta)^{-1}_\alpha $ is a symmetric operator.

From   item (c) of Proposition \ref{propinvlaplaciano} and Lemma \ref{H2bc} we get that
\begin{equation}\label{eq2.10}
\begin{split}
 &\< \rho_{t_{k+1}},(-\Delta)^{-1}_\alpha\rho_{t_{k}}\>-\< \rho_{t_{k}},(-\Delta)^{-1}_\alpha\rho_{t_k}\>= -\!\int_{t_k}^{t_{k+1}}\!\! \<\rho_s,\rho_s\>\,ds+\!\int_{t_k}^{t_{k+1}}\!\!\<\rho_s,\rho_s-\rho_{t_k}\>\,ds.\\
\end{split}
\end{equation}
The sum over $k$ of the first integral on the right side of last equality is exactly $-\int_{0}^t\<\rho_s,\rho_s\>ds$.

 We claim now that the sum in $k$ of  the last integral on the right hand of the expression above goes to zero as the mesh of the partition goes to zero.
 To this end, we approximate $\rho$ by a smooth function vanishing simultaneously in a neighborhood of $0$ and $1$.

Let $\iota_\delta:\bb R\to \bb R$ be a smooth approximation of the identity. We extend $\rho_s(\cdot)$ as being  zero outside of the interval $[0,1]$. It is classical that
the convolution $\rho_s*\iota_\delta$ is smooth and converges to $\rho_s(\cdot)$ in $L^2[0,1]$ as $\delta\to 0$.
Let $\Phi_\delta:[0,1]\to \bb R$ a smooth function bounded by one,
equal to zero in  $[0,\delta)\cup (1-\delta, 1]$ and equal to one in $(2\delta,1-2\delta)$.
Define  $$\rho^\delta_s(u)\,=\,(\rho_s*\iota_\delta)(u)\,\Phi_\delta(u)\,.$$
Then, $\rho^\delta_s(\cdot)$ converges to $\rho_s(\cdot)$ in $L^2[0,1]$.  Furthermore, since $\rho^\delta_s(\cdot)$ is smooth and vanishes near $0$ and $1$, it is simple to verify that $\rho^\delta_s\in \B_0$.

Adding and subtracting $\rho^\delta$,  the second integral on the right hand side of equality \eqref{eq2.10} can be written as
\begin{equation*}
\int_{t_k}^{t_{k+1}}\<\rho_s-\rho^\delta_s,\rho_s-\rho_{t_k}\>\,ds+ \int_{t_k}^{t_{k+1}}\<\rho^\delta_s,\rho_s-\rho_{t_k}\>\,ds\,.
\end{equation*}
 Fix $\eps>0$. Since $\rho^\delta$ approximates $\rho$, the Dominated Convergence Theorem gives us that the absolute value of the sum in $k$ of the first integral in the expression above is bounded in modulus by $\eps$ for some $\delta(\eps)$ small.
Take  now $\delta=\delta(\eps)$.
Since $\rho^\delta_s\in \B_0$, applying Lemma \ref{H2bc} we get that the second integral above is equal to
\begin{equation*}
 \int_{t_k}^{t_{k+1}}\int_{t_k}^s \<\rho_r,\Delta \rho^\delta_s\>\,dr\,ds\,,
\end{equation*}
whose absolute value is bounded from above by $C(\rho,\delta)(t_{k+1}-t_k)^2$. This is enough to conclude the proof of \eqref{e2.9}.

Let us  prove now the uniqueness of weak solutions. We notice that as above, we take $\rho_0(\cdot)\equiv{0}$ and therefore we want to prove that $\rho_t(\cdot)\equiv{0}$. Since $\rho_t\in{\bb{L}^2[0,1]^{\perp}}$,  by item (e) of Proposition \ref{propinvlaplaciano}, we have that $\< \rho_t,(-\Delta)^{-1}_\alpha\rho_t\>\geq 0$, for all $t\in[0,T]$.
From \eqref{e2.9} and Gronwall's inequality, we conclude that $\< \rho_t,(-\Delta)^{-1}_\alpha\rho_t\>= 0$, for all $t\in[0,T]$.
From item (d), fixed  $t\in[0,T]$, there exists $f_t\in\B_0$ such that $\rho_t= (-\Delta)f_t$. Hence,
\begin{equation*}
\< \rho_t,(-\Delta)^{-1}_\alpha\rho_t\>=\< -\Delta f_t,f_t\>\,=\,\<\partial_u f_t,\partial_u f_t\> \,.
\end{equation*}
Thus, for all $t\in[0,T]$, $\partial_u f_t(\cdot)=0$ Lebesgue almost surely.
Coming back to $\rho_t= (-\Delta)f_t$ we get that $\rho(\cdot)$ is equal to zero.
This concludes the proof.

\end{proof}

\section{Hydrodynamics and energy estimates}\label{s4}

In this section we introduce a  particle system whose scaling limits are driven by the partial differential equations introduced above.
We first describe the model, then we state the hydrodynamics result and finally we obtain energy estimates which are crucial for the proof of
Proposition \ref{uniq_sobolev}.

\subsection{Symmetric slowed exclusion}

\quad
\vspace{0.2cm}

The symmetric exclusion process with a slow bond is a Markov process  $\{\eta_t:\, t\geq{0}\}$ evolving on $\Omega:=\{0,1\}^{\bb T_n}$, where $\bb T_n=\bb Z/n\bb Z$ is the one-dimensional discrete torus with $n$ points. It is characterized via its infinitesimal generator $\mathcal{L}_{n}$ which  acts on functions $f:\Omega\rightarrow \bb{R}$ as
\begin{equation*}
\mathcal{L}_{n}f(\eta)=\sum_{x\in \bb T_n}\,\xi^{n}_{x,x+1}\,\big[f(\eta^{x,x+1})-f(\eta)\big]\,,
\end{equation*}
  being the rates given by \begin{equation*}
\xi^{n}_{x,x+1}\;=\;\left\{\begin{array}{cl}
\alpha n^{-\beta}, &  \mbox{if}\,\,\,\,x=-1\,,\\
1, &\mbox{otherwise\,,}
\end{array}
\right.
\end{equation*}
and $\eta^{x,x+1}$ is the configuration obtained from $\eta$ by exchanging the variables $\eta(x)$ and $\eta(x+1)$, namely
\begin{equation*}
\eta^{x,x+1}(y)=\left\{\begin{array}{cl}
\eta(x+1),& \mbox{if}\,\,\, y=x\,,\\
\eta(x),& \mbox{if} \,\,\,y=x+1\,,\\
\eta(y),& \mbox{otherwise}\,.
\end{array}
\right.
\end{equation*}

% Now, fix $\alpha>0$, $\beta\in{[0,\infty]}$  and $x\in \bb T_n$.
% In order to defined the symmetric exclusion with a slow bond at $\{-1,0\}$, we assume that  $\xi^{n}_{x,x+1}=\xi^{n, \beta, \alpha}_{x,x+1}$ is given by:
%

The dynamics of this process can be informally described as follows. At each bond $\{x,x+1\}$, there is an exponential clock
of parameter $\xi^{n}_{x,x+1}$. When this clock rings, the value of $\eta$ at the vertices of this bond are exchanged. This means that particles can cross all the bonds at rate $1$, except the bond $\{-1,0\}$, whose dynamics is slowed down as $\alpha n^{-\beta}$, with  $\alpha>0$ and $\beta\in{[0,\infty]}$.  It is understood here that $n^{-\infty}=0$ and $\infty\cdot 0=0$.

 It is well known that the Bernoulli product measures on $\Omega$ with parameter $\gamma\in{[0,1]}$, denoted by
$\{\nu^n_\gamma : 0\le \gamma \le 1\}$, are invariant for the dynamics   introduced above.  This means that if $\eta_0$ is distributed according to $\nu^n_\gamma$, then $\eta_t$ is also distributed according to $\nu^n_\gamma$ for any $t>0$. Moreover,  the measures $\{\nu^n_\gamma : 0\le \gamma \le 1\}$ are also
reversible.

In order to keep notation simple, we write $\eta_t:=\eta_{tn^2}$ so that $\{\eta_t: t\ge 0\}$ turns out to be the Markov process on $\Omega$
associated to the generator $\mathcal{L}_n$ speeded up by
$n^2$. We notice that we do not index the process neither in $\beta$ nor in $\alpha$.

 The trajectories of $\{\eta_t : t\ge 0\}$ live on the space $\mc D(\bb R_+, \Omega)$, i.e., the path space of
c\`adl\`ag trajectories with values in $\Omega$. For a
measure $\mu_n$ on $\Omega$, we denote by $\bb P^{\alpha,\beta}_{\mu_n}$ the
probability measure on $\mc D(\bb R_+, \Omega)$ induced by $\mu_n$ and $\{\eta_t : t\ge 0\}$ and we denote by  $\bb E_{\mu_n}^{\alpha,\beta}$
expectation with respect to $\bb P^{\alpha,\beta}_{\mu_n}$.

\subsection{Hydrodynamical phase transition}\label{hid}
\quad
\vspace{0.2cm}

{In order to state the hydrodynamical limit  we introduce the empirical measure process as follows.
We denote by $\mc M$ the space of positive measures on $\bb T$ with total
mass bounded by one, endowed with the weak topology. For $\eta\in{\Omega}$, let
$\pi^{n}(\eta, \cdot) \in \mc M$ be given by:
\begin{equation*}%\label{f01}
\pi^{n}(\eta,du) \;=\; \pfrac{1}{n} \sum _{x\in \bb T_n} \eta (x)\,
\delta_{x/n}(du)\,,
\end{equation*}
where $\delta_y$ is the Dirac measure concentrated on $y\in \bb T$. For $t\in{[0,T]}$, let $\pi^{n}_t(\eta,du):=\pi^n(\eta_t,du)$.}

For a test function $H:\bb T \to \bb R$ we use the following notation
\begin{equation*}
\<\pi^n_t, H\>:=\int H(u)\pi_t^n(\eta,du)\;=\; \pfrac 1n \sum_{x\in\bb T_n}
H (\pfrac{x}{n})\, \eta_t(x)\,.
\end{equation*}
We use this notation since for $\pi_t$ absolutely continuous with respect to the Lebesgue measure with density
$\rho_t$, we write $\<\rho_t, H\>$
for $\<\pi_t, H\>$.  % This notation should not be mixed with the inner product in $L^2(\bb T)$ or $L^2[0,1]$.

{Fix $T>0$. Let $\mc D([0,T], \mc M)$ be the space of
c\`adl\`ag trajectories with values in  $\mc M$ and endowed with the
\emph{Skorohod} topology.  For each probability measure $\mu_n$ on
$\Omega$, denote by $\bb Q^{\alpha,\beta}_{n,\mu_n}$ the measure on
the path space $\mc D([0,T], \mc M)$ induced by $\mu_n$ and
the empirical process $\pi^n_t$ introduced above.}

In order to state our first result related to the hydrodynamics of this model, we need to impose some conditions on the initial distribution of the process.

\begin{definition} \label{def associated measures}
A sequence of probability measures $\{\mu_n\}_{n\in\bb N}$ on $\Omega$ is
said to be associated to a profile $\rho_0 :\bb T \to [0,1]$ if, for every $\delta>0$ and every $H\in C(\bb T)$,
\begin{equation}\label{associated}
\lim_{n\to\infty}
\mu_n \Big[ \eta:\, \Big\vert \pfrac 1n \sum_{x\in\bb T_n} H(\pfrac{x}{n})\, \eta(x)
- \int_{\bb{T}} H(u)\, \rho_0(u) du \Big\vert > \delta \Big]\;=\; 0\,.
\end{equation}
\end{definition}

Now, we state the dynamical phase transition at the hydrodynamics level for the slowed exclusion process introduce above. We notice that this result is an improvement of the main theorem of \cite{fgn}, { since we are able to identify the hydrodynamic equation for $\beta=1$ as being the heat equation with Robin's boundary conditions as given in \eqref{her}.}

\begin{theorem} \label{th:hlrm}
Fix $\beta\in [0,\infty]$ and $\rho_0: \mathbb{T} \to [0,1]$ continuous by parts. Let $\{\mu_n\}_{n\in\bb N}$ be
a sequence of probability measures  on $\Omega$ associated to $\rho_0(\cdot)$. Then, for any $t\in [0,T]$, for every $\delta>0$ and every $H\in C(\mathbb{T})$:
\begin{equation*}
\lim_{n\to\infty}
\mathbb{P}_{\mu_n}^{\alpha,\beta} \Big[\eta_. : \, \Big\vert \pfrac{1}{n} \sum_{x\in\mathbb{T}_n}
H\big(\pfrac{x}{n}\big)\, \eta_t(x) - \int_{\bb T}H(u)\rho(t,u)du \Big\vert
> \delta \Big] \;=\; 0\,,
\end{equation*}
 where:
\begin{itemize}
\item
if $\beta\in[0,1)$, $\rho(t,\cdot)$ is the unique weak solution of \eqref{he};

\vspace{0.1cm}

\item
if $\beta=1$, $\rho(t,\cdot)$ is the unique weak solution of \eqref{her};

\vspace{0.1cm}

\item
 if $\beta\in(1,\infty]$, $\rho(t,\cdot)$ is the unique weak solution of  \eqref{hen}.
\end{itemize}

\end{theorem}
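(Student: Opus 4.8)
The plan is to prove this hydrodynamic limit by the entropy method, in the spirit of \cite{kl} and of the companion paper \cite{fgn}, reducing the statement to three ingredients: tightness of the laws of the empirical measure, identification of all limit points as weak solutions of the corresponding equation, and uniqueness of those weak solutions. A simplifying feature here is that, for the symmetric exclusion process, the instantaneous current across a bond is the gradient $\eta_s(x)-\eta_s(x+1)$; hence the empirical density closes on itself, the limiting equation is linear, and one is spared the nonlinear closure normally supplied by the one- and two-block estimates. All the difficulty is thereby concentrated at the slow bond $\{-1,0\}$, whose macroscopic location is the point where the continuous torus $\mathbb{T}=[0,1]$ is glued, i.e. the pair of endpoints $0$ and $1$.

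First I would establish tightness of the sequence $\{\mathbb{Q}^{\alpha,\beta}_{n,\mu_n}\}$ on $\mathcal{D}([0,T],\mathcal{M})$. Since $\mathcal{M}$ is compact in the weak topology, it suffices to control the time oscillations, which I would do through Dynkin's formula: for a test function $H$ smooth on $[0,T]\times(\mathbb{T}\setminus\{0\})$, the process
\begin{equation*}
M^{n,H}_t=\langle\pi^n_t,H_t\rangle-\langle\pi^n_0,H_0\rangle-\int_0^t\big(\partial_s+n^2\mathcal{L}_n\big)\langle\pi^n_s,H_s\rangle\,ds
\end{equation*}
is a martingale, and a direct computation of $n^2\mathcal{L}_n\langle\pi^n,H\rangle$ produces, after summation by parts, a discrete Laplacian acting on $H$ of order one plus boundary terms at the two sides of the cut, while the predictable quadratic variation of $M^{n,H}$ is $O(1/n)$. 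Aldous' criterion together with these $L^2$ bounds then yields tightness.

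Next I would identify the limit points. Fix a subsequential limit $\mathbb{Q}^*$. Because the number of particles is conserved and the exclusion rule caps the occupation by one, $\mathbb{Q}^*$ is concentrated on trajectories of the form $\pi_t(du)=\rho(t,u)\,du$ with $\rho$ valued in $[0,1]$; the associated-measures hypothesis fixes the initial datum $\rho_0$. Passing to the limit in Dynkin's formula, the bulk term $\tfrac1n\sum_x\Delta_nH(\tfrac{x}{n})\eta_s(x)$ converges to $\langle\rho_s,\Delta H_s\rangle$, so the task is to show that the slow-bond correction converges to the boundary terms of \eqref{eqint1}, \eqref{eqint2} or \eqref{eqint3}. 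The rate $\alpha n^{-\beta}$ scales the coupling across the cut like $n^{1-\beta}$ times the discrete current $\eta_s(-1)-\eta_s(0)$ there: for $\beta<1$ it diverges and forces the two sides to agree, giving the periodic solution of \eqref{he}; for $\beta>1$ it vanishes and decouples them, giving the Neumann solution of \eqref{hen}; and for $\beta=1$ it survives as the finite Robin coupling $\alpha(\rho_s(0)-\rho_s(1))$ of \eqref{her}.

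The main obstacle, and the heart of the argument, is the boundary replacement: making rigorous the passage from the microscopic occupations $\eta_s(-1),\eta_s(0)$ adjacent to the slow bond to the traces $\rho_s(1),\rho_s(0)$ of the limit density. This requires an energy estimate showing that any limit density belongs to $L^2(0,T;\mathcal{H}^1)$ — precisely the Sobolev bound recorded in Proposition \ref{uniq_sobolev} — so that the boundary values are well defined; I would derive it from the Dirichlet form together with the entropy of $\mu_n$ relative to the reversible product measures $\nu^n_\gamma$, combined with a replacement lemma estimating the time average of local occupations near the cut by the empirical density on small macroscopic boxes. Once the limit is characterized as a weak solution, uniqueness closes the proof: for $\beta=1$ it is Proposition \ref{uniq_sobolev}, while for $\beta<1$ and $\beta>1$ uniqueness of the periodic and Neumann heat equations is classical and can also be obtained by the energy argument of Section \ref{s3}. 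Uniqueness forces every subsequential limit to coincide with the single deterministic trajectory $\rho$, which upgrades convergence in distribution to the convergence in probability asserted in the theorem.
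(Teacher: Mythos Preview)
Your proposal is correct and follows essentially the same route as the paper: tightness via Dynkin's martingale and Aldous' criterion, characterization of limit points by passing to the limit in the generator computation with a replacement lemma handling the occupations adjacent to the slow bond, the energy estimate giving the $L^2(0,T;\mathcal H^1)$ regularity needed for the boundary traces, and uniqueness to conclude. The only organizational difference is that the paper delegates the cases $\beta\in[0,1)$ and $\beta\in(1,\infty]$ entirely to \cite{fgn} and writes out in detail only the new characterization for $\beta=1$, whereas you sketch all three regimes uniformly.
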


\begin{proof}
The proof of last result is given in \cite{fgn} for $\beta\in{[0,1)}$ and $\beta\in(1,\infty)$. We also notice that for $\beta=\infty$, the same arguments as used in \cite{fgn} for $\beta\in(1,\infty)$ fit the case $\beta=\infty$ and for that reason we also omit the proof in this case.

Finally, for $\beta=1$, the proof of the hydrodynamic limit can be almost all adapted from the strategy of \cite{fgn} and is the usual in stochastic process: tightness, which means relative compactness, plus uniqueness of limit points. We recall that the proof of tightness is very similar to the one given in \cite{fgn} and for that reason we omitted it. Nevertheless, the characterization of limit points is essentially different from \cite{fgn},  since here we identify the solutions as weak solutions of the heat equation with Robin's boundary conditions given in \eqref{her}. We proceed by presenting the proof of last statement.

Recall the definition of $\{\bb Q^{\alpha,\beta}_{n,\mu_n}\}_{n\in \bb N}$. In order to keep notation simple and since $\beta=1$ we do not index these measures nor $\mathbb{P}_{\mu_n}^{\alpha, \beta}$ on $\beta$. Let $\bb Q_*^{\alpha}$ be a limit point of $\{\bb Q^{\alpha}_{n,\mu_n}\}_{n\in \bb N}$ whose existence is a consequence of Proposition 4.1 of \cite{fgn} and assume, without loss of generality, that $\{\bb Q^{\alpha}_{n,\mu_n}\}_{n\in \bb N}$
converges to $\bb Q_*^{\alpha}$, as $n\to \infty$. Now, we prove that  $\bb Q_*^{\alpha}$ is concentrated on trajectories of measures absolutely
continuous with respect to the Lebesgue measure: $\pi(t,du) = \rho(t,u) du$, whose density
$\rho(t,u)$ is  the unique  weak solution of \eqref{her}.

At first we notice that by Proposition 5.6 of \cite{fgn}, $\bb Q_*^{\alpha}$ is concentrated on trajectories absolutely continuous with
respect to the Lebesgue measure $\pi_t(du)=\rho(t,u)\,du$ such that,
$\rho(t,\cdot)$ belongs to $L^2(0,T;\mc H^1)$.
It is well known that the Sobolev space $\mc H^1$ has
special properties: all its elements are
 absolutely continuous functions  with bounded variation, see \cite{e},
therefore with  well defined lateral limits. Such property is inherited by
 $L^2\big(0,T;\mc H^1)$ in the sense that we can
integrate in time the lateral limits.

Let $H\in\A$. We begin by claiming  that
\begin{equation*}%\label{Q}
\begin{split}
\bb Q^\alpha_* \Bigg[\pi_\cdot:\,  \<\rho_t, & H_t \> -  \<\rho_0, H_0 \> -\int_0^t
\big\<\rho_s , \partial_sH_s +\Delta H_s\big\> \,ds \\
&-\,\int_0^t\Big(\rho_s(0)\partial_u H_s(0)-\rho_s(1)\partial_u H_s(1) \Big)\,ds \\
&+\,\int_0^t\alpha\Big(\rho_s(0)-\rho_s(1)\Big)\Big(H_s(0)-H_s(1)\Big)\,ds \,=\,0,\quad\forall t\in[0,T]\, \Bigg]\,=\,1\,.
\end{split}
\end{equation*}
In order to prove last equality, its enough to show that, for every $\delta >0$,
\begin{equation*}
\begin{split}
\bb Q^\alpha_* \Bigg[\pi_\cdot:\sup_{0\leq t\leq T}\,\Bigg\vert\,   \<\rho_t,& H_t \> \,-\,  \<\rho_0, H_0 \> \,-\, \int_0^t  \,
\big\<\rho_s , \partial_sH_s +\Delta H_s\big\> \,ds \\
&-\,\int_0^t\Big(\rho_s(0)\partial_u H_s(0)-\rho_s(1)\partial_u H_s(1) \Big)\,ds \\
&+\,\int_0^t\alpha\Big(\rho_s(0)-\rho_s(1)\Big)\Big(H_s(0)-H_s(1)\Big)\,ds \,\Bigg\vert\,>\,\delta\, \Bigg]=0\,.
\end{split}
\end{equation*}
Since the boundary integrals are not well-defined in $\mc D\big([0,T],\mc M\big)$, we cannot use
directly Portmanteau's Theorem. To avoid this technical obstacle, fix $\eps>0$ and let $\iota_\eps(u)=\pfrac{1}{\eps}\,\textbf 1_{(0,\eps)}(u)$ and $\tilde \iota_\eps(u)=\pfrac{1}{\eps}\,\textbf 1_{(1-\eps,1)}(u)$
be approximations of the identity in the continuous torus.
Now, adding and subtracting the convolution of $\rho(t,u)$ with
$\iota_\eps$  and $\tilde \iota_\eps$, we can bound from above the previous probability by the sum of
\begin{equation*}\label{prob 1sim}
\begin{split}
\bb Q^\alpha_* \Bigg[\pi_\cdot:\sup_{0\leq t\leq T}\,\Bigg\vert\, &  \<\rho_t, H_t \> \,-\,  \<\rho_0, H_0 \> \,-\, \int_0^t  \,
\big\<\rho_s , \partial_sH_s +\Delta H_s\big\> \,ds \\
&-\,\int_0^t\Big((\rho_s*\iota_\eps)(0)\partial_u H_s(0)-(\rho_s*\tilde\iota_\eps)(1)\partial_u H_s(1) \Big)\,ds \\
&+\,\int_0^t\alpha\Big((\rho_s*\iota_\eps)(0)-(\rho_s*\tilde\iota_\eps)(1)\Big)\Big(H_s(0)-H_s(1)\Big)\,ds \,\Bigg\vert\,>\,\delta/3\, \Bigg]\,,
\end{split}
\end{equation*}
with the probability of two sets, each one of them decreasing as $\eps\to 0$, to sets of null probability as a consequence of convolutions being  suitable  averages of $\rho$ around the boundary points $0$ and $1$.
Now, we claim that we can use  Portmanteau's Theorem and Proposition A.3 of \cite{fgn} in order to conclude that the previous probability is bounded from above by
\begin{equation*}%\label{portmanteau sem cont 1}
\begin{split}
 \varliminf_{n\to \infty}\bb Q^{\alpha}_{n,\mu_n} \Bigg[\pi_\cdot:&\sup_{0\leq t\leq T}\,\Bigg\vert\,
   \<\rho_t, H_t \> \,-\,  \<\rho_0, H_0 \> \,-\, \int_0^t  \,
\big\<\rho_s , \partial_sH_s +\Delta H_s\big\> \,ds \\
&-\,\int_0^t\Big((\rho_s*\iota_\eps)(0)\partial_u H_s(0)-(\rho_s*\tilde\iota_\eps)(1)\partial_u H_s(1) \Big)\,ds \\
&+\,\int_0^ta \Big((\rho_s*\iota_\eps)(0)-(\rho_s*\tilde\iota_\eps)(1)\Big)\Big(H_s(0)-H_s(1)\Big)\,ds \,\Bigg\vert\,>\,\delta/3\, \Bigg]\,.
\end{split}
\end{equation*}
Although the functions $H_t$, $H_0$, $\p_sH_s +\Delta H_s$, $\iota_\eps(\cdot,1)$ and $\tilde\iota_\eps(\cdot,0)$ may not belong to $C(\bb T)$,
we can proceed as in Section 6.2 of \cite{fgn} in order to justify the boundedness of the previous expression.
Next we outline the main arguments involved  in that procedure. Firstly, we replace each one of these functions by continuous functions which
 coincide with the original ones in the torus, except on a small neighborhood of their discontinuity points and such that their $L^\infty$-norm is bounded from above by
 the $L^\infty$-norm of the respective original functions.
By the exclusion rule, the set where we compare this change has small probability.
 Thus, in the presence of continuous functions, we  apply Portmanteau's Theorem and Proposition A.3 of \cite{fgn}.
After this, we return back to the original functions using the same arguments.

Recall that we consider $\bb T_n$ embedded in $\bb T$ and notice that $(\pi^n*\iota_\eps)(\frac{0}{n})=\eta^{\eps n}(0)$ and $(\pi^n*\tilde\iota_\eps)(\frac{1}{n})=\tilde \eta^{\eps n}(n-1)$, where
{\begin{equation}
\eta^{\eps n}(0)=\frac{1}{\eps n}\sum_{y=1}^{\lfloor \eps n\rfloor}\eta(y) \hspace{1cm} \textrm{and} \hspace{1cm}\tilde\eta^{\eps n}(n-1)=\frac{1}{\eps n}\sum_{y=\lfloor n-\eps n\rfloor}^{ n-1 }\eta(y),
\end{equation}}
where  $\lfloor u\rfloor$ denotes the biggest integer smaller or equal to $u$.
By  definition of  $\bb Q^{\alpha}_{n,\mu_n}$ and by summing and subtracting the term $\int_{0}^tn^2 \mc L_{n}\<\pi_s^n,\, H_s \>ds$ inside the supremum above,
we can bound the previous probability by
\begin{equation*}
\bb P^{\alpha}_{\mu_n} \Bigg[\eta_\cdot\,:\,\sup_{0\leq t\leq T}\,\Bigg\vert\,
\<\pi^n_t, H_t \> \,-\,  \<\pi^n_0, H_0 \> \,-\, \int_0^t  \,
\<\pi^n_s ,\p_s  H_s \>+n^2 \mc L_{n}\<\pi_s^n,\, H_s \> \,ds \,\Bigg\vert\,>\,\delta/6\, \Bigg]
\end{equation*}
and
\begin{equation*}%\label{second prob}
\begin{split}
\bb P^{\alpha}_{\mu_n} \Bigg[\eta_\cdot\,:\,\sup_{0\leq t\leq T}\,\Bigg\vert\,&
 \int_0^t  \,  n^2 \mc L_{n}\<\pi_s^n,\, H_s \> \,ds
-\, \int_0^t  \,  \<\pi^n_s ,\Delta H_s \> \,ds \\
&-\,\int_0^t\Big(\eta_s^{\eps n}(0)\partial_u H_s(0)-\tilde\eta_s^{\eps n}(n-1)\partial_u H_s(1) \Big)\,ds \\
&+\,\int_0^t\alpha\Big(\eta_s^{\eps n}(0)-\tilde\eta_s^{\eps n}(n-1)\Big)
\Big(H_s(0)-H_s(1)\Big)\,ds \,\Bigg\vert\,>\,\delta/6\, \Bigg]\,.
\end{split}
\end{equation*}

 { By Dynkin's formula, the expression inside the supremum in  the first probability above, is a martingale  that we denote by $\mc M^{n}_t(H)$.
 A simple computation shows that $\mc M^{n}_t(H)$ converges to zero in $L^2(\bb P^{\alpha}_{\mu_n})$
 as $n\to \infty$, and then, by Doob's inequality, the first probability
vanishes as $n\to \infty$, for every $\delta>0$.}
Now we treat the remaining term. Using the expression for
 $n^2\mc L_{n}\<\pi_s^n,\, H_s \>$,
we can bound the previous probability  by the sum of
\begin{equation*}
 \bb P^{\alpha}_{\mu_n} \Bigg[\eta_\cdot\,:\,\sup_{0\leq t\leq T}\,\Bigg\vert\,\int_0^t  \,  \<\pi^n_s ,\Delta  H_s \> \,ds-  \int_0^t
 \pfrac{1}{n}\sum_{x\neq n-1, 0} \eta_{s}(x)\Delta_nH_s \Big(\frac{x}{n}\Big)\,ds
\,\Bigg\vert\,>\,\delta/18\, \Bigg]\,,
\end{equation*}
\begin{equation*}
\begin{split}
\bb P^{\alpha}_{\mu_n} \Bigg[\eta_\cdot\,:\,\sup_{0\leq t\leq T}\,&\Bigg\vert\,
 \int_0^t\Big(\eta_s^{\eps n}(0)\partial_u H_s(0)-\tilde\eta_s^{\eps n}(n-1)\partial_u H_s(1) \Big)\,ds \\
&-\int_0^t \Big( \eta_{s}(0)\nabla_{\!n} H_s(0)\,-\,\eta_{s}(n-1)\nabla_{\!n} H_s(n-2)\Big)\,ds
\,\Bigg\vert\,>\,\delta/18\, \Bigg]
\end{split}
\end{equation*}
and
\begin{equation*}
\begin{split}
 \bb P^{\alpha}_{\mu_n} \Bigg[\eta_\cdot\,:\,\sup_{0\leq t\leq T}\,\Bigg\vert\,
 \int_0^t\alpha\Big(\eta_s^{\eps n}&(0)-\tilde\eta_s^{\eps n}(n-1)\Big)
\Big(H_s(0)-H_s(1)\Big)\,ds\\
&- \int_0^t\alpha\Big(\eta_s(0)-\eta_s(n-1)\Big)
\nabla_{\!n} H_s(n-1)\,ds \,\Bigg\vert\,>\,\delta/18\, \Bigg],
\end{split}
\end{equation*}
where for $x\in{\mathbb{T}_n}$, $$\Delta_nH\Big(\frac{x}{n}\Big)=n^2\Big(H\Big(\frac{x+1}{n}\Big)+H\Big(\frac{x-1}{n}\Big)-2H\Big(\frac{x}{n}\Big)\Big)$$ is the discrete laplacian and
  $$\nabla_n H\Big(\frac{x}{n}\Big)=n\Big(H\Big(\frac{x+1}{n}\Big)-H\Big(\frac{x}{n}\Big)\Big)$$ is the discrete derivative.
Since $H\in\A$, the discrete laplacian of $H_s$, namely $\Delta_n$, converges uniformly to the continuous laplacian of $H_s$, that is $\Delta H$, as $n\to\infty$, which is enough to conclude that the first probability is null.

To prove that the remaining probabilities are null, we observe that the discrete derivative of $H$, namely $\nabla_{\!n} H_s$,
converges uniformly to the continuous derivative, that is $\partial_u H_s$, as $n\to\infty$ and $\nabla_{\!n} H_s(n-1)$
converges uniformly to $H_s(0)-H_s(1)$, as $n\to\infty$, since $H\in\A$.
By the exclusion constrain and approximating the integrals by riemannian sums, the previous probabilities
 vanish as long as we show that
\begin{equation*}
\begin{split}
 \bb P^{\alpha}_{\mu_n} \Bigg[\eta_\cdot\,:\,\sup_{0\leq t\leq T}\,\Bigg\vert\,
 \int_0^t\Big(\eta_s^{\eps n}(0)&-\eta_{s}(0)\Big)\partial_u H_s(0)\\
&-\Big(\tilde\eta_s^{\eps n}(n-1)-\eta_{s}(n-1)\Big)
\partial_u H_s(1) \,ds \,\Bigg\vert\,>\,\delta\, \Bigg]\\
 \bb P^{\alpha}_{\mu_n} \Bigg[\eta_\cdot\,:\,\sup_{0\leq t\leq T}\,\Bigg\vert\,
 \int_0^t\alpha\Big\{\Big(&\eta_s^{\eps n}(0)-\tilde\eta_s^{\eps n}(n-1)\Big)\\&-\Big(\eta_s(0)-\eta_s(n-1)\Big)\Big\}
\Big(H_s(0)-H_s(1)\Big)\,ds \,\Bigg\vert\,>\,\delta\, \Bigg]\,.
\end{split}
\end{equation*}
converge to zero, as $\eps\to0$, for all $ \delta>0$. This is a consequence of Lemma 5.4 of \cite{fgn}.

\end{proof}

\subsection{Energy estimates}

\quad
\vspace{0.2cm}

The proof of Proposition  \ref{uniq_sobolev} is a consequence of energy estimates obtained by means of the symmetric slowed exclusion process introduced above and it can be summarized as follows.

 Firstly, we notice that the existence of weak solutions of equation \eqref{her}
is granted by tightness proved in \cite{fgn} together with the characterization of the limiting measure $\bb Q^{\alpha}_*$ given above.

Secondly, uniqueness was proved in Section \ref{s3}.

Finally, to prove the last statement we introduce the next proposition which  is usually denominated by \emph{energy estimate}. It says that any limit measure $\bb Q^{\alpha}_*$ is concentrated on functions with finite energy. Moreover, it says that the expected energy is also finite. Such result makes the link between the particle system $\{\eta_t;\,t\geq{0}\}$ and the weak solution of the heat equation with Robin's boundary condition given in \eqref{her}.

\begin{proposition}\label{Prop_03}
Let $\bb Q_*^{\alpha}$ be a limit point of $\{\bb Q^{\alpha}_{n,\mu_n}\}_{n\in \bb N}$. Then,
\begin{equation*}
\bb E_{\bb Q_*^{\alpha}} \Big[  \sup_{H} \Big\{ \<\!\<\rho,  \,  \partial_u H\>\!\>
\,- \, 2 \<\!\<H ,\,H\>\!\>_{\alpha}\Big\} \Big] \, \le \,K_0\,,
\end{equation*}
where $K_0$ is a constant that does not depend on $\alpha$ and the supremum is taken over functions $H\in C^{\,0,1}([0,T]\times\bb T)$, see Definition \ref{space C^n,m}.
\end{proposition}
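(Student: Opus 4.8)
The plan is to obtain this energy estimate by the classical combination of the entropy inequality with a Feynman--Kac bound, transferred to the particle system, where the supremum over test functions is handled through an exponential Chebyshev argument and the resulting variational problem is solved by completing the square bond by bond. The one point requiring care is that the slow bond $\{-1,0\}$ must reproduce exactly the singular part $\tfrac1\alpha\delta_0$ of $W_\alpha$, so that the final constant is free of $\alpha$.

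First I would reduce the supremum to a finite one. Set $\Phi_H(\rho)=\<\!\<\rho,\p_uH\>\!\>-2\<\!\<H,H\>\!\>_\alpha$. Since $C^{0,1}([0,T]\times\bb T)$ is separable and $\Phi_H$ depends continuously on $H$ (uniform control of $H$ and $\p_uH$ governs both the pairing with $\rho$ and the pointwise value $H_s(0)$ entering $\<\!\<H,H\>\!\>_\alpha$), I fix a countable dense family $\{H^k\}$ and, by monotone convergence, reduce to bounding $\bb E_{\bb Q^\alpha_*}[\max_{k\le m}\Phi_{H^k}]$ uniformly in $m$. Crucially $\Phi_H$ contains no boundary trace of $\rho$, only the weakly continuous pairing $\<\!\<\rho,\p_uH\>\!\>$ minus a constant, so $\max_{k\le m}\Phi_{H^k}$ is bounded and continuous on $\mc D([0,T],\mc M)$; Portmanteau's theorem then lets me pass to $\varlimsup_n\bb E^\alpha_{\mu_n}[\max_{k\le m}\Phi^n_{H^k}]$, where $\Phi^n_H$ is the discrete analogue obtained by replacing $\<\!\<\rho,\p_uH\>\!\>$ by $\int_0^T\tfrac1n\sum_x\eta_s(x)\nabla_nH_s(\tfrac xn)\,ds$.

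Now for the core bound. By the entropy inequality, for any $\gamma>0$,
\begin{equation*}
\bb E^\alpha_{\mu_n}\Big[\max_{k\le m}\Phi^n_{H^k}\Big]\le\frac{H(\mu_n\,|\,\nu^n_\gamma)}{\gamma}+\frac1\gamma\log\bb E_{\nu^n_\gamma}\Big[\exp\Big(\gamma\max_{k\le m}\Phi^n_{H^k}\Big)\Big],
\end{equation*}
and $\exp(\gamma\max_kX_k)\le\sum_k\exp(\gamma X_k)$ splits the moment into $m$ single-function factors, each handled by Feynman--Kac. After a discrete summation by parts one writes $\tfrac1n\sum_x\eta(x)\nabla_nH_s(\tfrac xn)=-\sum_xH_s(\tfrac xn)(\eta(x)-\eta(x-1))$, so the relevant variational expression is
\begin{equation*}
\sup_f\Big\{\int\big[-\textstyle\sum_xH_s(\tfrac xn)(\eta(x)-\eta(x-1))\big]f\,d\nu^n_\gamma-\tfrac{n^2}{\gamma}\,\mc D_n(\sqrt f)\Big\},
\end{equation*}
the supremum over densities $f$, with $\mc D_n(\sqrt f)=\tfrac12\sum_x\xi^n_{x-1,x}I_x(f)$ and $I_x(f)=\int(\sqrt{f(\eta^{x-1,x})}-\sqrt{f(\eta)})^2\,d\nu^n_\gamma$. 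For each bond I use $\int(\eta(x)-\eta(x-1))f\,d\nu=\tfrac12\int(\eta(x)-\eta(x-1))(f(\eta)-f(\eta^{x-1,x}))\,d\nu$ followed by Young's inequality with a parameter $A_x$, bounding the $x$-th contribution by $\tfrac{A_x}{4}I_x(f)+\tfrac{H_s(x/n)^2}{A_x}$. Choosing $\gamma=4n$ and $A_x=\tfrac{2n^2}{\gamma}\xi^n_{x-1,x}=\tfrac n2\xi^n_{x-1,x}$ cancels the $I_x(f)$ terms against $\tfrac{n^2}\gamma\mc D_n$, leaving $\sum_x\tfrac{2H_s(x/n)^2}{n\,\xi^n_{x-1,x}}$. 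The bonds of rate $1$ give a Riemann sum converging to $2\int_{\bb T}H_s^2\,du$, while the slow bond $\{-1,0\}$, of rate $\alpha n^{-1}$, contributes $\tfrac2\alpha H_s(0)^2$; together they assemble precisely $2\int_{\bb T}H_s^2\,W_\alpha(du)$. Integrating in time, the exponent of each single-function factor is at most $2\<\!\<H^k,H^k\>\!\>_\alpha+o(n)$, exactly annihilating the deterministic penalty $-2\<\!\<H^k,H^k\>\!\>_\alpha$ inside $\Phi^n_{H^k}$; hence every summand is $e^{o(n)}$ and the sum over $k$ costs only $\tfrac{\log m}{\gamma}\to0$. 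Since the relative entropy per site is uniformly bounded, $H(\mu_n\,|\,\nu^n_\gamma)\le C_0n$, the first term is at most $C_0/4=:K_0$, independent of $\alpha$ and of $m$. Letting $n\to\infty$ and then $m\to\infty$ by monotone convergence yields the claim.

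I expect the main obstacle to be the bond-by-bond completing of the square at the slow bond: the rate $\xi^n_{-1,0}=\alpha n^{-1}$ must combine with the normalization $n^2/\gamma$ so that the residual weight at the site $0$ is exactly $1/\alpha$, thereby reconstructing the Dirac mass of $W_\alpha$ and keeping $K_0$ free of $\alpha$. The coupling between the penalty factor $2$ in the statement and the choice $\gamma=4n$ is precisely what makes this cancellation exact; a different penalty would leave a residual boundary term that grows or decays with $\alpha$.
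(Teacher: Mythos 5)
Your proposal is correct and is essentially the proof the paper intends: the paper omits the argument, referring to \cite[Subsection 5.2]{fgn}, which is exactly this entropy inequality plus Feynman--Kac scheme, with the supremum reduced to a countable maximum via $e^{\gamma\max_k X_k}\le\sum_k e^{\gamma X_k}$ and the variational problem solved bond by bond so that the slow bond of rate $\alpha n^{-1}$ produces the weight $\tfrac{1}{\alpha}H_s(0)^2$, i.e.\ the Dirac part of $W_\alpha$. Your bookkeeping (the choice $\gamma=4n$, $A_x=\tfrac{2n^2}{\gamma}\xi^n_{x-1,x}$, and the uniform entropy bound $H(\mu_n|\nu^n_\gamma)\le C_0 n$ yielding $K_0$ independent of $\alpha$) matches the cited argument.
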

Since the proof of this proposition follows the same lines of \cite[Subsection 5.2]{fgn}, it is omitted.\medskip

% The uniqueness of weak solutions of \eqref{her} was proved in Section \ref{s3}.
As seen in this section, the measure $\bb Q^{\alpha}_*$ is
concentrated in  weak solutions of the heat equation with Robin's boundary conditions as given in \eqref{her}.
In Section \ref{s3}, it was proved uniqueness of such weak solutions. This implies that the measure $\bb Q^{\alpha}_*$ is, in fact, a delta of Dirac concentrated on the unique weak solution of \eqref{her}. Denote this solution by $\rho^\alpha$.
By the previous proposition, we conclude that
\begin{equation*}
 \sup_{H} \Big\{ \<\!\<\rho^\alpha,  \,  \partial_u H\>\!\>
\,- \, 2 \<\!\<H ,\,H\>\!\>_{\alpha}\Big\} \le \,K_0\,.
\end{equation*}
where $K_0$ is a constant that does not depend on $\alpha$ and the supremum is taken over functions $H\in C^{\,0,1}([0,T]\times\bb T)$. This proves  Proposition
 \ref{uniq_sobolev}.

\section{Proof of Theorem \ref{pdePT}.}\label{s5}
 Since we  proved Proposition \ref{uniq_sobolev}, from now on,  for fixed $\alpha>0$, we denote
by $\rho^\alpha:[0,T]\times[0,1]\to [0,1]$ the unique weak solution of \eqref{her}. We notice that $\rho^\alpha$ takes values between 0 and 1 since we imposed the same condition for $\rho_0$.

Our scheme of proof has the following steps:

In  Proposition \ref{propositL1}, we prove that the set $\{\rho^{\alpha}:\,\alpha>0\}$
is bounded in $L^2(0,T;\mc H^1)$. This  guarantees  the relative compactness of this set.
  In Proposition \ref{propositL2}, we prove that any limit of a convergent
 subsequence of  $\{\rho^{\alpha_n}\}_{n\in\bb N}$ is in $L^2(0,T;\mc H^1)$.
In Proposition \ref{novolema}, we obtain some smoothness  of $\rho^\alpha$ on time, that we need in order to take limits in $\alpha$.

The next step will be to analyze separately each term of  integral equation \eqref{eqint2}
 to obtain asymptotic results in $\alpha$ for its terms.
 Proposition \ref{lemmaintegrais0} and Proposition \ref{lemmaintegrais2} cover the limit of terms in the integral equation \eqref{eqint2} that can be treated in the same way both for $\alpha\to 0$ and $\alpha\to\infty$.  Proposition \ref{lemmapdePT1} and Proposition \ref{lemmapdePT2} cover the limit of a integral term for the case $\alpha\to 0$ and $\alpha\to \infty$, respectively.

 These convergences of the integral terms will show that any convergent subsequence of $\{\rho^\alpha:\,\alpha>0\}$ with $\alpha\to 0$ converges to the unique weak solution of the heat equation with Neumann's boundary conditions, and any convergent subsequence of $\{\rho^\alpha:\,\alpha>0\}$ with $\alpha\to \infty$ converges to the unique weak solution of the heat equation with periodic boundary conditions. Putting this together with the relative compactness of the set  $\{\rho^\alpha:\,\alpha>0\}$ the convergence follows.

 We introduce first a space of test functions that will be used in the sequel.

\begin{definition}\label{def9}
The space $C_c$ consists of functions $H\in C^{\,0,1}([0,T]\times [0,1])$
 with compact support in $[0,T]\times(0,1)$.
\end{definition}

\begin{proposition}\label{propositL1}
 The set $\{\rho^\alpha:\,\alpha>0\}$ is bounded in $L^2(0,T;\mc H^1)$.
\end{proposition}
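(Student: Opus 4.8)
The plan is to estimate separately the two contributions to
$$\|\rho^\alpha\|_{L^2(0,T;\mc H^1)}^2=\int_0^T\|\rho^\alpha_t\|_{L^2[0,1]}^2\,dt+\int_0^T\|\partial_u\rho^\alpha_t\|_{L^2[0,1]}^2\,dt\,,$$
and to bound each by a constant not depending on $\alpha$. The first term is immediate: since $\rho^\alpha$ takes values in $[0,1]$, we have $\|\rho^\alpha_t\|_{L^2[0,1]}^2\le 1$ for every $t$, whence $\int_0^T\|\rho^\alpha_t\|_{L^2[0,1]}^2\,dt\le T$. All the work therefore goes into the gradient term $\dl\partial_u\rho^\alpha,\partial_u\rho^\alpha\dr$, and here the idea is to feed the uniform variational bound of Proposition \ref{uniq_sobolev} into a restricted class of test functions.

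The crucial observation is that, although the supremum in Proposition \ref{uniq_sobolev} involves the weighted quantity $\dl H,H\dr_\alpha=\dl H,H\dr+\tfrac1\alpha\int_0^TH_s(0)^2\,ds$, whose Dirac part carries the dangerous factor $1/\alpha$, this part disappears as soon as one restricts to test functions vanishing at the slow-bond point $0\in\bb T$. Concretely, I would restrict the supremum to $H\in C_c$ (Definition \ref{def9}); such $H$ have compact support in $[0,T]\times(0,1)$, so $H_s(0)=0$ for all $s$ and hence $\dl H,H\dr_\alpha=\dl H,H\dr$. Proposition \ref{uniq_sobolev} then yields, for every $\alpha>0$,
\begin{equation*}
\sup_{H\in C_c}\Big\{\dl\rho^\alpha,\partial_u H\dr-2\dl H,H\dr\Big\}\le K_0\,,
\end{equation*}
a bound now completely free of $\alpha$.

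Next I would convert this into an $L^2$ bound on $g:=\partial_u\rho^\alpha$, which exists in $L^2([0,T]\times[0,1])$ because $\rho^\alpha\in L^2(0,T;\mc H^1)$. For $H\in C_c$ the defining property of the weak derivative (together with a routine approximation of $H_s$ by $C^\infty$ functions with compact support) gives $\dl\rho^\alpha,\partial_u H\dr=-\dl g,H\dr$, the boundary contributions vanishing since $H$ has compact support in $(0,1)$. Thus the displayed bound reads $\sup_{H\in C_c}\{-\dl g,H\dr-2\dl H,H\dr\}\le K_0$. Since $C_c$ is dense in $L^2([0,T]\times[0,1])$, one may approach the maximizer $-g/4$ of the concave functional $H\mapsto-\dl g,H\dr-2\dl H,H\dr$ by elements of $C_c$; evaluating at $-g/4$ gives the value $\tfrac18\dl g,g\dr$, so the restricted supremum equals $\tfrac18\|\partial_u\rho^\alpha\|_{L^2([0,T]\times[0,1])}^2$. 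Combining with the bound above,
\begin{equation*}
\int_0^T\|\partial_u\rho^\alpha_t\|_{L^2[0,1]}^2\,dt=\dl g,g\dr\le 8K_0\,,
\end{equation*}
uniformly in $\alpha$, and together with the estimate on the first term this gives $\|\rho^\alpha\|_{L^2(0,T;\mc H^1)}^2\le T+8K_0$ for all $\alpha>0$.

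The argument is essentially a variational characterization of the $H^1$ seminorm, and no single step is genuinely hard; the points requiring care, and the ones I expect to be the main obstacle, are (i) justifying that the restriction to $C_c$ is legitimate, i.e.\ that the Dirac term indeed drops out and the bound of Proposition \ref{uniq_sobolev} survives on this smaller class, and (ii) the density argument identifying the restricted supremum with $\tfrac18\|\partial_u\rho^\alpha\|^2$, which must be run carefully precisely because the a priori control of $\|\partial_u\rho^\alpha\|$ is exactly what is being established.
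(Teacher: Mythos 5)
Your proof is correct and follows essentially the same route as the paper: restrict the variational bound of Proposition \ref{uniq_sobolev} to test functions in $C_c$ so that $\<\!\<H,H\>\!\>_\alpha=\<\!\<H,H\>\!\>$, identify the restricted supremum with $\tfrac18\int_0^T\Vert\partial_u\rho^\alpha_t\Vert^2\,dt$, and add the trivial bound on the $L^2$ part coming from $0\le\rho^\alpha\le 1$. The only cosmetic difference is that the paper delegates the variational identity to Corollary \ref{lema3} in the Appendix (proved via the Riesz Representation Theorem and Young's inequality), whereas you establish it inline by density of $C_c$ and evaluation at the maximizer $-\partial_u\rho^\alpha/4$, which is legitimate since $\rho^\alpha\in L^2(0,T;\mc H^1)$ is part of Definition \ref{heat equation Robin}.
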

\begin{proof}
We begin by observing that Proposition \ref{uniq_sobolev} implies the inequality
\begin{equation}\label{hip1}
 \<\!\<\rho^{\alpha},\partial_u H\>\!\>-2\<\!\<H,H\>\!\>\leq K_0\,,
\end{equation}
for all $H\in C_c$.   This is a consequence of the simple fact that, if $H$ vanishes at a neighborhood of $0$ and $1$, then $\<\!\<H,H\>\!\>=\<\!\<H,H\>\!\>_{\alpha}$, for all $\alpha>0$.

An application of the  Riesz Representation Theorem gives us that
\begin{equation*}
\sup_{H\in C_c}\,\Big\{ \<\!\<\rho^\alpha, \p_u H\>\!\>- 2 \<\!\< H,H\>\!\>\Big\}\,=\,\pfrac{1}{8}\int_0^T\Vert\p_u\rho^\alpha_t\Vert^2\,dt\,.
\end{equation*}
Aiming to concentrate in the main facts, the proof of  previous equality  is postponed to Corollary \ref{lema3} of the Appendix.

 Provided by the inequality above  and recalling that $K_0$ does not depend on $\alpha$,  we conclude
that $\{\rho^\alpha:\,\alpha>0\}$ is bounded in $L^2(0,T;\mc H^1)$.
 \end{proof}

The boundedness of $\{\rho^\alpha:\,\alpha>0\}$ in $L^2(0,T;\mc H^1)$ implies a compact embedding of $\{\rho^\alpha:\,\alpha>0\}$  in  $L^2([0,T]\times [0,1])$. This is a particular case of the Rellich-Kondrachov's
 Theorem for spaces involving time, that can be found in \cite{TE}.
To verify it in detail, we list the exact steps: following the notation of \cite[Page 271, Subsection 2.2]{TE},
take $X_0=X=\mc H^1$, $X_1=L^2[0,1]$ and notice that any Hilbert space is reflexive. This attains the hypothesis of
\cite[Theorem 2.1, page 271]{TE} and corresponds to the case we consider. By this compact embedding,
 any sequence $\{\rho^{\alpha_n}\}_{n\in\bb N}$ has a convergent subsequence
 in $L^2([0,T]\times [0,1])$.\medskip

 Next, we show that the limit of a convergent subsequence of $\{\rho^{\alpha}:\,\alpha>0\}$ is
 in the space $L^2(0,T;\mc H^1)$.

\begin{proposition}\label{propositL2}
If  $\rho^*$  is the limit in $L^2([0,T]\times [0,1])$  of some sequence in the set $\{\rho^{\alpha}:\,\alpha>0\}$, then $\rho^* \in L^2(0,T;\mc H^1)$.
\end{proposition}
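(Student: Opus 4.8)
The plan is to exploit that $L^2(0,T;\mc H^1)$ is a Hilbert space, hence reflexive, so that bounded sequences are weakly precompact, and then to identify the resulting weak limit with the given strong $L^2$ limit. Suppose $\rho^*=\lim_k \rho^{\alpha_{n_k}}$ in $L^2([0,T]\times[0,1])$ for some sequence of parameters $\alpha_{n_k}$. By Proposition \ref{propositL1}, the whole family $\{\rho^\alpha:\alpha>0\}$ is bounded in $L^2(0,T;\mc H^1)$; in particular the subsequence $\{\rho^{\alpha_{n_k}}\}$ is bounded there.

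First I would invoke weak sequential compactness. Since $L^2(0,T;\mc H^1)$ is a Hilbert space, the bounded sequence $\{\rho^{\alpha_{n_k}}\}$ admits a further subsequence (not relabeled) converging weakly in $L^2(0,T;\mc H^1)$ to some element $\tilde\rho\in L^2(0,T;\mc H^1)$.

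Next I would reconcile the two topologies. The inclusion $L^2(0,T;\mc H^1)\hookrightarrow L^2([0,T]\times[0,1])$ is continuous, because $\Vert\zeta\Vert_{\mc H^1}\geq\Vert\zeta\Vert_{L^2[0,1]}$ pointwise in time and hence $\Vert\xi\Vert_{L^2(0,T;\mc H^1)}\geq\Vert\xi\Vert_{L^2([0,T]\times[0,1])}$. Consequently, for every $\varphi\in L^2([0,T]\times[0,1])$ the pairing $\<\!\<\cdot,\varphi\>\!\>$ is a bounded linear functional on $L^2(0,T;\mc H^1)$, so weak convergence in $L^2(0,T;\mc H^1)$ gives $\<\!\<\rho^{\alpha_{n_k}},\varphi\>\!\>\to\<\!\<\tilde\rho,\varphi\>\!\>$. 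On the other hand, strong (hence weak) convergence in $L^2([0,T]\times[0,1])$ gives $\<\!\<\rho^{\alpha_{n_k}},\varphi\>\!\>\to\<\!\<\rho^*,\varphi\>\!\>$ for the same test functions. By uniqueness of weak limits in $L^2([0,T]\times[0,1])$, $\tilde\rho=\rho^*$ almost everywhere, whence $\rho^*=\tilde\rho\in L^2(0,T;\mc H^1)$, as claimed.

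The argument is essentially soft; the only point deserving care is the bookkeeping of topologies, namely ensuring that the weak limit in $L^2(0,T;\mc H^1)$ and the strong limit in $L^2([0,T]\times[0,1])$ coincide, which is exactly where the continuity of the embedding and the uniqueness of weak limits enter. An alternative route, avoiding the second extraction, would be to use lower semicontinuity of the energy functional: by Corollary \ref{lema3} one has $\tfrac18\int_0^T\Vert\partial_u\rho_t\Vert^2\,dt=\sup_{H\in C_c}\{\<\!\<\rho,\partial_u H\>\!\>-2\<\!\<H,H\>\!\>\}$, and as a supremum of functionals that are continuous for $L^2([0,T]\times[0,1])$-convergence this quantity is lower semicontinuous; passing to the limit along $\rho^{\alpha_{n_k}}\to\rho^*$ bounds it by $K_0$, and the same corollary then produces a weak derivative $\partial_u\rho^*\in L^2$, yielding $\rho^*\in L^2(0,T;\mc H^1)$.
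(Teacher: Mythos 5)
Your main argument is correct, but it takes a genuinely different route from the paper. The paper never extracts a weakly convergent subsequence: it passes to the limit in the energy inequality $\<\!\<\rho^{\alpha_n},\partial_u H\>\!\>-2\<\!\<H,H\>\!\>\le K_0$ for $H\in C_c$ (which survives strong $L^2$ convergence), then replaces $H$ by $yH$ and minimizes over $y\in\bb R$ to conclude that $H\mapsto\<\!\<\rho^*,\partial_u H\>\!\>$ is a bounded linear functional on the dense set $C_c\subset L^2([0,T]\times[0,1])$, and finally applies the Riesz Representation Theorem to produce $\partial_u\rho^*\in L^2([0,T]\times[0,1])$ satisfying the integration-by-parts identity, i.e.\ $\rho^*\in L^2(0,T;\mc H^1)$. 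Your route instead uses reflexivity of the Hilbert space $L^2(0,T;\mc H^1)$, weak sequential compactness of the family bounded by Proposition \ref{propositL1}, continuity of the embedding into $L^2([0,T]\times[0,1])$, and uniqueness of weak limits; the topology bookkeeping is handled correctly (weak convergence is preserved under the continuous inclusion, and the strong limit is also a weak limit). This is softer and shorter; what the paper's construction buys in exchange is the explicit quantitative bound $\int_0^T\Vert\partial_u\rho^*_t\Vert_{L^2[0,1]}^2\,dt\le 8K_0$, which is used later (e.g.\ in the proof of Proposition \ref{lemmaintegrais2}) --- your argument also yields it via weak lower semicontinuity of the norm, though you do not note this. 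One caveat on your alternative sketch: Corollary \ref{lema3} includes $\xi\in L^2(0,T;\mc H^1)$ among its hypotheses, so citing it to ``produce'' the weak derivative of $\rho^*$ is circular as literally stated; the lower-semicontinuity step correctly gives finiteness of the supremum for $\rho^*$, but the passage from that finiteness to $\partial_u\rho^*\in L^2$ must be the Riesz argument itself --- which is exactly the paper's proof --- rather than the corollary.
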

\begin{proof}
Suppose that $\rho^{\alpha_n}$ converges to $\rho^*$ in $L^2([0,T]\times [0,1])$,  as $n\to\infty$. By Proposition \ref{uniq_sobolev},
for each $n\in\bb N$, $\rho^{\alpha_n}$ satisfies  \eqref{hip1} for any $H\in C_c$:
 \begin{equation*}
 \int_0^T\<\rho^{\alpha_n}_s,\,\partial_u H_s\,\>\,ds-2\int_0^T\<H_s,\,H_s\,\>\,ds\leq K_0
\end{equation*}
and $K_0$ does not depend neither on $n$ nor $H$.
Taking the limit  $n\to \infty$ in the previous inequality, we get that
\begin{equation*}
 \int_0^T\<\rho^{*}_s,\,\partial_u H_s\,\>\,ds-2\int_0^T\<H_s,\,H_s\,\>\,ds\leq K_0\,.
\end{equation*}
Replacing $H$ by $yH$ in the previous inequality and then minimizing over $y\in \bb R$ gives that
\begin{equation*}
\begin{split}
\varphi:\;&C_c\to{\bb  R}\\
&H\mapsto  \int_0^T\<\rho^{*}_s,\,\partial_u H_s\,\>\,ds
 \end{split}
 \end{equation*}
 is a bounded linear functional. Notice that the set $C_c$ is  dense in $L^2([0,T]\times [0,1])$. Hence, by the Riesz Representation Theorem, there exists $\partial_u\rho^{*}\in L^2([0,T]\times [0,1])$ such that
\begin{equation*}
 \int_0^T\<\rho^{*}_s,\,\partial_u H_s\,\>\,ds=- \int_0^T\<\partial_u\rho^{*}_s,\, H_s\,\>\,ds\,,
\end{equation*}
for all functions $H\in C_c$, which is the same as saying that is $\rho^*$ belongs to $L^2(0,T;\mc H^1)$.
\end{proof}

Now, we analyze the integral equation \eqref{eqint2}.  Integrating  by parts, it  can be rewritten as
\begin{equation}\label{eqintbyparts}
\begin{split}
 \<\rho^\alpha_t,\,H_t\,\> -\<\rho^\alpha_0,\,H_0\,\>+\int_0^t\<\, \partial_u\rho^\alpha_s& , \,\partial_u  H_s\,\>\, ds-
\int_0^t\< \rho^\alpha_s, \, \partial_s H_s\,\>\, ds\\
+& \int_0^t\alpha(\rho^\alpha_s(0)-\rho^\alpha_s(1))(H_s(0)-H_s(1))\,ds\;=\;0\;,
\end{split}
\end{equation}
where $\partial_u\rho^\alpha$ is the weak derivative of  $\rho^\alpha$.
Our goal  consists in  analyzing  the limit, as  $\alpha\to 0$ or $\alpha\to\infty$, of the terms in the previous equation.
Due to  boundary restrictions, the last integral term above is analyzed separately. Moreover, Proposition \ref{lemmapdePT1} covers the case  $\alpha\to 0$ and Proposition \ref{lemmapdePT2} covers the case $\alpha\to \infty$.
We begin by showing some smoothness of a weak solution of \eqref{her} that will be needed in order to take limits.

\begin{proposition}\label{novolema}
For any  $H\in\A$, there exists a constant $C_H^T$ not depending on $\alpha$ such that
\begin{equation*}
\vert\,\< \rho^\alpha_t, \, H_t\,\>\,  -\,\< \rho^\alpha_s, \, H_s\,\>\,\vert \leq C_H^T |\,t-s\,|^{1/2}\,,\qquad\forall s,t\in [0,T]\,.
\end{equation*}
\end{proposition}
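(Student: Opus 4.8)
The plan is to read off the increment $\langle\rho^\alpha_t,H_t\rangle-\langle\rho^\alpha_s,H_s\rangle$ directly from the integrated-by-parts identity \eqref{eqintbyparts} as a single integral over the time window $[s,t]$, and then estimate the three resulting pieces one at a time, keeping every constant manifestly independent of $\alpha$. Subtracting \eqref{eqintbyparts} written at time $s$ from the same identity written at time $t$, the initial terms $\langle\rho^\alpha_0,H_0\rangle$ cancel and one is left with
\begin{equation*}
\langle\rho^\alpha_t,H_t\rangle-\langle\rho^\alpha_s,H_s\rangle
= -\int_s^t\langle\partial_u\rho^\alpha_r,\partial_u H_r\rangle\,dr
+\int_s^t\langle\rho^\alpha_r,\partial_r H_r\rangle\,dr
-\int_s^t\alpha(\rho^\alpha_r(0)-\rho^\alpha_r(1))(H_r(0)-H_r(1))\,dr .
\end{equation*}
It therefore suffices to bound each of these three integrals by a constant, depending only on $H$ and $T$, times $|t-s|^{1/2}$.

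The first integral is where the uniform Sobolev bound enters, and it is the term responsible for the exponent $1/2$. Since $H\in\A$, the quantity $\sup_r\|\partial_u H_r\|_{L^2[0,1]}$ is finite, so applying Cauchy–Schwarz first in space and then in time gives
\begin{equation*}
\Big|\int_s^t\langle\partial_u\rho^\alpha_r,\partial_u H_r\rangle\,dr\Big|
\le \sup_r\|\partial_u H_r\|_{L^2[0,1]}\;|t-s|^{1/2}\Big(\int_0^T\|\partial_u\rho^\alpha_r\|_{L^2[0,1]}^2\,dr\Big)^{1/2},
\end{equation*}
and the last factor is at most $\|\rho^\alpha\|_{L^2(0,T;\mc H^1)}$, which is bounded uniformly in $\alpha$ by Proposition \ref{propositL1}. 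The second integral is even simpler: because $0\le\rho^\alpha\le1$ forces $\|\rho^\alpha_r\|_{L^2[0,1]}\le 1$, while $\partial_r H$ is bounded on $[0,T]\times[0,1]$, it is bounded by a constant times $|t-s|$, hence by a constant times $|t-s|^{1/2}$ on $[0,T]$. Neither constant depends on $\alpha$.

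The main obstacle is the third, Robin boundary integral, whose integrand carries the \emph{diverging} prefactor $\alpha$; controlling it uniformly in $\alpha$ is the crux of the whole statement. The essential $\alpha$-free input is the energy estimate underlying Proposition \ref{uniq_sobolev}, which bounds the boundary discrepancy uniformly, furnishing a constant $C$ independent of $\alpha$ with $\int_0^T\alpha\,(\rho^\alpha_r(0)-\rho^\alpha_r(1))^2\,dr\le C$. After bounding $|H_r(0)-H_r(1)|$ by $\sup_r|H_r(0)-H_r(1)|$, one is reduced to estimating the time-integrated boundary flux $\int_s^t\alpha\,|\rho^\alpha_r(0)-\rho^\alpha_r(1)|\,dr$, and the goal is to absorb the prefactor $\alpha$ against this uniform weighted-$L^2$ bound through a Cauchy–Schwarz argument in the time variable, producing a factor $|t-s|^{1/2}$ with a constant free of $\alpha$. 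I expect this to be the genuinely delicate step: the difficulty is that one must exploit the energy bound on $\sqrt\alpha\,(\rho^\alpha(0)-\rho^\alpha(1))$ rather than any pointwise control of the flux, and it is exactly the combination of this bound with the boundedness $0\le\rho^\alpha\le1$ that has to be carried out carefully to keep the constant $\alpha$-independent. Once this Robin contribution is shown to be $\le C_H\,|t-s|^{1/2}$, summing the three estimates yields the asserted inequality with a constant $C_H^T$ depending only on $H$ and $T$.
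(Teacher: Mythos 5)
Your decomposition of the increment $\< \rho^\alpha_t, H_t\>-\< \rho^\alpha_s, H_s\>$ into the three integrals of \eqref{eqintbyparts}, and your estimates for the first two, coincide with the paper's proof: Cauchy--Schwarz in space and time together with the $\alpha$-uniform bound $\int_0^T\Vert\partial_u\rho^\alpha_r\Vert^2_{L^2[0,1]}\,dr\le 8K_0$ (your appeal to Proposition \ref{propositL1} is legitimate and non-circular, since it precedes this statement) handles the gradient term, and $\vert R_2\vert\le\Vert\partial_r H\Vert_\infty\vert t-s\vert\le T^{1/2}\Vert\partial_r H\Vert_\infty\vert t-s\vert^{1/2}$ handles the time-derivative term.

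The Robin term is where your plan, as stated, fails --- precisely at the step you flag as delicate. The energy input you propose, $\int_0^T\alpha\,(\rho^\alpha_r(0)-\rho^\alpha_r(1))^2\,dr\le C$ with $C$ free of $\alpha$, is true (it follows from Proposition \ref{uniq_sobolev}, Proposition \ref{lema3Wa}, and the identity $\rho^\alpha_r(0)-\rho^\alpha_r(1)=\frac{1}{\alpha}\partial_u\rho^\alpha_r(0)$ of Proposition \ref{fund}, which you never invoke), but it is too weak to absorb the prefactor $\alpha$. Writing $J_r=\rho^\alpha_r(0)-\rho^\alpha_r(1)$ and $\alpha\vert J_r\vert=\sqrt{\alpha}\cdot\sqrt{\alpha}\,\vert J_r\vert$, your Cauchy--Schwarz in time gives $\int_s^t\alpha\vert J_r\vert\,dr\le \sqrt{\alpha}\,\vert t-s\vert^{1/2}C^{1/2}$, and the factor $\sqrt{\alpha}$ diverges. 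This loss is not an artifact: $J_r\equiv c/\sqrt{\alpha}$ satisfies your weighted bound uniformly in $\alpha$, yet $\int_s^t\alpha\vert J_r\vert\,dr=c\sqrt{\alpha}\,\vert t-s\vert$ blows up, so no H\"older-type rearrangement can close the argument from your hypothesis alone, and the pointwise bound $0\le\rho^\alpha\le1$ does not rescue it for large $\alpha$. The missing idea is the paper's two-step upgrade. First, by Proposition \ref{fund} the integrand of $R_3$ is rewritten exactly as $\partial_u\rho^\alpha_r(0)\,(H_r(0)-H_r(1))$. Second, for $\alpha\ge 1$ the monotonicity $\<\!\<H,H\>\!\>_{\alpha}\le\<\!\<H,H\>\!\>_{1}$ turns Proposition \ref{uniq_sobolev} into $\<\!\<\rho^{\alpha},\partial_u H\>\!\>-2\<\!\<H,H\>\!\>_{1}\leq K_0$, so Proposition \ref{lema3Wa} applied with the weight $W_1$ yields $\int_0^T(\partial_u\rho^\alpha_r(0))^2\,dr\le 8K_0$, i.e.\ the $\alpha^2$-weighted bound $\int_0^T\alpha^2 J_r^2\,dr\le 8K_0$, strictly stronger than yours; plain Cauchy--Schwarz in time then gives $\vert R_3\vert\le(8K_0)^{1/2}\,2\Vert H\Vert_\infty\vert t-s\vert^{1/2}$ uniformly over $\alpha\ge1$. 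For $\alpha<1$ the crude bound $\vert R_3\vert\le 4\Vert H\Vert_\infty\vert t-s\vert$ suffices, a case split your write-up also omits.
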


\begin{proof}
Let $H\in\A$. Since $\rho^\alpha$ satisfies the integral equation  \eqref{eqintbyparts}, it is sufficient to estimate the absolute value of:
\begin{equation*}
\begin{split}
&R_1:= \int_s^t\<\, \partial_u\rho^\alpha_r , \,\partial_u  H_r\>\,dr\,,\\
&R_2:=\int_s^t\< \rho^\alpha_r, \, \partial_r H_r\,\>\, dr\,,\\
&R_3:= \int_s^t\alpha(\rho^\alpha_s(0)-\rho^\alpha_s(1))(H_s(0)-H_s(1))\,dr\,.
\end{split}
\end{equation*}

We start by the case $\alpha\geq 1$. At first we notice that Proposition \ref{fund} guarantees that $R_3$ can be rewritten\footnote{This is essentially the Fundamental Theorem of Calculus by seeing the unit interval as the torus. The proof is technical and is postponed to the Appendix.} as
\begin{equation*}
 \begin{split}
& \int_s^t\partial_u\rho^\alpha_r(0)(H_r(0)-H_r(1))\,dr\;.
\end{split}
\end{equation*}
By the Cauchy-Schwarz inequality,
\begin{equation*}
|R_3|\leq{\Big(\int_0^T(\partial_u\rho^\alpha_r(0))^2\,dr\Big)^{1/2}2\Vert H\Vert_\infty |t-s|^{1/2}}\;.
\end{equation*}
Since $\alpha\geq 1$, then $\<\!\<H,H\>\!\>_\alpha \leq \<\!\<H,H\>\!\>_1$. As a consequence of Proposition \ref{uniq_sobolev}, the function $\rho^\alpha$ satisfies
\begin{equation*}
 \<\!\<\rho^{\alpha},\partial_u H\>\!\>-2\<\!\<H,H\>\!\>_{1}\leq K_0\,,
\end{equation*}
for all $H\in C^{\,0,1}([0,T]\times\bb T)$. Thus, by Proposition \ref{lema3Wa} we conclude that
\begin{equation*}
 \begin{split}
& \int_0^T(\partial_u\rho^\alpha_r(0))^2\,dr\leq 8K_0\,,
\end{split}
\end{equation*}
from where we get that
$$|R_3|\leq{(8K_0)^{1/2}\,2\Vert H\Vert_\infty |t-s|^{1/2}}.$$

Analogously, by the Cauchy-Schwarz inequality, Proposition \ref{uniq_sobolev} and Proposition \ref{lema3Wa},
$$|R_1|\leq{(8K_0)^{1/2}\,2\Vert \partial_u H\Vert_\infty |t-s|^{1/2}}\,.$$

Finally, $R_2$
can be easily bounded from above by $\Vert  \partial_r H\Vert_\infty |t-s|$.

The case $\alpha<1$ is easier. Since to estimate $R_1$ and $R_2$ we did not impose any restriction on $\alpha$,
it remains to estimate $R_3$, which is  bounded from above by $4\Vert H\Vert_\infty |t-s|$.

To complete the bounds, notice that $ |t-s|\leq{T^{1/2} |t-s|^{1/2}}$, which is true because $0\leq s,t\leq T$.

\end{proof}

Now, we analyze the limit of the terms in the integral equation \eqref{eqintbyparts} along a subsequence $\rho^{\alpha_n}$.
Provided by the next proposition, we will be able to replace $\rho_t^{\alpha_n}$ by its  $L^2$-limit in the first, second and fourth terms of the integral equation \eqref{eqintbyparts}.
In fact, we will need to take the limit along a subsequence of $\alpha_n$. However, since we aim the uniqueness of limit points, this is not a problem.
\begin{proposition}\label{lemmaintegrais0}
 Suppose that $\rho^{\alpha_n}$ converges to $\rho^*$ in $L^2([0,T]\times [0,1])$, as  $n\to\infty$.
Then, there exists a function $\tilde{\rho}$ such that $\rho^*=\tilde{\rho}$ almost surely and   $t\mapsto\<\,\tilde{\rho}_t,\,H_t\,\>$ is a continuous map.
Moreover, there exists a subsequence $n_j$ such that
\begin{equation*}
 \lim_{j\to\infty} \<\,\rho^{\alpha_{n_j}}_t,\,H_t\,\>\,=\,\<\,\tilde{\rho}_t,\,H_t\,\>\,,
\end{equation*}
for all $t\in[0,T]$ and for all $H\in\A$.
\end{proposition}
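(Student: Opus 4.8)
The plan is to read Proposition \ref{novolema} as an \emph{equicontinuity} statement and run an Arzel\`a--Ascoli argument, upgraded by a diagonal extraction to a single subsequence that works for every test function simultaneously. Fix $H\in\A$ and set $f^H_n(t):=\<\rho^{\alpha_n}_t,H_t\>$. Since $0\le\rho^{\alpha_n}\le 1$ we have $\|\rho^{\alpha_n}_t\|_{L^2[0,1]}\le 1$, so $|f^H_n(t)|\le\|H\|_\infty$ uniformly in $n$ and $t$, and Proposition \ref{novolema} gives $|f^H_n(t)-f^H_n(s)|\le C^T_H|t-s|^{1/2}$ with $C^T_H$ independent of $n$. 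Hence $\{f^H_n\}_n$ is uniformly bounded and equicontinuous, so it is relatively compact in $C([0,T])$ by Arzel\`a--Ascoli.

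To obtain one subsequence serving all $H$, I would use that the compact metric space $[0,T]\times[0,1]$ makes $C([0,T]\times[0,1])$ separable in the uniform norm; consequently $\A$, as a metric subspace, contains a countable $\|\cdot\|_\infty$-dense family $\{H^{(k)}\}_{k}$. A diagonal extraction produces a subsequence $n_j$ along which $f^{H^{(k)}}_{n_j}$ converges uniformly on $[0,T]$ for every $k$. For arbitrary $H\in\A$ and $\eps>0$, choose $H^{(k)}$ with $\|H-H^{(k)}\|_\infty<\eps$; by linearity and the bound $|\<\rho^{\alpha_{n_j}}_t,H_t-H^{(k)}_t\>|\le\|H-H^{(k)}\|_\infty$ (again using $0\le\rho\le 1$), the sequence $f^H_{n_j}$ is uniformly Cauchy in $C([0,T])$, hence converges uniformly to some $g_H\in C([0,T])$. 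In particular $\lim_j f^H_{n_j}(t)$ exists for every $t\in[0,T]$ and every $H\in\A$.

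It remains to produce $\tilde\rho$. For each fixed $t$, the sequence $\{\rho^{\alpha_{n_j}}_t\}_j$ is bounded in $L^2[0,1]$, and by the previous paragraph applied to time-independent test functions $\phi\in C^2[0,1]\subset\A$ the inner products $\<\rho^{\alpha_{n_j}}_t,\phi\>$ converge; since $C^2[0,1]$ is dense in $L^2[0,1]$, this forces weak $L^2[0,1]$-convergence of $\rho^{\alpha_{n_j}}_t$ to a limit $\tilde\rho_t$, with $\|\tilde\rho_t\|_{L^2[0,1]}\le 1$. By weak convergence $\<\tilde\rho_t,H_t\>=\lim_j\<\rho^{\alpha_{n_j}}_t,H_t\>=g_H(t)$, so $t\mapsto\<\tilde\rho_t,H_t\>$ is continuous and the asserted convergence holds for all $t$ and $H$. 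To identify $\tilde\rho$ with $\rho^*$, I would note that for time-independent $\phi$ the $L^2([0,T]\times[0,1])$-convergence $\rho^{\alpha_{n_j}}\to\rho^*$ yields $\<\rho^{\alpha_{n_j}}_\cdot,\phi\>\to\<\rho^*_\cdot,\phi\>$ in $L^1([0,T])$ (Cauchy--Schwarz in space and then in time), while the same quantity converges to $g_\phi$ uniformly; hence $g_\phi=\<\rho^*_\cdot,\phi\>$ a.e., i.e.\ $\<\tilde\rho_t,\phi\>=\<\rho^*_t,\phi\>$ for a.e.\ $t$. Running this over a countable $L^2[0,1]$-dense family of $\phi$'s and intersecting the exceptional null sets gives $\tilde\rho_t=\rho^*_t$ for a.e.\ $t$, whence $\tilde\rho=\rho^*$ almost everywhere.

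Beyond the routine Arzel\`a--Ascoli and weak-compactness steps, the main obstacle is arranging a single subsequence $n_j$ that simultaneously controls the uncountable family $\{H\in\A\}$ and the time variable. This is exactly what the combination of separability of $\A$ in the uniform norm, diagonalization over a countable dense family, and the uniform bound $|\<\rho^{\alpha_{n_j}}_t,H_t-H^{(k)}_t\>|\le\|H-H^{(k)}\|_\infty$ resolves, the last estimate being available precisely because the solutions take values in $[0,1]$.
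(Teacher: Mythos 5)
Your proposal is correct and follows essentially the same route as the paper: equicontinuity from Proposition \ref{novolema}, Arzel\`a--Ascoli, a diagonal extraction over a countable dense subset of $\A$, a density/Riesz representation argument in $L^2[0,1]$ to produce $\tilde\rho_t$, and identification with $\rho^*$. The only difference is one of exposition: you spell out the extension to all $H\in\A$ via the sup-norm bound $|\<\rho_t,G\>|\le\|G\|_\infty$ and the almost-everywhere identification $\tilde\rho=\rho^*$ (which the paper compresses into ``uniqueness of the limit''), both of which are faithful elaborations of the paper's argument rather than a different method.
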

\begin{proof}
% Every convergent sequence in $L^2$ has a convergent subsequence in the almost sure sense. Let $\rho^{\alpha_{n_j}}$ be that subsequence convergent also in the almost sure sense.
% By the Cauchy-Schwarz inequality,
% \begin{equation*}
% \int_0^T|\<\,\rho^{\alpha_n}_t-\rho^{*}_t,\,H_t\,\>|\,dt\leq{\|\rho_t^{\alpha_n}-\rho^*_t\|\,\|H_t\|}
% \end{equation*}
% \vspace{2cm}

% Putting together the Cauchy-Schwarz inequality, which gives that
% \begin{equation*}
% |\<\,\rho^{\alpha_n}_t,\,H_t\,\>\,-\,\<\,\rho^{*}_t,\,H_t\,\>|\leq{\|\rho_t^{\alpha_n}-\rho^*_t\|\,\|H_t\|}
% \end{equation*}
%  with the convergence $ t\in[0,T]$ almost surely of $\rho^{\alpha_n}_t$ to $\rho^{*}_t$ in $L^2[0,1]$, as $n\to\infty$, granted by the Dominated Convergence Theorem, the proof of the first statement finishes.

For $H\in\A$ and  $n\in \bb N$ consider the function
 \begin{equation*}
 \begin{split}
 f_n(\cdot,H):\,&[0,T]\rightarrow{\bb R}\\
&t\mapsto\<\rho^{\alpha_n}_t,\,H_t\>\,.
\end{split}
\end{equation*}
By Proposition \ref{novolema}, the sequence $\{f_n(\cdot,H)\}_{n\in\bb N}$ is uniformly H\"older, hence equicontinuous. Since
$\vert f_n(t,H)\vert\leq \Vert H\Vert_\infty$, by the Arzel\`a-Ascoli Theorem,
there exists a subsequence $n_k$, depending on $H$, such that $f_{n_k}(\cdot, H)$ converges uniformly in $t$, as $k\to\infty$, to a continuous function $f(\cdot, H)$.
% Notice that for fixed $t$, we have that $f(t,\cdot)$ is a linear operator acting on $C^2([0,1])$.

Since $\A$ is separable,
 applying a diagonal argument we can find a subsequence $n_j$
   such that the convergence above  along $n_j$ holds uniformly in $t$, for any function on a countable dense set of $\A$.
By density, the operator $f(t,\cdot)$ can be extended to a bounded linear functional in $ C^2([0,1])$ with respect to  the $L^2$ norm, which in turn can be extended to a bounded linear functional  in $L^2[0,1]$.

The  Riesz Representation Theorem implies the existence of a function $\tilde{\rho}_t\in L^2[0,1]$
such that $f(t,H)=\<\tilde{\rho}_t,H_t\>$. Notice that last equality holds for all $t\in[0,T]$.
Uniqueness of the limit ensures that $\rho^*=\tilde{\rho}$ almost surely.
% In order to finish it is enough to replace $\rho^*$ by $\tilde{\rho}$ which turns the function $t\mapsto \<\rho^*_t,H_t\>$ continuous.
\end{proof}
We point out that the hypothesis about the convergence in $L^2$  in the proposition above has no special importance. A convergence in other norm would work as well.  The $L^2$ norm plays a role indeed on the relative compactness of $\{\rho^\alpha:\,\alpha>0\}$.

The next proposition allows us to replace $\rho_t^{\alpha_n}$ by its limit as $n\to\infty$ in the third term of equation \eqref{eqintbyparts}.
\begin{proposition}\label{lemmaintegrais2}
 Suppose that $\rho^{\alpha_n}$ converges to $\rho^*$ in $L^2([0,T]\times [0,1])$.
Then, for all $t\in[0,T]$ and for all $H\in \A$,
\begin{equation*}
\lim_{n\to\infty}\int_0^t\<\,\partial_u\rho^{\alpha_n}_s,\,\partial_uH_s\,\>\,ds\,=\, \int_0^t\<\,\partial_u\rho^{*}_s,\,\partial_uH_s\,\>\,ds\,.
\end{equation*}
\end{proposition}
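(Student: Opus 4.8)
The plan is to exploit weak compactness of the derivatives in $L^2$ together with the closedness of the weak-derivative operation, rather than to attempt a direct passage to the limit. The latter would fail: strong $L^2$ convergence $\rho^{\alpha_n}\to\rho^*$ carries, a priori, no information whatsoever about the convergence of the associated derivatives. So the argument must be indirect.

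First I would record that, by Proposition \ref{propositL1}, the family $\{\rho^\alpha:\alpha>0\}$ is bounded in $L^2(0,T;\mc H^1)$; in particular the sequence of weak space derivatives $\{\partial_u\rho^{\alpha_n}\}_{n\in\bb N}$ is bounded in the Hilbert space $L^2([0,T]\times[0,1])$. Consequently every subsequence admits a further subsequence that converges weakly in $L^2([0,T]\times[0,1])$ to some limit $g$.

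The heart of the argument is to identify each such weak limit $g$ as $\partial_u\rho^*$. Fix $H\in C_c$. By the definition of the weak derivative of $\rho^{\alpha_n}\in L^2(0,T;\mc H^1)$ (and a routine approximation of the $C^1$ compactly supported slices $H_s$ by $C^\infty$ compactly supported functions, so that Definition \ref{Sobolevdefinition} applies), one has
\begin{equation*}
\int_0^T\<\partial_u\rho^{\alpha_n}_s,\,H_s\>\,ds \;=\; -\int_0^T\<\rho^{\alpha_n}_s,\,\partial_u H_s\>\,ds\,.
\end{equation*}
Along the chosen subsequence the left-hand side tends to $\int_0^T\<g_s,H_s\>\,ds$ by weak convergence, while the right-hand side tends to $-\int_0^T\<\rho^*_s,\partial_u H_s\>\,ds$ by the strong $L^2$ convergence $\rho^{\alpha_n}\to\rho^*$. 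By Proposition \ref{propositL2} we already know $\rho^*\in L^2(0,T;\mc H^1)$, so the last integral equals $\int_0^T\<\partial_u\rho^*_s,H_s\>\,ds$. Since $C_c$ is dense in $L^2([0,T]\times[0,1])$, this forces $g=\partial_u\rho^*$. As the weak limit is thus the same along every subsequence, the entire sequence $\partial_u\rho^{\alpha_n}$ converges weakly to $\partial_u\rho^*$ in $L^2([0,T]\times[0,1])$.

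Finally, since $H\in\A$ its spatial derivative $\partial_u H$ is continuous, hence bounded, on the compact set $[0,T]\times[0,1]$, so the function $\mathbf{1}_{[0,t]}(s)\,\partial_u H_s(u)$ belongs to $L^2([0,T]\times[0,1])$. Testing the weak convergence just established against this fixed function yields at once
\begin{equation*}
\int_0^t\<\partial_u\rho^{\alpha_n}_s,\,\partial_u H_s\>\,ds \;\longrightarrow\; \int_0^t\<\partial_u\rho^*_s,\,\partial_u H_s\>\,ds\,,
\end{equation*}
which is the claim, uniformly in the choice of $t\in[0,T]$ and $H\in\A$. The genuinely delicate point is the identification step, for which it is essential to have Proposition \ref{propositL2} in hand so that $\partial_u\rho^*$ exists as the object against which the weak limit of the derivatives is matched.
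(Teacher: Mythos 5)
Your proof is correct, and it takes a genuinely different route from the paper's. The paper argues directly: it first passes to the limit for test functions compactly supported in $[0,t]\times(0,1)$, where the integration by parts $\int_0^t\<\partial_u\rho^{\alpha_n}_s,H_s\>ds=-\int_0^t\<\rho^{\alpha_n}_s,\partial_u H_s\>ds$ plus strong $L^2$ convergence settle the matter, and then extends to general $H\in\A$ by an explicit two-parameter approximation (a spatial truncation $H^\eps$ with compact support and a time cutoff $\varphi^\delta$ flattening $H$ near $s=t$), estimating the five resulting error terms via Cauchy--Schwarz and the uniform energy bound $\int_0^T\Vert\partial_u\rho^{\alpha_n}_s\Vert^2ds\le 8K_0$ (applied to both $\rho^{\alpha_n}$ and $\rho^*$), and finally sending $n\to\infty$ and then $\eps,\delta\to 0$. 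You instead use the same uniform bound to extract weak $L^2([0,T]\times[0,1])$ limits of the derivatives, identify every subsequential weak limit as $\partial_u\rho^*$ by testing against $C_c$ (this is where the strong convergence of $\rho^{\alpha_n}$ and Proposition \ref{propositL2} enter, exactly as in the paper), upgrade to weak convergence of the full sequence by the subsequence principle, and conclude by testing against ${\bf 1}_{[0,t]}(s)\,\partial_u H_s(u)\in L^2$. Your argument is cleaner --- no cutoffs, no error bookkeeping --- and it proves something formally stronger, namely $\partial_u\rho^{\alpha_n}\rightharpoonup\partial_u\rho^*$ weakly in $L^2([0,T]\times[0,1])$, so the conclusion holds against any fixed $L^2$ test function, not only those of the form ${\bf 1}_{[0,t]}\partial_u H$; the price is an appeal to weak sequential compactness in Hilbert space, whereas the paper's proof is quantitative and self-contained, using only Cauchy--Schwarz and the Riesz representation machinery already set up in the Appendix. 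One small caveat: your closing claim that the convergence holds ``uniformly in the choice of $t\in[0,T]$ and $H\in\A$'' does not follow from weak convergence, which is per test function; but the proposition only asserts convergence for each fixed pair $(t,H)$, so this overstatement is immaterial to the correctness of the proof.
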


\begin{proof}
If $H\in\A$, then $\partial_uH$ belongs to the set $C^{\,1,1}([0,T]\times[0,1])$. For this reason, the proof is written in terms of functions belonging to this last domain.

Fix a time $t$.
Consider first $H\in C^{\,0,1}([0,T]\times[0,1])$ compactly supported in $[0,t]\times(0,1)$.
  In this case,
\begin{equation*}
\int_0^t\<\,\partial_u\rho^{\alpha_n}_s,\,H_s\,\>\,ds=- \int_0^t\<\,\rho^{\alpha_n}_s,\,\partial_u H_s\,\>\,ds\,.
\end{equation*}
because the integrands above vanish for times greater than $t$.
Since $\rho^{\alpha_n}$ converges to $\rho^*$ in $L^2([0,T]\times [0,1])$, the previous equality shows that
% \begin{equation*}
%  \lim_{n\to\infty}\int_0^t\<\,\rho^{\alpha_n}_s,\,\partial_u G_s\,\>\,ds=\int_0^t\<\,\rho^{*}_s,\,\partial_u G_s\,\>\,ds=-\int_0^t\<\,\partial_u\rho^{*}_s,\, G_s\,\>\,ds\,,
% \end{equation*}
% Then, we obtain
\begin{equation*}
 \lim_{n\to\infty}\int_0^t\<\,\partial_u\rho^{\alpha_n}_s,\,H_s\,\>\,ds=\int_0^t\<\,\partial_u\rho^{*}_s,\, H_s\,\>\,ds\,.
\end{equation*}
The next step is to extend the previous equality to functions without that condition on the support.
 Let $H\in C^{1,1}([0,T]\times[0,1])$ and approximate this function  in $L^2([0,T]\times[0,1])$   by a
function $H^\eps\in C^{1,1}([0,T]\times[0,1])$ with compact support in $[0,T]\times(0,1)$ and such that
$\Vert H^\eps\Vert_\infty\leq\Vert H\Vert_\infty$.
For $\delta>0$, let us define the function $\varphi^\delta:[0,T]\to\bb R$ as
\begin{equation*}
 \varphi^\delta(s)= \left\{
\begin{array}{ll}
1,& \mbox{if}\,\,s\in[0,t-\delta]\,, \\
\displaystyle\frac{t-s}{\delta},& \mbox{if}\,\,s\in[t-\delta,t]\,, \\
0, & \mbox{if}\,\,s\in[t, T]\,.\\
\end{array}
\right.
\end{equation*}
Let $H^{\eps,\delta}_s(u):=H^\eps_s(u)\varphi^\delta(s)$. Then,
$ H^{\eps,\delta}\in C^{\,0,1}([0,T]\times[0,1])$ and has  compact support contained in $[0,t]\times(0,1)$.
Hence, from what we proved above,
\begin{equation}\label{imp}
\begin{split}
\lim_{n\to \infty} \int_0^t\<\, \partial_u\rho^{\alpha_n}_s, \,H^{\eps,\delta}_s\,\>\, ds
=
 \int_0^t\<\, \partial_u\rho^{*}_s, \, H^{\eps,\delta}\,\>\, ds\,.
\end{split}
\end{equation}
By the triangular inequality,
\begin{equation}\label{eqintbyparts*}
\begin{split}
\Big\vert\!\int_0^t\!\< \partial_u\rho^{\alpha_n}_s-\partial_u\rho^{*}_s, H_s\>ds&\,\Big\vert\!\leq{\Big\vert\!\int_0^t\!\< \partial_u\rho^{\alpha_n}_s, H_s-H^{\eps}_s\> ds\Big\vert}+\Big\vert\!\int_0^t\!\< \partial_u\rho^{\alpha_n}_s,H^{\eps}_s-H_s^{\eps,\delta}\> ds\Big\vert\\
&+\Big\vert\!\int_0^t\!\<\partial_u\rho^{\alpha_n}_s-\partial_u\rho^{*}_s, H^{\eps,\delta}_s\> ds\Big\vert+\Big\vert \!\int_0^t\!\<\partial_u\rho^{*}_s, H_s^{\eps,\delta}-H^{\eps}_s\> ds\Big\vert\\
&+
\Big\vert\!\int_0^t\<\partial_u\rho^{*}_s, H^{\eps}_s-H_s\> ds\Big\vert\,.
\end{split}
\end{equation}
We shall estimate each term on the right hand side of the previous inequality. We start by the first one.
By the Cauchy-Schwarz inequality,
\begin{equation*}
\begin{split}
&\Big\vert\int_0^t\!\!\<\, \partial_u\rho^{\alpha_n}_s, H_s-H^{\eps}_s\,\>\, ds\,\Big\vert
\leq\Big(\int_0^t\!\!\Vert \partial_u\rho^{\alpha_n}_s\Vert^2\, ds\Big)^{1/2}
\Big(\int_0^t\!\!\Vert H_s-H^{\eps}_s\Vert^2\, ds\Big)^{1/2}.\\
\end{split}
\end{equation*}
We notice that by Proposition \ref{uniq_sobolev}, $\rho^{\alpha_n}$ satisfies
\begin{equation*}
 \int_0^t\<\rho^{\alpha_n}_s,\,\partial_u G_s\,\>\,ds-2\int_0^t\<G_s,\,G_s\,\>\,ds\leq K_0\,,
\end{equation*}
 for all $H\in C_c$, see Definition \ref{def9}, and any $t\in{[0,T]}$. This  together with Corollary \ref{lema3} ensures that $\int_0^t\Vert \partial_u\rho^{\alpha_n}_s\Vert^2\, ds\leq 8K_0\,.$
 Thus,
\begin{equation*}%\label{eqintbyparts*}
\begin{split}
&\Big\vert\int_0^t\<\, \partial_u\rho^{\alpha_n}_s, \,H_s-H^{\eps}_s\,\>\, ds\Big\vert
\leq(8K_0)^{1/2}
\Big(\int_0^t\Vert H_s-H^{\eps}_s\Vert^2\, ds\Big)^{1/2} .\\
\end{split}
\end{equation*}
By Proposition \ref{propositL2}, the same holds for $\rho^*$, i.e.,
$\int_0^t\Vert \partial_u\rho^{*}_s\Vert^2\, ds\leq 8K_0$, hence the previous inequality follows replacing $\rho^{\alpha_n}$ by $\rho^*$. With this we also estimated the last term on the right hand side of the previous inequality.
Now we estimate the second term on the right hand side of \eqref{eqintbyparts*}. Observe that
\begin{equation*}
\begin{split}
 \Big\vert\int_0^t\<\, \partial_u\rho^{\alpha_n}_s, \,H_s^{\eps,\delta}-H^{\eps}_s\,\>\, ds\Big\vert&=
 \Big\vert\int_0^t\int_{0}^1  \partial_u\rho^{\alpha_n}_s(u)[H^\eps_s(u)\varphi^\delta(s)-H^{\eps}_s(u)]\,du\, ds\Big\vert\\
&=\Big\vert\int_{t-\delta}^t(\pfrac{t-s}{\delta}-1)\int_{0}^1  \partial_u\rho^{\alpha_n}_s(u)\,H^\eps_s(u)\,du\,ds\Big\vert\\
&\leq  {\int_{0}^T {\bf{1}}_{[t-\delta,t]}(s)
\Big\vert\int_{0}^1  \partial_u\rho^{\alpha_n}_s(u)\,H^\eps_s(u)\,du\Big\vert\,\,ds}\,.
\end{split}
\end{equation*}
By the Cauchy-Schwarz inequality we obtain that
\begin{equation*}
\begin{split}
\Big\vert\int_0^t\<\, \partial_u\rho^{\alpha_n}_s, \,H_s^{\eps,\delta}-H^{\eps}_s\,\>\, ds\Big\vert\leq&  \sqrt{\delta}\Big(\int_{0}^T\Big\vert
\int_{0}^1  \partial_u\rho^{\alpha_n}_s(u)\,H^\eps_s(u)\,du\Big\vert^2\,\,ds\Big)^{1/2}\\
\leq &\sqrt{\delta}\Vert H^\eps\Vert_{\infty}\Big(\int_{0}^T\Big\vert
\int_{0}^1  \partial_u\rho^{\alpha_n}_s(u)\,du\Big\vert^2\,\,ds\Big)^{1/2}\\
\leq& \sqrt{\delta}\Vert H\Vert_{\infty}(8K_0)^{1/2}\,.\\
\end{split}
\end{equation*}
By analogous calculations, we also get the previous estimate replacing $\rho^{\alpha_n}$ by $\rho^*$. Therefore we also estimated the fourth term on the right hand side of \eqref{eqintbyparts*}.

Putting together the previous computations, we obtain that the left hand side of \eqref{eqintbyparts*} is bounded from above by
%%%%%%%%%%%%%%%%%%%%%%%%%%%%%%%%%%%%%%%%%%%%%%%%%%%%%%%%%%%%%%%%%%%%%%%%
\begin{equation*}
\Big\vert\int_0^t\<\, \partial_u\rho^{\alpha_n}_s- \partial_u\rho^{*}_s, \, H^{\eps,\delta}\,\>\, ds\,\Big\vert
 +2(8K_0)^{1/2}\Big\{\Big(\int_0^t\Vert H_s-H^{\eps}_s\Vert^2 ds\Big)^{1/2}  \!\!\!+ \!\sqrt{\delta}\Vert H\Vert_{\infty} \Big\}\,.
\end{equation*}
%%%%%%%%%%%%%%%%%%%%%%%%%%%%%%%%%%%%%%%%%%%%%%%%%%%%%%%%%%%%%%%%%%%%%%%%%%
% \begin{equation*}%\label{eqintbyparts*}
% \begin{split}
% \Big\vert\int_0^t\< \partial_u\rho^{\alpha_n}_s, G_s\> ds
% - \int_0^t\< \partial_u\rho^{*}_s,  G_s\> ds\Big\vert\leq &\, 2(8K_0)^{1/2}\Big\{\Big(\int_0^t\Vert G_s-G^{\eps}_s\Vert_{L^2(\bb T)}^2 ds\Big)^{1/2}  \!\!\!+ \!\sqrt{\delta}\Vert G\Vert_{\infty} \Big\}\\
% &+\Big\vert\int_0^t\<\, \partial_u\rho^{\alpha_n}_s, \,G^{\eps,\delta}_s\,\>\, ds
% -\int_0^t\<\, \partial_u\rho^{*}_s, \, G^{\eps,\delta}\,\>\, ds\Big\vert\,.\\
% \end{split}
% \end{equation*}
Employing \eqref{imp}, recalling  the definition of $H_\varepsilon$, sending  $n\to\infty$, and then $\varepsilon,\delta$ to zero,   the proof ends.
\end{proof}

Finally, in the next two propositions we are able to identify the integral equations for the limit of $\rho^{\alpha_n}$ when $\alpha_n\to 0$ or $\alpha_n\to \infty$ by treating the last term of the integral equation \eqref{eqintbyparts}. We start by showing that the limit of $\rho^{\alpha_n}$ when $\alpha_n\to 0$ is a weak solution of the heat equation with Neumann's  boundary conditions.
\begin{proposition}\label{lemmapdePT1}
Let $\{\alpha_n\}_{n\in \bb N}$ be a sequence of positive real numbers such that $\lim_{n\to\infty} \alpha_n\;=\;0\,.$
If $\{\rho^{\alpha_n}\}_{n\in\bb N}$ converges to $\rho^*$ in $L^2([0,T]\times [0,1])$,  then
$\rho^*$ is the unique weak solution of \eqref{hen}.
\end{proposition}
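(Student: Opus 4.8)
The plan is to pass to the limit $n\to\infty$ in the integrated-by-parts integral equation \eqref{eqintbyparts} satisfied by each $\rho^{\alpha_n}$, and to recognize the limiting identity as the weak formulation of the Neumann problem \eqref{hen}. Fix an arbitrary $H\in\A$ and $t\in[0,T]$. By Proposition \ref{lemmaintegrais0} there is a subsequence $n_j$, along which I work from now on, such that $\<\rho^{\alpha_{n_j}}_t,H_t\>\to\<\rho^*_t,H_t\>$ for every $t$; since all the $\rho^{\alpha_n}$ share the same initial datum, $\<\rho^{\alpha_n}_0,H_0\>=\<\rho_0,H_0\>$ is constant in $n$. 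The term $\int_0^t\<\rho^{\alpha_n}_s,\partial_sH_s\>\,ds$ converges to $\int_0^t\<\rho^*_s,\partial_sH_s\>\,ds$ directly from the $L^2([0,T]\times[0,1])$ convergence $\rho^{\alpha_n}\to\rho^*$, while the gradient term $\int_0^t\<\partial_u\rho^{\alpha_n}_s,\partial_uH_s\>\,ds\to\int_0^t\<\partial_u\rho^*_s,\partial_uH_s\>\,ds$ by Proposition \ref{lemmaintegrais2}.

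The only remaining, and decisive, term is the boundary contribution $\int_0^t\alpha_n(\rho^{\alpha_n}_s(0)-\rho^{\alpha_n}_s(1))(H_s(0)-H_s(1))\,ds$. Here I would use that $\rho^{\alpha_n}$ takes values in $[0,1]$, so that its lateral limits (well defined since $\rho^{\alpha_n}_s\in\mc H^1$ is absolutely continuous) also lie in $[0,1]$ and hence $|\rho^{\alpha_n}_s(0)-\rho^{\alpha_n}_s(1)|\leq 1$. Consequently this term is bounded in modulus by $2\alpha_n T\Vert H\Vert_\infty$, which tends to zero as $\alpha_n\to 0$. This is precisely where the hypothesis $\alpha_n\to 0$ is used: the explicit prefactor $\alpha_n$ forces the boundary term to vanish in the limit, rather than persisting as a Robin-type coupling.

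Collecting these limits, $\rho^*$ satisfies
\begin{equation*}
\<\rho^*_t,H_t\>-\<\rho_0,H_0\>+\int_0^t\<\partial_u\rho^*_s,\partial_uH_s\>\,ds-\int_0^t\<\rho^*_s,\partial_sH_s\>\,ds=0
\end{equation*}
for every $H\in\A$ and every $t\in[0,T]$. Since $\rho^*\in L^2(0,T;\mc H^1)$ by Proposition \ref{propositL2}, the integration by parts $\int_0^t\<\rho^*_s,\Delta H_s\>\,ds=-\int_0^t\<\partial_u\rho^*_s,\partial_uH_s\>\,ds+\int_0^t(\rho^*_s(1)\partial_uH_s(1)-\rho^*_s(0)\partial_uH_s(0))\,ds$ is justified, and substituting it turns the displayed identity into exactly the weak formulation \eqref{eqint3} of the Neumann problem. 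Thus $\rho^*$ is a weak solution of \eqref{hen}, and by uniqueness of such weak solutions it coincides with $\rho^0$, which finishes the proof.

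I would emphasize that the genuine difficulty of the phase-transition argument lies in the companion case $\alpha_n\to\infty$ of Proposition \ref{lemmapdePT2}, where the prefactor $\alpha_n$ amplifies the boundary coupling and a finer analysis is required. In the present case $\alpha_n\to 0$ the mere boundedness of $\rho^{\alpha_n}$ suffices to annihilate the boundary term, so the only real content here is the careful assembly of the convergences established in Propositions \ref{lemmaintegrais0}, \ref{lemmaintegrais2} and \ref{propositL2}, together with the equivalence of the two weak formulations via integration by parts.
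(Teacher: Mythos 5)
Your proposal is correct and follows essentially the same route as the paper's proof: pass to the limit in \eqref{eqintbyparts} via Propositions \ref{lemmaintegrais0} and \ref{lemmaintegrais2}, annihilate the boundary term with the elementary bound of order $\alpha_n$ coming from $0\leq\rho^{\alpha_n}\leq 1$, and then integrate by parts using $\rho^*\in L^2(0,T;\mc H^1)$ from Proposition \ref{propositL2} before invoking uniqueness of weak solutions of \eqref{hen}. Even your handling of the subsequence from Proposition \ref{lemmaintegrais0} matches the paper's remark that this causes no loss since the limit point is unique.
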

\begin{proof}
 Proposition \ref{propositL2} says that  $\rho^*\in L^2(0,T;\mc H^1)$, which is one of the conditions in Definition \ref{heat equation Neumann}.

 In order to prove that $\rho^*$ satisfies \eqref{eqint3}, the idea is to take the limit as  $n\to\infty$ in  \eqref{eqintbyparts} and to analyze the limiting terms.
 By the previous propositions, it only remains to analyze the limit of the last term in the integral equation \eqref{eqintbyparts}.

A simple computation shows that for $t\in{[0,T]}$:
\begin{equation*}
\Big\vert\int_0^t \alpha_n (\rho^{\alpha_n}_s(0)-\rho^{\alpha_n}_s(1))(H_s(0)-H_s(1))\,ds\Big\vert\;\leq
\; 4 T \Vert H\Vert_\infty \, \alpha_n\,,
\end{equation*}
therefore, when $\alpha_n\to 0$, last integral in \eqref{eqintbyparts}  converges to zero, as $n\to\infty$.
Therefore, replacing $\rho^\alpha$ by $\rho^{\alpha_n}$ in \eqref{eqintbyparts} and taking the limit,  we  conclude that $\rho^*$ satisfies:
\begin{equation*}%\label{eqintbyparts*}
\< \,\rho^*_t,\,H_t\,\> -\<\,\rho_0,\,H_0\,\>+ \int_0^t\<\, \partial_u\rho^*_s, \, \partial_u  H_s\,\>\, ds-
\int_0^t\<\, \rho^*_s, \, \partial_s H_s\,\>\, ds\;=\;0\;,
\end{equation*}
for all $t\in [0,T]$  and  for all $H\in \A$.

Since $\rho^*\in L^2(0,T;\mc H^1)$, performing an integration by parts in the previous equation, we get to:
\begin{equation*}
\<\rho^*_t,H_t\>\! -\!\<\rho_0,H_0\>- \!\int_0^t\!\< \rho^*_s,\Delta  H_s+\partial_s H_s\>ds\\
-\int_0^t\!\!\!\!(\rho^*_s(0)\partial_uH_s(0)-\rho^*_s(1)\partial_uH_s(1))ds\!\!=\!\!0.
\end{equation*}
for all $t\in [0,T]$  and  for all $H\in\A$, concluding the proof.
\end{proof}

In next proposition we treat the last term of the integral equation \eqref{eqintbyparts}.
\begin{proposition}\label{lemmapdePT2}
Let $\{\alpha_n\}_{n\in \bb N}$ be a sequence of positive real numbers such that $\lim_{n\to\infty} \alpha_n\;=\;\infty\,.$
If  $\{\rho^{\alpha_n}\}_{n\in\bb N}$ converges to $\rho^*$ in $L^2([0,T]\times [0,1])$, then
$\rho^*$ is the unique weak solution of \eqref{he}.
\end{proposition}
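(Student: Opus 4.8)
The plan is to pass to the limit $n\to\infty$ in the integral identity \eqref{eqintbyparts} restricted to periodic test functions, and then to show that the limit $\rho^*$ collapses that identity to the periodic weak formulation \eqref{eqint1}. Two features drive the argument. First, a periodic test function annihilates the last (Robin) term of \eqref{eqintbyparts}, since it satisfies $H_s(0)=H_s(1)$. Second, the divergence of $\alpha_n$ forces the limit $\rho^*$ to take equal values at the two endpoints, so that $\rho^*$ may legitimately be regarded as a function on the torus $\bb T$. Proposition \ref{propositL2} already gives $\rho^*\in L^2(0,T;\mc H^1)$, which is what legitimizes the integrations by parts below.

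The heart of the proof is the identity $\rho^*_s(0)=\rho^*_s(1)$ for almost every $s\in[0,T]$. To obtain it I would first combine the Robin boundary condition with the energy estimate. For $\alpha_n\geq 1$ the boundary condition in \eqref{her} reads $\partial_u\rho^{\alpha_n}_s(0)=\alpha_n(\rho^{\alpha_n}_s(0)-\rho^{\alpha_n}_s(1))$, while Proposition \ref{uniq_sobolev} together with Proposition \ref{lema3Wa} yields $\int_0^T(\partial_u\rho^{\alpha_n}_s(0))^2\,ds\leq 8K_0$, exactly as in the proof of Proposition \ref{novolema}. Dividing by $\alpha_n^2$ gives
\begin{equation*}
\int_0^T(\rho^{\alpha_n}_s(0)-\rho^{\alpha_n}_s(1))^2\,ds\;\leq\;\frac{8K_0}{\alpha_n^2}\;\longrightarrow\;0\,,
\end{equation*}
as $n\to\infty$. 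To transfer this vanishing jump to $\rho^*$, I would test against $H_s(u)=\phi(s)\,u$ with $\phi\in C^1([0,T])$, which belongs to $\A$ and satisfies $\partial_uH_s(u)=\phi(s)$. Since each $\rho^{\alpha_n}_s$ and $\rho^*_s$ lie in $\mc H^1$, hence are absolutely continuous, the fundamental theorem of calculus gives $\int_0^1\partial_u\rho_s(u)\,du=\rho_s(1)-\rho_s(0)$, so Proposition \ref{lemmaintegrais2} (with $t=T$) yields
\begin{equation*}
\int_0^T\phi(s)\big(\rho^{\alpha_n}_s(1)-\rho^{\alpha_n}_s(0)\big)\,ds\;\longrightarrow\;\int_0^T\phi(s)\big(\rho^*_s(1)-\rho^*_s(0)\big)\,ds\,.
\end{equation*}
By Cauchy--Schwarz the left-hand side is bounded by $\Vert\phi\Vert_{L^2[0,1]}\,(8K_0)^{1/2}/\alpha_n\to 0$, so the right-hand side vanishes for every $\phi\in C^1([0,T])$, forcing $\rho^*_s(0)=\rho^*_s(1)$ for almost every $s$.

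With periodicity in hand I would take any $H\in C^{1,2}([0,T]\times\bb T)$, restrict it to $[0,1]$ (so $H\in\A$, with $H_s(0)=H_s(1)$ and $\partial_uH_s(0)=\partial_uH_s(1)$), and pass to the limit in \eqref{eqintbyparts} along the subsequence supplied by Proposition \ref{lemmaintegrais0}. The first and second terms converge by Proposition \ref{lemmaintegrais0}, the gradient term by Proposition \ref{lemmaintegrais2}, and $\int_0^t\<\rho^{\alpha_n}_s,\partial_sH_s\>\,ds$ by the $L^2$ convergence of $\rho^{\alpha_n}$; the Robin term is identically zero because $H_s(0)=H_s(1)$. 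This produces
\begin{equation*}
\<\rho^*_t,H_t\>-\<\rho_0,H_0\>+\int_0^t\<\partial_u\rho^*_s,\partial_uH_s\>\,ds-\int_0^t\<\rho^*_s,\partial_sH_s\>\,ds\;=\;0\,.
\end{equation*}
Integrating the gradient term by parts, the boundary contribution equals $\int_0^t\partial_uH_s(1)\big(\rho^*_s(1)-\rho^*_s(0)\big)\,ds$, which vanishes by the periodicity just established; what remains is exactly \eqref{eqint1}. Since $\rho^*_s(0)=\rho^*_s(1)$, the function $\rho^*$ extends to a genuine function on $\bb T$ and the inner products on $[0,1]$ agree with those on $\bb T$, so $\rho^*$ is a weak solution of \eqref{he}; uniqueness of weak solutions of \eqref{he} then identifies it with $\rho^\infty$.

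The delicate point is precisely the passage from the bulk $L^2$ convergence of $\rho^{\alpha_n}$ to control on the boundary traces, since the trace maps are not continuous for the bulk $L^2$ topology. The trick that unlocks it is the representation of the endpoint jump as the bulk integral $\int_0^1\partial_u\rho_s\,du$ of the weak derivative, which is exactly the quantity governed by the convergence of gradients in Proposition \ref{lemmaintegrais2}; coupling this with the quantitative $O(\alpha_n^{-1})$ smallness of the jump coming from the energy estimate is what closes the argument. Everything else is a routine repetition of the limiting procedure already carried out for $\alpha_n\to 0$ in Proposition \ref{lemmapdePT1}.
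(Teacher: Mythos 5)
Your proof is correct, and it runs on exactly the same quantitative engine as the paper's: the uniform bound $\int_0^T(\partial_u\rho^{\alpha_n}_s(0))^2\,ds\le 8K_0$ from Proposition \ref{uniq_sobolev} and Proposition \ref{lema3Wa} (valid for $\alpha_n\ge 1$), combined with the identity $\rho^{\alpha_n}_s(0)-\rho^{\alpha_n}_s(1)=\tfrac{1}{\alpha_n}\partial_u\rho^{\alpha_n}_s(0)$ to make the endpoint jump of order $O(\alpha_n^{-1})$ in $L^2(dt)$, plus the limit Propositions \ref{lemmaintegrais0} and \ref{lemmaintegrais2} applied to periodic test functions, for which the Robin term of \eqref{eqintbyparts} is identically zero. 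Where you diverge is in the endgame. The paper never establishes $\rho^*_s(0)=\rho^*_s(1)$: it integrates by parts \emph{before} the limit, at the level of $\rho^{\alpha_n}$, rewriting $\int_0^t\<\partial_u\rho^{\alpha_n}_s,\partial_u H_s\>\,ds$ as a bulk term $\int_0^t\<\rho^{\alpha_n}_s,\Delta H_s\>\,ds$ plus the boundary term $\int_0^t(\rho^{\alpha_n}_s(0)-\rho^{\alpha_n}_s(1))\partial_uH_s(0)\,ds$, and kills the latter directly by the $O(\alpha_n^{-1})$ bound; no statement about the traces of $\rho^*$ is ever needed, because Definition \ref{def edp 1} only demands the integral identity against periodic test functions. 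You instead first transfer the vanishing jump to the limit --- your test function $H_s(u)=\phi(s)\,u$ together with Proposition \ref{lemmaintegrais2} and the representation $\rho_s(1)-\rho_s(0)=\int_0^1\partial_u\rho_s(u)\,du$ is a clean device for this, correctly circumventing the discontinuity of trace maps under bulk $L^2$ convergence --- and then integrate by parts \emph{after} the limit, with the boundary contribution annihilated by the periodicity of $\rho^*$. Your route buys a slightly stronger conclusion (a genuine trace identity, so $\rho^*$ literally lives on $\bb T$) at the cost of one extra step. One point of rigor to repair: you read the Robin condition $\partial_u\rho^{\alpha_n}_s(0)=\alpha_n(\rho^{\alpha_n}_s(0)-\rho^{\alpha_n}_s(1))$ pointwise off \eqref{her}, but a weak solution in the sense of Definition \ref{heat equation Robin} is only known to satisfy the integral identity \eqref{eqint2}; the pointwise relation holds for almost every $s$, with $\partial_u\rho^{\alpha_n}_s(0)$ understood as the $W_{\alpha_n}$-weak derivative at the atom, and this is precisely Proposition \ref{fund} --- the same result the paper invokes at this juncture in the proofs of Proposition \ref{novolema} and Proposition \ref{lemmapdePT2}. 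Citing it explicitly (and replacing $\Vert\phi\Vert_{L^2[0,1]}$ by $\Vert\phi\Vert_{L^2[0,T]}$ in your Cauchy--Schwarz step) makes your argument complete.
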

\begin{proof}
 Proposition \ref{propositL2} says that  $\rho^*\in L^2(0,T;\mc H^1)$, which is one of the conditions in Definition \ref{def edp 1}.
We shall prove that  $\rho^*$ satisfies \eqref{eqint1}. As before, the idea is to take the limit as  $n\to\infty$ in  \eqref{eqintbyparts} and to analyze the limiting terms. In this situation, we take  $H\in C^{1,2}([0,T]\times\bb T)$, so that
\eqref{eqintbyparts} is given by
\begin{equation*}
\begin{split}
& \<\rho^{\alpha_n}_t,\,H_t\,\> -\<\rho_0,\,H_0\,\>+\int_0^t\<\, \partial_u\rho^{\alpha_n}_s , \,\partial_u  H_s\,\>\, ds-
\int_0^t\< \rho^{\alpha_n}_s, \, \partial_s H_s\,\>\, ds\;=\;0\,.
\end{split}
\end{equation*}
By the first statement of Proposition \ref{lemmaintegrais0} and Proposition \ref{lemmaintegrais2}, taking the limit as $n\to \infty$ in the previous equality, we  conclude that $\rho^*$ satisfies:
\begin{equation*}
\begin{split}
& \<\rho^*_t,\,H_t\,\> -\<\rho_0,\,H_0\,\>+\int_0^t\<\, \partial_u\rho^*_s , \,\partial_u  H_s\,\>\, ds-
\int_0^t\< \rho^*_s, \, \partial_s H_s\,\>\, ds\;=\;0\;,
\end{split}
\end{equation*}
for all $t\in[0,T]$ and  for all $H\in C^{1,2}([0,T]\times\bb T)$.
To obtain the integral equation \eqref{eqint1} from the  equation above, we invoke Proposition \ref{lemmaintegrais2} and perform an integration by parts, we are lead to
\begin{equation*}
\begin{split}
\int_0^t\<\, \partial_u\rho^*_s , \,\partial_u  H_s\,\>\, ds&=
\!\!\lim_{\alpha_n\to\infty}\int_0^t\<\, \partial_u\rho^{\alpha_n}_s , \partial_u  H_s\,\>\, ds\\
&=\!\!\lim_{\alpha_n\to\infty}\!\int_0^t\<\rho^{\alpha_n}_s ,\Delta H_s\> ds -\!\!
\int_0^t\!\!\!\Big(\rho^{\alpha_n}_s(0)-\rho^{\alpha_n}_s(1)\Big)\partial_uH_s(b)\,ds.
\end{split}
\end{equation*}
We claim that the previous  limit is equal to
$\int_0^t\<\rho^{*}_s , \,\Delta H_s\,\>\, ds$. At first we prove  that
\begin{equation*}
\begin{split}
\lim_{\alpha_n\to\infty}
\int_0^t\Big(\rho^{\alpha_n}_s(0)-\rho^{\alpha_n}_s(1)\Big)\partial_uH_s(0)\,ds\,=\,0\,.
\end{split}
\end{equation*}
By the Cauchy-Schwarz inequality and Proposition \ref{fund},
\begin{equation*}
\begin{split}
\int_0^t\!\!\Big(\rho^\alpha_s(0)-\rho^\alpha_s(1)\Big)\partial_uH_s(0)\,ds\leq
\Big(\int_0^T\!\!(\partial_uH_s(0))^2\,ds\Big)^{1/2}\frac{1}{\alpha} \Big(\int_0^T\!\!(\partial_u\rho^\alpha_s(0))^2\,ds\Big)^{1/2}\!\!,
\end{split}
\end{equation*}
for all $t\in{[0,T]}$.
Without loss of generality, we can assume $\alpha\geq 1$. Thus, by  Proposition \ref{uniq_sobolev} and the fact that $\<\!\<H,H\>\!\>_{\alpha}\leq{\<\!\<H,H\>\!\>_{1}}$ because $\alpha\geq{1}$, we arrive at
\begin{equation*}
 \<\!\<\rho^{\alpha},\partial_u H\>\!\>-2\<\!\<H,H\>\!\>_{1}\leq K_0\,,
\end{equation*}
for all $H\in C^{\,0,1}([0,T]\times\bb T)$. From Proposition \ref{lema3Wa},
\begin{equation*}
 \sup_{H}\Big\{\<\!\<\rho^{\alpha},\partial_u H\>\!\>-2\<\!\<H,H\>\!\>_{1}\Big\}=
\frac{1}{8}\int_0^T\Big\{
\Vert \partial_u\rho^\alpha_s\Vert^2+(\partial_u\rho^\alpha_s(0))^2\Big\}\,ds\,.
\end{equation*}
where the supremum above is taken over functions $H\in C^{\,0,1}([0,T]\times\bb T)$, see Definition \ref{space C^n,m}.
Therefore,
\begin{equation*}
\begin{split}
\int_0^t(\rho^\alpha_s(0)-\rho^\alpha_s(1))\partial_uH_s(0)\,ds\leq \frac{1}{\alpha} (8K_0)^{1/2}
\Big(\int_0^T(\partial_uH_s(0))^2\,ds\Big)^{1/2}\,,
\end{split}
\end{equation*}
for all $t\in{[0,T]}$ and $\alpha\geq 1$.

In order to finish the proof is is enough to show that
\begin{equation*}
\lim_{\alpha_n\to\infty}\int_0^t\<\rho^{\alpha_n}_s , \,\Delta H_s\,\>\, ds=\int_0^t\<\rho^{*}_s , \,\Delta H_s\,\>\, ds,
\end{equation*}
which is  consequence of the Cauchy-Schwarz inequality.

\end{proof}

\begin{proof}[Proof of Theorem \ref{pdePT}]
As mentioned after Proposition \ref{propositL1}, the set $\{\rho^\alpha:\,\alpha>0\}$
is relatively compact in $L^2([0,T]\times [0,1])$.
Therefore, any sequence $\alpha_n\to 0$ has a subsequence $\alpha_{n_j}$ such that $\rho^{\alpha_{n_j}}$ converges to some
$\rho^*$. By Proposition \ref{propositL2}, Proposition \ref{lemmapdePT1} and from uniqueness of weak solutions of  \eqref{hen},
we conclude that $\rho^*$ is the unique weak solution of \eqref{hen}. Hence, $\lim_{\alpha\to 0}\rho^\alpha=\rho^*$. Analogously, employing Proposition \ref{lemmapdePT2},  we get that $\lim_{\alpha\to \infty}\rho^\alpha=\hat{\rho}\,,$ where $\hat{\rho}$ is the unique weak solution of  \eqref{he}.
\end{proof}

% \appendixname

\appendix
\section{Sobolev Space Tools}

We prove here some results that we  used along the paper. Most of them are suitable applications of the Riesz Representation Theorem for Hilbert spaces.

\begin{proposition}
 The set $C^{\,0,1}([0,T]\times \bb T)$ is a dense subset of  $L^2_{W_\alpha}([0,T]\times \bb T)$.
\end{proposition}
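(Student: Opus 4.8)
The plan is to exploit the explicit form of the $W_\alpha$-norm. Writing out the definition, for any $f$ one has
$$\|f\|_\alpha^2 \;=\; \int_0^T\!\!\int_{\bb T} f_s(u)^2\,du\,ds \;+\; \frac1\alpha\int_0^T f_s(0)^2\,ds\,,$$
so the norm decouples into the ordinary $L^2([0,T]\times\bb T)$ norm plus a weighted contribution of the boundary trace $s\mapsto f_s(0)$; in particular this shows $s\mapsto f_s(0)$ lies in $L^2[0,T]$ whenever $f\in L^2_{W_\alpha}$. The genuine difficulty is precisely this decoupling: the value at the single point $0$ is invisible to Lebesgue measure but carries positive weight under $W_\alpha$, so approximating $f$ well in $L^2([0,T]\times\bb T)$ does not by itself control the boundary term. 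My plan is therefore to approximate the bulk and the boundary trace \emph{separately}, and then reconcile the two with a localized correction near $0$.

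Next I would fix $f\in L^2_{W_\alpha}$ and $\eps>0$. Since $C^\infty([0,T]\times\bb T)\subset C^{\,0,1}([0,T]\times\bb T)$ is dense in $L^2([0,T]\times\bb T)$, I would choose $G\in C^{\,0,1}([0,T]\times\bb T)$ with $\int_0^T\|G_s - f_s\|_{L^2(\bb T)}^2\,ds$ small, and, independently, using density of continuous functions in $L^2[0,T]$, choose $\phi\in C([0,T])$ approximating the trace $s\mapsto f_s(0)$ in $L^2[0,T]$. Then I would take a bump $\psi_\eta\in C^\infty(\bb T)$ with $0\le\psi_\eta\le 1$, $\psi_\eta(0)=1$, supported in an $\eta$-neighborhood of $0$, so that $\|\psi_\eta\|_{L^2(\bb T)}^2\le C\eta$, and define
$$H(s,u)\;=\;G(s,u)\;+\;\big(\phi(s)-G(s,0)\big)\,\psi_\eta(u)\,.$$
Because $\psi_\eta$ is smooth in $u$ while $\phi(s)-G(s,0)$ is continuous in $s$ and constant in $u$, one gets $H\in C^{\,0,1}([0,T]\times\bb T)$, and by construction $H(s,0)=\phi(s)$ for every $s$.

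For the estimate, the boundary contribution to $\|H-f\|_\alpha^2$ is exactly $\frac1\alpha\int_0^T(\phi(s)-f_s(0))^2\,ds$, which is small by the choice of $\phi$. For the bulk contribution, writing $H_s-f_s=(G_s-f_s)+(\phi(s)-G(s,0))\psi_\eta$ and using $(a+b)^2\le 2a^2+2b^2$ gives
$$\int_0^T\!\!\|H_s-f_s\|_{L^2(\bb T)}^2\,ds \;\le\; 2\int_0^T\!\!\|G_s-f_s\|_{L^2(\bb T)}^2\,ds \;+\; 2\,\|\psi_\eta\|_{L^2(\bb T)}^2\int_0^T\big(\phi(s)-G(s,0)\big)^2\,ds\,.$$
The first term is small by the choice of $G$; in the second, the integral $\int_0^T(\phi(s)-G(s,0))^2\,ds$ is a fixed finite constant (both $\phi$ and $G(\cdot,0)$ are continuous on the compact $[0,T]$), so sending $\eta\to 0$ makes it negligible since $\|\psi_\eta\|_{L^2(\bb T)}^2\le C\eta$. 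Choosing first $G$ and $\phi$, and only then $\eta$ small, yields $\|H-f\|_\alpha<\eps$, proving density. The only delicate point, as anticipated, is this spike correction: it reconciles the independently chosen bulk approximation $G$ with the prescribed boundary value $\phi$ at the atom, while leaving the Lebesgue part of the norm essentially unchanged.
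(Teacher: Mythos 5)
Your proof is correct and follows essentially the same strategy as the paper's: approximate the Lebesgue bulk and the trace $s\mapsto f_s(0)$ separately, then reconcile them with a spike of small $L^2(\bb T)$-mass concentrated near the atom at $0$, so that the two pieces of the $W_\alpha$-norm are controlled independently. If anything, your version is slightly more careful than the paper's, which glues with the indicator $h_n(t)\,\mathbf 1_{(-\frac1n,\frac1n)}(u)$ (not $C^1$ in $u$, so its $G_n$ is strictly speaking not in $C^{\,0,1}([0,T]\times\bb T)$ without an extra smoothing step), whereas your smooth bump $\psi_\eta$ together with the correction $\big(\phi(s)-G(s,0)\big)\psi_\eta(u)$ lands exactly in the right class and matches the prescribed trace at the atom.
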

\begin{proof}
 Let $H\in L^2_{W_\alpha}([0,T]\times \bb T)$. Then, $H\in L^2([0,T]\times \bb T)$ and $H(\cdot,0)\in L^2([0,T])$. Consider a sequence $\{H_n\}_{n\in \bb N}$ such that for each $n\in{\bb N}$, $H_n\in C^{\,0,1}([0,T]\times \bb T)$  with compact support in $[0,T]\times (\bb T\backslash \{0\})$
converging in $L^2([0,T]\times \bb T)$ to $H$, as $n\to\infty$. Consider also a sequence $\{h_n\}_{n\in \bb N}$, of continuous functions $h_n:[0,T]\rightarrow{\bb R}$ and
 converging in  $L^2([0,T])$ to $H(\cdot, 0)$, as $n\to\infty$. For each $n\in\bb N$, let
$$G_n(t,u):=H_n(t,u)+h_n(t)\textbf{1}_{(-\frac{1}{n},\,\frac{1}{n})}(u)\,.$$
Noticing that, for each $n\in\bb N$,
\begin{equation*}
 \Vert G_n-H\Vert_{{\alpha}}^2= \Vert H_n-H\Vert^2+\frac{2}{\alpha\,n}\int_0^T(H(t,0)-h_n(t))^2\, dt
\end{equation*}
the proof ends.
\end{proof}

\begin{proposition}\label{lemader_W}
Let $\xi:[0,T]\times\bb T\to \bb R$ be such that
$$\sup_{H}\Big\{\<\!\< \partial_u H,\xi \>\!\>- \kappa \<\!\< H,H\>\!\>_{\alpha}\Big\}<\infty,$$
for some $\kappa>0$, where the supremum is taken over functions $H\in C^{\,0,1}([0,T]\times\bb T)$.
Then,
there exists a function  in $L^2_{W_\alpha}([0,T]\times \bb T)$, which we denote by $\partial_u\xi$, such that
\begin{equation}\label{der_W}
 \<\!\< \partial_u H,\xi \>\!\>=- \<\!\< H,\partial_u\xi\>\!\>_{\alpha}
=-\<\!\< H,\partial_u\xi\>\!\>-\frac{1}{\alpha}\int_0^T H_t(0)\partial_u\xi_t(0)\,dt\,,
\end{equation}
for all $H\in C^{\,0,1}([0,T]\times\bb T)$.
\end{proposition}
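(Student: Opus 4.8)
The plan is to recognize the left-hand inner product as a linear functional on $C^{\,0,1}([0,T]\times\bb T)$ and to show that it is bounded in the $\|\cdot\|_\alpha$ norm, so that the Riesz Representation Theorem in $L^2_{W_\alpha}([0,T]\times\bb T)$ furnishes the representing function $\partial_u\xi$. First I would fix notation by calling the finite supremum in the hypothesis $M$, and consider the linear map $L(H):=\<\!\<\partial_u H,\xi\>\!\>$ defined for $H\in C^{\,0,1}([0,T]\times\bb T)$; linearity is immediate, since both $\partial_u$ and the inner product are linear.

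The key step is to convert the quadratic bound supplied by the hypothesis into a genuine operator bound. Since the supremum hypothesis holds for every test function, I would apply it to $yH$ for an arbitrary scalar $y\in\bb R$, obtaining
\begin{equation*}
y\,\<\!\<\partial_u H,\xi\>\!\>-\kappa\, y^2\,\|H\|_\alpha^2\,\leq\, M\,.
\end{equation*}
Viewing the left-hand side as a quadratic in $y$ whose maximum over $y$ equals $\<\!\<\partial_u H,\xi\>\!\>^2/(4\kappa\|H\|_\alpha^2)$, I conclude
\begin{equation*}
\big|\<\!\<\partial_u H,\xi\>\!\>\big|\,\leq\,2\sqrt{\kappa M}\,\|H\|_\alpha\,,
\end{equation*}
so that $L$ is a bounded linear functional with respect to the norm of $L^2_{W_\alpha}([0,T]\times\bb T)$.

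Next I would invoke the density of $C^{\,0,1}([0,T]\times\bb T)$ in $L^2_{W_\alpha}([0,T]\times\bb T)$ established in the preceding proposition, which allows $L$ to be extended uniquely to a bounded linear functional on all of $L^2_{W_\alpha}([0,T]\times\bb T)$. The Riesz Representation Theorem then produces a function $\psi\in L^2_{W_\alpha}([0,T]\times\bb T)$ such that $L(H)=\<\!\<H,\psi\>\!\>_\alpha$ for every $H$; setting $\partial_u\xi:=-\psi$ yields the first claimed equality $\<\!\<\partial_u H,\xi\>\!\>=-\<\!\<H,\partial_u\xi\>\!\>_\alpha$. Finally, the second equality is purely a matter of unfolding the definition of the inner product $\<\!\<\cdot,\cdot\>\!\>_\alpha$ against the measure $W_\alpha(du)=du+\tfrac{1}{\alpha}\delta_0(du)$, which splits $\<\!\<H,\partial_u\xi\>\!\>_\alpha$ into $\<\!\<H,\partial_u\xi\>\!\>+\tfrac{1}{\alpha}\int_0^T H_t(0)\partial_u\xi_t(0)\,dt$.

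There is no deep obstacle in this argument; it is a textbook application of Riesz representation once boundedness is secured. The one step requiring slight care is the passage from the quadratic supremum bound to the operator bound on $L$ via the scaling $H\mapsto yH$ and optimization in $y$, together with keeping track of the sign so that the representing function is correctly identified with $-\partial_u\xi$ rather than $+\partial_u\xi$.
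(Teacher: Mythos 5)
Your proof is correct and follows essentially the same route as the paper, whose own proof is a one-line pointer to the argument of Proposition \ref{propositL2}: the scaling $H\mapsto yH$ with optimization over $y$ to convert the quadratic supremum bound into boundedness of the linear functional, density of $C^{\,0,1}([0,T]\times\bb T)$ in $L^2_{W_\alpha}([0,T]\times\bb T)$, and the Riesz Representation Theorem in that Hilbert space. Your write-up merely makes explicit the details the paper leaves implicit, including the sign convention for $\partial_u\xi$ and the splitting of $\<\!\< H,\partial_u\xi\>\!\>_{\alpha}$ according to $W_\alpha(du)=du+\frac{1}{\alpha}\delta_0(du)$.
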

\begin{proof}
Following the same arguments as in the proof of Proposition \ref{propositL2}, this is a consequence of Riesz Representation Theorem.
\end{proof}
\begin{remark}\label{remark1}
 The function $\partial_u\xi$ above
  is indeed the weak derivative of the function $\xi$ in the usual sense. To see this, notice that, for $H\in C_c$,
\begin{equation*}
  \<\!\< \partial_u H,\xi \>\!\>=- \<\!\< H,\partial_u\xi\>\!\>_{\alpha}=-\<\!\< H,\partial_u\xi\>\!\>\,.
\end{equation*}
\end{remark}
\begin{proposition}\label{lema3Wa}
Let $\xi:[0,T]\times\bb T\to \bb R$ be such that
$$\sup_{H}\,\Big\{\<\!\< \partial_u H,\xi \>\!\>- \kappa \<\!\< H,H\>\!\>_{\alpha}\,\Big\}<\infty\,,$$
for some $\kappa>0$, where the supremum is taken over functions $H\in C^{\,0,1}([0,T]\times\bb T)$.  Then,
\begin{equation}\label{energy exp}
\begin{split}
 \sup_{H}\,\Big\{ \<\!\< \partial_u H,\xi \>\!\>- \kappa \<\!\< H,H\>\!\>_{\alpha}\Big\}\,&=
\,\pfrac{1}{4\kappa}\int_0^T\Vert\partial_u\xi_t\Vert_{{\alpha}}^2\,dt\\
&=
\,\pfrac{1}{4\kappa}\int_0^T\Big(\Vert\partial_u\xi_t\Vert^2+\frac{1}{\alpha}(\partial_u\xi_t(0))^2\Big)\,dt\,,
\end{split}
\end{equation}
where the supremum is taken over functions $H\in C^{\,0,1}([0,T]\times\bb T)$.
\end{proposition}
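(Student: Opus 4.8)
The plan is to turn the variational problem into a completing-the-square computation in the Hilbert space $L^2_{W_\alpha}([0,T]\times\bb T)$, the only genuine subtlety being that the optimal $H$ need not be a test function. First I would invoke Proposition \ref{lemader_W}: the standing hypothesis is precisely its assumption, so there is a function $\partial_u\xi\in L^2_{W_\alpha}([0,T]\times\bb T)$ satisfying $\<\!\<\partial_u H,\xi\>\!\>=-\<\!\<H,\partial_u\xi\>\!\>_\alpha$ for every $H\in C^{\,0,1}([0,T]\times\bb T)$. Substituting this identity rewrites the functional purely in terms of the $W_\alpha$-inner product, namely $\<\!\<\partial_u H,\xi\>\!\>-\kappa\<\!\<H,H\>\!\>_\alpha=-\<\!\<H,\partial_u\xi\>\!\>_\alpha-\kappa\|H\|_\alpha^2$.

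Writing $g:=\partial_u\xi$, completing the square in $L^2_{W_\alpha}$ gives, for every $H$,
\[
-\<\!\<H,g\>\!\>_\alpha-\kappa\|H\|_\alpha^2=\frac{1}{4\kappa}\|g\|_\alpha^2-\kappa\Big\|H+\frac{1}{2\kappa}g\Big\|_\alpha^2.
\]
Hence over the whole space $L^2_{W_\alpha}$ this functional is bounded above by $\frac{1}{4\kappa}\|g\|_\alpha^2$, with equality exactly at $H=-\frac{1}{2\kappa}g$. This already delivers the upper bound $\sup_H\{\cdots\}\le\frac{1}{4\kappa}\|g\|_\alpha^2$ when the supremum is restricted to $C^{\,0,1}([0,T]\times\bb T)$.

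For the matching lower bound I would argue by density, which is the one delicate point, since the maximizer $-g/(2\kappa)$ need not lie in $C^{\,0,1}([0,T]\times\bb T)$. By the density of $C^{\,0,1}([0,T]\times\bb T)$ in $L^2_{W_\alpha}([0,T]\times\bb T)$ proved above, I can choose $H_n\in C^{\,0,1}$ with $H_n\to-\frac{1}{2\kappa}g$ in $\|\cdot\|_\alpha$. The map $H\mapsto-\<\!\<H,g\>\!\>_\alpha-\kappa\|H\|_\alpha^2$ is continuous in the $\|\cdot\|_\alpha$-norm (Cauchy--Schwarz controls the linear term, continuity of the norm the quadratic one), so evaluating along $H_n$ and passing to the limit produces admissible values converging to $\frac{1}{4\kappa}\|g\|_\alpha^2$; therefore the supremum over $C^{\,0,1}$ equals $\frac{1}{4\kappa}\|g\|_\alpha^2$. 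Finally, unfolding the $W_\alpha$-norm according to \eqref{W} yields $\|g\|_\alpha^2=\<\!\<\partial_u\xi,\partial_u\xi\>\!\>_\alpha=\int_0^T\|\partial_u\xi_t\|_\alpha^2\,dt=\int_0^T\big(\|\partial_u\xi_t\|^2+\frac{1}{\alpha}(\partial_u\xi_t(0))^2\big)\,dt$, which is the second expression in \eqref{energy exp}. The main obstacle is thus not the algebra but the density argument legitimizing the passage from $L^2_{W_\alpha}$ back to the smaller class $C^{\,0,1}$.
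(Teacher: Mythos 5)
Your proposal is correct and follows essentially the same route as the paper: both first invoke Proposition \ref{lemader_W} to rewrite the functional as $-\<\!\<H,\partial_u\xi\>\!\>_{\alpha}-\kappa\<\!\<H,H\>\!\>_{\alpha}$, and your completing-the-square is algebraically identical to the paper's Young's inequality with the choice $r=2\kappa$. For the lower bound the paper likewise approximates, in $L^2_{W_\alpha}([0,T]\times\bb T)$, the explicit maximizer $-\tfrac{1}{2\kappa}\partial_u\xi$ (there written as $r\,\partial_u\xi$ with $r=-\tfrac{1}{2\kappa}$) by functions $H^n\in C^{\,0,1}([0,T]\times\bb T)$ via the density proposition of the Appendix, exactly as you do.
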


\begin{proof}
By Proposition \ref{lemader_W}, for all $H\in C^{\,0,1}([0,T]\times\bb T)$,
\begin{equation}\label{eq40}
 \<\!\< \partial_u H,\xi \>\!\>- \kappa \<\!\< H,H\>\!\>_{\alpha}\,=\,-\<\!\<  H,\partial_u\xi \>\!\>_{\alpha}- \kappa \<\!\< H,H\>\!\>_{\alpha}\,.
\end{equation}
By Young's Inequality, for all $r>0$,
\begin{equation*}
 \vert\<\!\<  H,\partial_u\xi \>\!\>_{\alpha}\vert\;\leq\; \frac{r}{2}\<\!\<  H,H \>\!\>_{\alpha}
+\frac{1}{2r}\<\!\<  \partial_u\xi,\partial_u\xi \>\!\>_{\alpha}\,.
\end{equation*}
Choosing $r=2\kappa$, together with \eqref{eq40} we get to
 \begin{equation*}
\sup_{H}\,\Big\{ \<\!\< \partial_u H,\xi \>\!\>- \kappa \<\!\< H,H\>\!\>_{\alpha}\Big\}
\leq\pfrac{1}{4\kappa}\int_0^T\Vert\partial_u\xi_t\Vert_{{\alpha}}^2\,dt\,.
 \end{equation*}
For the reversed inequality, let
$\{H^n\}_{n\in\bb N}\subset C^{\,0,1}([0,T]\times\bb T)$
 converging to $r\,\partial_u\xi$
in $L^2_{W_\alpha}([0,T]\times \bb T)$, as $n\to \infty$. The constant $r\in \bb R$ will be chosen ahead. Thus,
 \begin{equation*}
\begin{split}
\sup_{H}\;\Big\{\<\!\<  \partial_u H,\xi \>\!\>- \kappa \<\!\< H,H\>\!\>_{\alpha}\Big\}=&
\sup_{H}\;\Big\{-\<\!\<  H,\partial_u\xi \>\!\>_{\alpha}- \kappa \<\!\< H,H\>\!\>_{\alpha}\Big\}\\
\geq&
\lim_{n\to\infty} -\<\!\< H^n,\partial_u\xi \>\!\>_{\alpha}- \kappa \<\!\<H^n,H^n\>\!\>_{\alpha}\\
=&(-r-\kappa r^2)\int_0^T\Vert\partial_u\xi_t\Vert_{{\alpha}}^2\,dt\,.
\end{split}
 \end{equation*}
Taking $r=-\frac{1}{2\kappa}$ the proof ends.

\end{proof}

\begin{corollary}\label{lema3}
Let $\xi\in L^2(0,T;\mc H^1)$ be such that
 $$\sup_{H}\Big\{\<\!\< \partial_u H,\xi \>\!\>- \kappa \<\!\< H,H\>\!\>\Big\}<\infty\,,$$
 for some $\kappa>0$. Then,
\begin{equation*}%\label{energy exp}
\sup_{H}\,\Big\{ \<\!\< \partial_u H,\xi \>\!\>- \kappa \<\!\< H,H\>\!\>\Big\}\,=
\,\pfrac{1}{4\kappa}\int_0^T\Vert\partial_u\xi_t\Vert^2\,dt\,,
\end{equation*}
where  the function $\partial_u\xi_t$ coincides, Lebesgue almost surely, with the  function $\partial_u\xi_t$ of Proposition \ref{lema3Wa}.
Above, the supremums are taken over $H\in C_c$, see Definition \ref{def9}.
\end{corollary}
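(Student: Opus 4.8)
The plan is to follow, almost verbatim, the proof of Proposition \ref{lema3Wa}, the only differences being that the weighted inner product $\<\!\<\cdot,\cdot\>\!\>_\alpha$ is replaced by the usual $\<\!\<\cdot,\cdot\>\!\>$ and that the supremum now ranges over $H\in C_c$. The crucial simplification comes from the fact that every $H\in C_c$ vanishes in a neighborhood of the boundary points $0$ and $1$: on the one hand this forces $\<\!\<H,H\>\!\>_\alpha=\<\!\<H,H\>\!\>$, and on the other hand an integration by parts in the space variable produces no boundary contribution, so that
\begin{equation*}
\<\!\<\partial_u H,\xi\>\!\>\,=\,-\<\!\<H,\partial_u\xi\>\!\>\,,
\end{equation*}
where $\partial_u\xi$ is the weak derivative in the usual sense, which exists and lies in $L^2([0,T]\times[0,1])$ simply because $\xi\in L^2(0,T;\mc H^1)$. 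By Remark \ref{remark1}, this $\partial_u\xi$ coincides Lebesgue almost surely with the derivative furnished by Proposition \ref{lema3Wa}, which settles the identification claimed in the statement.

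For the upper bound I would rewrite the functional as $-\<\!\<H,\partial_u\xi\>\!\>-\kappa\<\!\<H,H\>\!\>$ and apply Young's inequality
\begin{equation*}
|\<\!\<H,\partial_u\xi\>\!\>|\;\leq\;\frac{r}{2}\<\!\<H,H\>\!\>+\frac{1}{2r}\<\!\<\partial_u\xi,\partial_u\xi\>\!\>
\end{equation*}
with the optimal choice $r=2\kappa$, which kills the $\<\!\<H,H\>\!\>$ term and yields
\begin{equation*}
\sup_{H\in C_c}\Big\{\<\!\<\partial_u H,\xi\>\!\>-\kappa\<\!\<H,H\>\!\>\Big\}\;\leq\;\frac{1}{4\kappa}\int_0^T\|\partial_u\xi_t\|^2\,dt\,.
\end{equation*}

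For the reversed inequality I would exhibit a nearly optimal test sequence. Since $C_c$ is dense in $L^2([0,T]\times[0,1])$ (as already used in the proof of Proposition \ref{propositL2}), I choose $\{H^n\}_{n\in\bb N}\subset C_c$ converging in $L^2([0,T]\times[0,1])$ to $r\,\partial_u\xi$ with $r=-\frac{1}{2\kappa}$. Evaluating the functional at $H^n$ and passing to the limit gives
\begin{equation*}
\sup_{H\in C_c}\Big\{\<\!\<\partial_u H,\xi\>\!\>-\kappa\<\!\<H,H\>\!\>\Big\}\;\geq\;(-r-\kappa r^2)\int_0^T\|\partial_u\xi_t\|^2\,dt\;=\;\frac{1}{4\kappa}\int_0^T\|\partial_u\xi_t\|^2\,dt\,,
\end{equation*}
which matches the upper bound and proves the claimed equality.

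The only point requiring genuine care, and hence the main obstacle, is the lower bound: the approximation of $r\,\partial_u\xi$ by functions in $C_c$ must take place in the topology under which the relevant inner products pass to the limit. This is harmless here precisely because on $C_c$ the bracket $\<\!\<\cdot,\cdot\>\!\>$ is exactly the $L^2([0,T]\times[0,1])$ inner product, so continuity of the inner product under $L^2$ convergence closes the argument; in contrast to Proposition \ref{lema3Wa}, no weighted norm and no boundary term need to be controlled, which is why the proof is genuinely simpler than its weighted counterpart.
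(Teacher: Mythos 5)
Your proof is correct and is essentially the paper's own argument made explicit: the paper proves the corollary by appealing to Proposition \ref{lema3Wa} together with Remark \ref{remark1} and the fact that $\xi\in L^2(0,T;\mc H^1)$, which amounts precisely to rerunning the Young's-inequality upper bound and the density-based lower bound in the unweighted setting, exactly as you do. Your observations that $\<\!\<H,H\>\!\>_\alpha=\<\!\<H,H\>\!\>$ on $C_c$ and that no boundary term survives the integration by parts are the same simplifications underlying the paper's one-line deduction.
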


\begin{proof}
 Since $\xi\in L^2(0,T;\mc H^1)$ and by Remark \ref{remark1}, the result is a consequence of  Proposition \ref{lema3Wa}.
\end{proof}

\begin{proposition}\label{fund}
Let $\xi: [0,T]\times [0,1]\to \bb R$ be such that $$\sup_{H}\<\!\< \partial_u H,\xi \>\!\>- \kappa \<\!\< H,H\>\!\>_{\alpha}<\infty,$$ for some $\kappa>0$,
where the supremum is taken over functions $H\in C^{\,0,1}([0,T]\times\bb T)$. Then, $t\in[0,T]$ almost surely,
\begin{equation*}%\label{eq_000}
\xi_t(v)-\xi_t(u)=\int_{[u,v)} \partial_u \xi_t(z)\,W_\alpha(dz),\quad \forall u,v\in \bb T,
\end{equation*}
where $\partial_u\xi$ satisfies \eqref{der_W}. In particular,  $t\in[0,T]$ almost surely
\begin{equation*}
\xi_t(0)-\xi_t(1)=\frac{1}{\alpha} \partial_u \xi_t(0)\,.
\end{equation*}

\end{proposition}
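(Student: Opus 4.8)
The plan is to realize $\xi_t$, for almost every $t$, as the indefinite integral of its weak derivative with respect to the measure $W_\alpha$, exactly as in the classical Fundamental Theorem of Calculus but carried out on the torus. First I would invoke Proposition \ref{lemader_W} to produce the weak derivative $g:=\partial_u\xi\in L^2_{W_\alpha}([0,T]\times\bb T)$ satisfying the integration by parts identity \eqref{der_W}. The crucial preliminary observation is a zero mean (compatibility) condition: testing \eqref{der_W} against functions of the form $H_t(u)=\phi(t)$, which are constant in space and hence have $\partial_u H\equiv 0$, forces
\begin{equation*}
\int_0^1 g_t(z)\,dz+\frac{1}{\alpha}\,g_t(0)=\int_{\bb T}g_t(z)\,W_\alpha(dz)=0\qquad\textrm{for a.e. }t\in[0,T].
\end{equation*}
This is precisely the condition that renders the indefinite $W_\alpha$-integral single valued on the torus.

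Next I would define, for each $t$, the candidate primitive
\begin{equation*}
R_t(v):=\int_{[0,v)}g_t(z)\,W_\alpha(dz),\qquad v\in\bb T,
\end{equation*}
so that $R_t(v)=\int_0^v g_t(z)\,dz+\tfrac1\alpha g_t(0)$ for $v\in(0,1)$, a function absolutely continuous on $(0,1)$ and carrying a jump of size $\tfrac1\alpha g_t(0)$ at the atom $0$. The main computation is to show that $R$ has the same weak derivative as $\xi$. Fixing $t$ and integrating by parts on $(0,1)$ against a test function $H$, the Lebesgue part produces a boundary term $H_t(0)\int_0^1 g_t$ (using periodicity $H_t(0)=H_t(1)$) together with $-\int_0^1 H_t g_t$, while the constant contribution coming from the atom is $\tfrac1\alpha g_t(0)(H_t(1)-H_t(0))=0$; invoking the zero mean condition to rewrite $H_t(0)\int_0^1 g_t=-\tfrac1\alpha H_t(0)g_t(0)$ yields exactly
\begin{equation*}
\int_{\bb T}\partial_u H_t(v)\,R_t(v)\,dv=-\int_{\bb T}H_t(v)\,g_t(v)\,W_\alpha(dv),
\end{equation*}
and hence $\<\!\<\partial_u H,R\>\!\>=-\<\!\<H,g\>\!\>_{\alpha}=\<\!\<\partial_u H,\xi\>\!\>$ for every $H\in C^{\,0,1}([0,T]\times\bb T)$.

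From here I would conclude. Since $\<\!\<\partial_u H,R-\xi\>\!\>=0$ for all such $H$, testing with separable functions $H_t(v)=\phi(t)\psi(v)$ and letting $\psi$ range over a countable dense family of $C^1(\bb T)$ shows that, for a.e. $t$, the function $R_t-\xi_t$ has vanishing weak derivative on $\bb T$, hence equals a constant $c_t$ almost everywhere in $v$. Choosing the good representative $\xi_t:=R_t-c_t$ (the one with well defined lateral limits) gives, by additivity of the integral, for all $u,v\in\bb T$,
\begin{equation*}
\xi_t(v)-\xi_t(u)=R_t(v)-R_t(u)=\int_{[u,v)}g_t(z)\,W_\alpha(dz),
\end{equation*}
which is the asserted identity. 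The particular case follows by reading off the lateral limits at the atom: $R_t(0^+)=\tfrac1\alpha g_t(0)$ while $R_t(1^-)=\int_0^1 g_t+\tfrac1\alpha g_t(0)=0$ by the zero mean condition, so $\xi_t(0)-\xi_t(1)=\tfrac1\alpha\partial_u\xi_t(0)$.

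I expect the main obstacle to be the bookkeeping in the integration by parts step: tracking simultaneously the periodic boundary contributions and the Dirac mass at $0$, and recognizing that the zero mean condition is exactly what forces the boundary terms to reassemble into the $W_\alpha$-pairing. A secondary point is the passage from the almost everywhere identity to the pointwise statement valid for all $u,v$, which is handled by fixing the good representative of $\xi_t$.
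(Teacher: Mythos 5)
Your proof is correct, and it takes a genuinely different route from the paper's. The paper works backwards from the increments: it applies the identity of Proposition \ref{lemader_W} (for a.e.\ $t$, against all $H\in C^1(\bb T)$) to explicit test functions $H_n(z)=\int_{(0,z]}f_n\,dW_\alpha$, where $f_n$ is the difference of $W_\alpha$-normalized indicators of the shrinking intervals $[v,v+\tfrac1n]$ and $[u,u+\tfrac1n]$ (suitably smoothed, since $f_n$ is not continuous); then $H_n\to -{\bf 1}_{[u,v)}$ pointwise $W_\alpha$-a.s., one side of the pairing converges by Cauchy--Schwarz to $\<\partial_u\xi_t,{\bf 1}_{[u,v)}\>_\alpha$, and the other side, being a difference of averages of $\xi_t$, converges by the Lebesgue--Besicovitch differentiation theorem, giving the identity for $W_\alpha$-a.e.\ $u,v$; the atom relation is then recovered by the lateral limits $u\to 1^-$, $v\to 0^+$. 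You instead go forward: you isolate the compatibility condition $\int_{\bb T}g_t\,dW_\alpha=0$ (which the paper never states explicitly but uses tacitly, e.g.\ in its final limit), build the single-valued primitive $R_t$, check by a direct Fubini computation that $R$ and $\xi$ have the same $W_\alpha$-weak derivative, and conclude by a du Bois-Reymond argument (testing against $\psi'$ with $\psi$ in a countable dense subset of $C^1(\bb T)$, noting these $\psi'$ exhaust the continuous zero-mean functions) that $R_t-\xi_t$ is a.e.\ constant for a.e.\ $t$. Your route avoids differentiation theory and the smoothing of $f_n$, and it handles the quantifier ``for all $u,v$'' more cleanly: fixing the representative $\xi_t=R_t-c_t$ yields the increment identity exactly, whereas the paper obtains it only $W_\alpha$-a.e.\ in $u,v$ and implicitly passes to the representative with well-defined lateral limits. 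What the paper's approach buys is directness -- no constancy lemma and no quantifier-swapping density step. Both arguments rest on the same Lemma \ref{lemader_W} and on the same implicit square-integrability of $\xi$, needed in either case for the pairings (and, in your case, for the du Bois-Reymond step) to make sense.
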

\begin{proof} A function defined on the interval $[0,1]$ can be identified, Lebesgue almost surely, with a function defined in the continuous torus $\bb T$.
It is in this sense that the function $\xi_t$ is understood here. Besides that, it is fixed the following orientation on $\bb T$. If $u, v\in (0,1]$ and $u<v$, then the integral over $(v,u]$ corresponds to a integral over $(v,1]\cup(0,u]$.

From Proposition \ref{lemader_W}, for $t\in{[0,T]}$ almost surely,
\begin{equation}\label{sob}
\<\!\< \partial_uH,\,\xi_t\>\!\>=
 -\<\!\<H,\,\partial_u\xi_t\>\!\>_{\alpha }\,,
\end{equation}
for all $H\in C^{1}(\bb T)$.
% Fixed a time $t\in[0,T]$ for which is valid the equality above.
% Denote by $\<\cdot,\cdot\>_W$ the intern product in $L^2_W (\bb T)$.
For $u,v\in{\bb T}$ and $n\geq{1}$ define $f_n:\bb T\to \bb R$ by
\begin{equation*}
f_n(w)\;=\;\left\{\begin{array}{cc}
-W_\alpha([u,u+\frac{1}{n}])^{-1}, &  \mbox{if}\,\,\,\,w\in[u,u+\frac{1}{n}]\\
W_\alpha([v,v+\frac{1}{n}])^{-1}, &  \mbox{if}\,\,\,\,w\in[v,v+\frac{1}{n}]\\
0, &\mbox{otherwise,}
\end{array}
\right.
\end{equation*}
\noindent and $H_n:\bb T\to\bb R$ by
\begin{equation*}
H_n(z)=\int_{(0,z]} f_n(w)\, W_\alpha(dw)\,.
\end{equation*}
Notice that since  $f_n$ is not a continuous function, then $H_n\notin C^{1}(\bb T)$. However, approximating $f_n$ and $H_n$ by  continuous functions
$f_n^\eps: \bb T\to \bb R$ and $H_n^\eps(r)=\int_{(0,r]} f_n^\eps(z)\, W_\alpha(dz)$,
respectively, equality $\eqref{sob}$
is still valid for $f_n$ and $H_n$.

We claim that $H_n(r)$ converges to $-\textbf 1_{[u,v)}(r)$,
$W_\alpha$-almost surely, as $n\to\infty$.

Indeed, if $u,v\neq 0$,  $H_n(r)$ converges pointwisely, as $n\to\infty$, to $-\textbf 1_{(u,v]}(r)$,
which is equal to $-\textbf 1_{[u,v)}(r)$, $W_\alpha$-almost surely.

If $u=0$, then $H_n(r)$ converges
pointwisely, as $n\to\infty$, to $-\textbf 1_{[u,v]}(r)$, which is equal to
 $-\textbf 1_{[u,v)}(r)$, $W_\alpha$-almost surely.

 If $v=0$, then $H_n(r)$ converges
pointwisely, as $n\to\infty$, to $-\textbf 1_{(u,v)}(r)$, which is equal to $-\textbf 1_{[u,v)}(r)$, $W_\alpha$-almost surely.
By the Cauchy-Schwarz inequality,
$$\lim_{n\to\infty}\<-\partial_u\xi_t , H_n\>_{\alpha}=
\<\partial_u\xi_t , \textbf 1_{[u,v)}\>_{\alpha}\,.$$
By the definition of $f_n$, we have that for each $n\in\bb N$,
\begin{equation*}
 \begin{split}
\<\xi_t,f_n\>_{\alpha}=&\;\;\frac{1}{W_\alpha([v,v+\frac{1}{n}])}\int_{[v,v+1/n]}\xi_t(w)\,W_\alpha(dw)\\
&-\frac{1}{W_\alpha([u,u+\frac{1}{n}])}\int_{[u,u+1/n]}\xi_t(w)\,W_\alpha(dw)\,.
 \end{split}
\end{equation*}
Sending $n\to\infty$ in the previous equality, by Lebesgue-Besicovitch Differentiation Theorem
(see \cite{eg}) we obtain that $\<\xi_t , f_n\>_{\alpha}$ converges $W_\alpha$ -
 almost surely in $u,v\in\bb T$, to $\xi_t(v)-\xi_t(u)$, which finishes the proof of the first claim. Finally,
 \begin{equation*}
\xi_t(0)-\xi_t(1)=\lim_{\at{u\to 1^-}{v\to 0^+}} \int_{[u,v)} \partial_u \xi_t(w)\,W_\alpha(dw)=\frac{1}{\alpha} \partial_u \xi_t(0)\,.
\end{equation*}
\end{proof}

\section{Discussion on the heat equation with Robin's boundary conditions}

In this section we make some connections of the results we obtained here with respect to a particular case considered in \cite{fl}. From that paper, it is known that
the hydrodynamic equation  for the slowed symmetric exclusion presented in the case $\beta=1$, is given by
\begin{equation}\label{he2}
\begin{cases}
 \partial_t \rho(t,u) \; =\; \frac{d}{du}\frac{d}{dW} \rho(t,u)\,,\\
\rho(0,u) \;=\; \rho_0(u)\,,
\end{cases}
\end{equation}
where $\frac{d}{du}\frac{d}{dW}$ is a generalized derivative, being $W$ a measure given by the sum of the Lebesgue measure and a delta of Dirac. For the definition of the operator
 $\pfrac{d}{du}\pfrac{d}{dW}$, we refer to \cite{fl} and references therein. In this paper we found a classical description of last equation, namely the heat equation with Robin's boundary conditions as given in  \eqref{her}. Below we make some connections relating the solutions of these equations.

Firstly, we describe how to get the weak solution of \eqref{he2} from  the weak solution of equation \eqref{her}. Adapting from  \cite{fl} the definition of the set of test functions
for \eqref{he2}, we have the following definition:
\begin{definition}
Let $\mc H^1_{W_\alpha}$ be the set of functions $H$ in $L^2(\bb T)$ such that for $u\in{\bb T}$
\begin{equation}\label{H}
H(u) \;=\; \tilde a \;+\; \int_{(0,u]}\Big(\tilde b+\int_{0}^v h(w) \, dw\Big) W_\alpha(dv)\, ,
\end{equation}
for some function $ h$ in $L^2(\bb T)$ and $\tilde a,\tilde b\in{\mathbb{R}}$ such that
\begin{equation}\label{domain}
\int_{0}^{1}  h(u) \, du \;=\; 0\;, \quad
\int_{(0,1]}  \Big( \tilde b + \int_{0}^v h(w) \, dw \Big) W_\alpha(dv)\;=\;0\;,
\end{equation}
where  $W_\alpha$ was given in \eqref{W}.
\end{definition}
{
\begin{definition}
Let $\mc{C}_{W_{\alpha}}$ be the set of functions $H\in{\mc H^1_{W_\alpha}}$ such that $h\in{C(\bb T)}$.
\end{definition}}

We have the following property about the elements of the space $\mc{C}_{W_{\alpha}}$.

\begin{lemma}\label{lemma imp}
$\mc{C}_{W_{\alpha}}\subseteq{\A}$.
\end{lemma}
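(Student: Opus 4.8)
The plan is to take an arbitrary $H\in\mc{C}_{W_\alpha}$ and to check, by unwinding its defining representation \eqref{H}, that its restriction to the interval $[0,1]$ is of class $C^2$ in the space variable; since such an $H$ does not depend on time, it is then automatically an element of $\A=C^{1,2}([0,T]\times[0,1])$. So I would fix $H\in\mc{C}_{W_\alpha}$, with associated constants $\tilde a,\tilde b\in\bb R$ and function $h\in C(\bb T)$ as in \eqref{H}, and abbreviate $\phi(v):=\tilde b+\int_0^v h(w)\,dw$, so that
\begin{equation*}
H(u)\,=\,\tilde a+\int_{(0,u]}\phi(v)\,W_\alpha(dv)\,.
\end{equation*}

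The key observation is that the Dirac part of $W_\alpha$, recall \eqref{W}, is concentrated at $0\in\bb T$, whereas for $u\in(0,1)$ the arc $(0,u]$ does not contain the point $0\equiv1$. Hence along $(0,u]$ the measure $W_\alpha$ coincides with Lebesgue measure, and
\begin{equation*}
H(u)\,=\,\tilde a+\int_0^u\phi(v)\,dv\,,\qquad u\in(0,1)\,.
\end{equation*}
Because $h$ is continuous on $[0,1]$, the primitive $\phi$ is $C^1$ on $[0,1]$ with $\phi'=h$, and therefore $H$ is $C^2$ on $[0,1]$, with $\partial_u H=\phi$ and $\Delta H=\partial_u^2 H=h$; all of these extend continuously to the closed interval. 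Viewing $H$ as a function of $(t,u)$ that is constant in $t$ would then place it in $\A$, which is the assertion.

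The step requiring genuine care, and the one I expect to be the main obstacle, is the identification of a function on $\bb T$ with a function on the interval $[0,1]$. On the torus the gluing point $0\equiv1$ carries the Dirac mass of $W_\alpha$, and $H$ will in general be discontinuous there; the two constraints in \eqref{domain} are precisely what guarantee that $H$ is a well-defined torus function, namely $H(0)=H(1)=\tilde a$, and they encode the boundary behaviour. When restricting to $[0,1]$ the value at the single gluing point is immaterial, and one selects the $C^2$ representative by taking the left limit at $u=1$, which is the function displayed above. As a consistency check one may invoke the first identity in \eqref{domain}, $\int_0^1 h=0$, to get $\partial_u H(0)=\partial_u H(1)=\tilde b$, and the second identity, $\int_{(0,1]}\phi\,W_\alpha(dv)=\int_0^1\phi\,dv+\tfrac1\alpha\tilde b=0$, to obtain $H(0)-H(1)=-\int_0^1\phi\,dv=\tilde b/\alpha$, so that the Robin conditions \eqref{boundary conditions in Hbc} hold; this is what makes these functions admissible test functions. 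For the stated inclusion $\mc{C}_{W_\alpha}\subseteq\A$, however, the continuity of $h$ yielding $C^2$ regularity up to the boundary is all that is strictly needed.
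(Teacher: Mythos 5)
Your proof is correct and takes essentially the same route as the paper's: both restrict to $(0,1)$, note that the Dirac mass of $W_\alpha$ at $0\equiv 1$ does not charge $(0,u]$ for $u\in(0,1)$ so that $H(u)=\tilde a+\tilde b u+\int_0^u\int_0^v h(w)\,dw\,dv$ there, and then extend this $C^2$, time-independent formula to the closed interval $[0,1]$ to land in $\A$. Your closing verification of the Robin conditions \eqref{boundary conditions in Hbc} is a correct consistency check but is not needed for the inclusion; in the paper it is the content of the subsequent lemma.
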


\begin{proof} From now on, we identify the torus $\bb T$ with $(0,1]$. Let $H\in\mc{C}_{W_{\alpha}}$.
In order to  prove that $H\in\A$, we restricted the domain $\bb T$  of $H$ to  the open interval $(0,1)$, according to the definition of the space $\A$.
By definition of the set $\mc{C}_{W_{\alpha}}$, for all $u\in(0,1)$, $H(u)$ can be written as
\begin{equation*}
\tilde a +\tilde b u+\int_{0}^u\int_{0}^v h(w)\,dw\,dv \,,
\end{equation*}
for some  function $ h\in C(\bb T)$ and some constants $\tilde a$ and $\tilde b$ in ${\mathbb{R}}$ satisfying  conditions \eqref{domain}.
Then, one can see that the restriction of $H$ to $(0,1)$ belongs to $C^{1,2}([0,T]\times (0,1))$. After, we need to construct an extension $\tilde{H}:[0,T]\times [0,1]\to \bb R$ such that:
\begin{itemize}
 \item $\tilde{H}\in C^{1,2}([0,T]\times[0,1])$;
 \item $\tilde H$ restricted to $[0,T]\times(0,1)$ coincides with $H$.
\end{itemize}
But, it is not hard to see that it is enough to consider the function
\begin{equation*}
\tilde{H} (u)\;=\; \tilde a +\tilde b u+\int_{0}^u\int_{0}^v h(w)\,dw\,dv
 \,,
\end{equation*}
defined for all $u\in[0,1]$.
\end{proof}

Moreover, all elements in $\mc{C}_{W_{\alpha}}$ satisfy the Robin's boundary conditions:

 \begin{lemma}
If $H\in \mc{C}_{W_{\alpha}}$, then $\partial_uH(0)=\partial_uH(1)=\alpha (H(0)-H(1))$.
 \end{lemma}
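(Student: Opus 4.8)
The plan is to use the explicit primitive representation of $H$ established in the proof of Lemma \ref{lemma imp}, and then read off the two boundary identities by differentiation and by invoking the two constraints in \eqref{domain}. Identifying $\bb T$ with $(0,1]$ as before, set $g(v) := \tilde b + \int_0^v h(w)\, dw$, so that the extension exhibited in Lemma \ref{lemma imp} is $H(u) = \tilde a + \int_0^u g(v)\, dv$ for every $u \in [0,1]$. Differentiating this absolutely continuous primitive gives $\partial_u H(u) = g(u) = \tilde b + \int_0^u h(w)\, dw$.

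First I would prove the equality of the two endpoint derivatives. Evaluating directly, $\partial_u H(0) = g(0) = \tilde b$, while $\partial_u H(1) = g(1) = \tilde b + \int_0^1 h(w)\, dw$. By the first condition in \eqref{domain}, $\int_0^1 h(w)\, dw = 0$, hence $\partial_u H(1) = \tilde b = \partial_u H(0)$. This settles $\partial_u H(0) = \partial_u H(1)$.

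It then remains to identify this common value with $\alpha(H(0)-H(1))$. From the primitive representation, $H(0) = \tilde a$ and $H(1) = \tilde a + \int_0^1 g(v)\, dv$, so $H(0) - H(1) = -\int_0^1 g(v)\, dv$. The key ingredient is the second constraint in \eqref{domain}, namely $\int_{(0,1]} g(v)\, W_\alpha(dv) = 0$. Here the atom of $W_\alpha$ sits at $0 \in \bb T$, which under the identification of $\bb T$ with $(0,1]$ is represented by the endpoint $1$; thus, splitting $W_\alpha = du + \frac{1}{\alpha}\delta_0$, one gets $\int_{(0,1]} g\, dW_\alpha = \int_0^1 g(v)\, dv + \frac{1}{\alpha} g(1)$. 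Since $g(1) = \tilde b$ (again by the zero-mean condition on $h$), the constraint becomes $\int_0^1 g(v)\, dv = -\tilde b/\alpha$. Combining, $\alpha(H(0) - H(1)) = -\alpha \int_0^1 g(v)\, dv = \tilde b = \partial_u H(0) = \partial_u H(1)$, which is the desired conclusion.

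The only delicate point is the bookkeeping of where the Dirac mass of $W_\alpha$ contributes in the constraint $\int_{(0,1]} g\, dW_\alpha = 0$: one must evaluate the atomic term at the torus point $0$, represented by $1$ in $(0,1]$. This is harmless precisely because the zero-mean condition forces $g(0) = g(1) = \tilde b$, so the value of the atomic contribution is unambiguous and $\partial_u H$ glues continuously across the identification. Everything else is a routine application of the Fundamental Theorem of Calculus to the primitives defining $H$.
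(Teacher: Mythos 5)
Your proof is correct and takes essentially the same route as the paper: represent $H$ through its smooth primitive $\tilde a+\tilde b u+\int_0^u\int_0^v h(w)\,dw\,dv$, obtain $\partial_u H(0)=\partial_u H(1)=\tilde b$ from the zero-mean condition on $h$, and extract $\alpha(H(0)-H(1))=\tilde b$ from the second constraint in \eqref{domain}, with the atom of $W_\alpha$ evaluated at the torus point $0$ represented by $1$. If anything, you are more explicit than the paper's own argument, which writes the decomposition $H=G+\frac{\tilde b}{\alpha}\mathbf{1}_{\{1\}}$ but leaves implicit both the invocation of the second constraint and the identification of the Robin boundary values with those of the smooth extension $G$.
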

\begin{proof}
 Since $H\in \mc{C}_{W_{\alpha}}$, a simple computation shows that for $u\in{\bb T}\equiv(0,1]$
\begin{equation*}
H(u) \;=\;G(u) +\frac{\tilde b}{\alpha}\textbf{1}_{\{1\}}(u)\,,
\end{equation*}
 where
 \begin{equation*}
 G(u)=\tilde a+ \tilde{b} u+\int_{0}^u\int_{0}^v h(w) \, dw \, dv.
 \end{equation*}

 Notice that $G(\cdot)$ is continuous and smooth. Then, $H(0)=G(0)$ and $H(1)=G(1)+\frac{\tilde b}{\alpha}$. On the other hand,
$\partial_uH(0)=\partial_uH(1)=G'(0).$
Since $G'(0)=\tilde b$, then $\partial_uH(0)=\partial_uH(1)=\alpha(H(0)-H(1))$, which finishes the proof.
 \end{proof}

In \cite{fgn}, we considered $\alpha=1$ and we
proved that $\rho_t(\cdot)$ is a weak solution of \eqref{he2}, which in particular means that
\begin{equation}\label{eq-old}
\< \rho_t, H\> \;-\; \< \rho_0 , H\>
 -\int_0^t \big\< \rho_s , \pfrac{d}{du}\pfrac{d}{dW}  H \big\>\, ds\;=\;0\;,
\end{equation}
for all $t\in [0,T]$ and all $H\in \mc H^1_{W_1}$. Now, we present a result that relates the integral equations \eqref{eq-old} and \eqref{eqint2}.
We notice that by Proposition 6.3 of \cite{fgn}, it is enough to verify equation \eqref{eq-old} for functions in $\mc{C}_{W_\alpha}$.

\begin{proposition}
For $H\in \mc{C}_{W_\alpha}$, the integral equation \eqref{eqint2} coincides with the integral equation \eqref{eq-old}.
\end{proposition}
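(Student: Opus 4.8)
The plan is to reduce both integral equations to a common form by exploiting the two structural lemmas just proved about $\mc{C}_{W_\alpha}$. The key preliminary observation is that a function $H\in\mc{C}_{W_\alpha}$ depends only on the space variable, so in \eqref{eqint2} the term $\partial_sH_s$ vanishes identically. Consequently, the only genuine differences between \eqref{eqint2} and \eqref{eq-old} can lie in the bulk term (the Laplacian versus the generalized derivative) and in the two boundary integrals, and my task reduces to matching the former and cancelling the latter.

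First I would identify the bulk terms. By Lemma \ref{lemma imp}, the restriction of $H$ to $(0,1)$ equals $\tilde a+\tilde b u+\int_0^u\int_0^v h(w)\,dw\,dv$, whence its classical second derivative is $\Delta H=h$ on the interior. On the other hand, reading off the defining formula \eqref{H}, the generalized derivative $\frac{dH}{dW}$ is the integrand $\tilde b+\int_0^v h(w)\,dw$, so that $\frac{d}{du}\frac{d}{dW}H=h$ as well. Therefore $\<\rho_s,\Delta H\>=\<\rho_s,h\>=\<\rho_s,\frac{d}{du}\frac{d}{dW}H\>$, and the bulk integrands of the two equations agree for $\rho_s\in L^2[0,1]$.

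It then remains to show that the two boundary integrals of \eqref{eqint2} cancel. For this I would invoke the lemma asserting that every $H\in\mc{C}_{W_\alpha}$ satisfies the Robin condition $\partial_uH(0)=\partial_uH(1)=\alpha(H(0)-H(1))$. Writing $c:=\partial_uH(0)=\partial_uH(1)$, the first boundary integral becomes $-\int_0^t(\rho_s(0)-\rho_s(1))c\,ds$, while the Robin integral equals $+\int_0^t\alpha(\rho_s(0)-\rho_s(1))(H(0)-H(1))\,ds=+\int_0^t(\rho_s(0)-\rho_s(1))c\,ds$, using $\alpha(H(0)-H(1))=c$. These two terms are exactly opposite and so cancel. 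Hence \eqref{eqint2} collapses to $\<\rho_t,H\>-\<\rho_0,H\>-\int_0^t\<\rho_s,\Delta H\>\,ds=0$, which by the bulk identification is precisely \eqref{eq-old}.

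The only step requiring care is the identification $\frac{d}{du}\frac{d}{dW}H=\Delta H=h$ on the interior; this is not a new computation but an unpacking of the definition of the operator $\frac{d}{du}\frac{d}{dW}$ borrowed from \cite{fl}, combined with the explicit representation \eqref{H}. Everything else is the algebraic cancellation of the boundary integrals, fully supported by the two preceding lemmas, so I do not anticipate any substantive obstacle.
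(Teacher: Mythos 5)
Your proof is correct and follows essentially the same route as the paper's: both use Lemma \ref{lemma imp} to place $\mc{C}_{W_\alpha}$ inside $\A$ so that \eqref{eqint2} applies, the preceding lemma on Robin's boundary conditions to cancel the two boundary integrals, the time-independence of $H$ to kill $\partial_s H_s$, and the identification $\Delta H = h = \frac{d}{du}\frac{d}{dW}H$ from the definition of the generalized operator to match the bulk terms. The only difference is cosmetic: you spell out the boundary cancellation algebra that the paper leaves implicit.
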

\begin{proof}
From Lemma \ref{lemma imp} we know that $\mc{C}_{W_{\alpha}}$
is a subset of $\A$, which is the space of test functions for  the integral equation \eqref{eqint2}.
 From the previous lemma,  for $H\in\mc{C}_{W_{\alpha}}$
the integral equation \eqref{eqint2} reads as
\begin{equation*}
\< \rho_t, H\> \;-\; \< \rho_0 , H\>
 -\int_0^t \big\< \rho_s , h \big\>\, ds\;=\;0\;,
\end{equation*}
where $h=\Delta H$.
Notice that a function in $\mc{C}_{W_{\alpha}}$ does not depend on time.
Now, it is enough to notice that from the definition of $\frac{d}{du}\frac{d}{dW} H=h$, see \cite{fgn}, we get that
\begin{equation*}
\big\< \rho_s , \Delta H \big\>\;=\;\big\< \rho_s , \pfrac{d}{du}\pfrac{d}{dW}  H \big\>\;.
\end{equation*}
\end{proof}
%
% Next, we present a simpler proof that the one given in \cite{fgn} of uniqueness of weak solutions of  \eqref{he} and \eqref{hen}.
\begin{proposition}
 There exists a unique weak solution of \eqref{he} and of \eqref{hen}.
\end{proposition}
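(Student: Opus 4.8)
The plan is to treat existence and uniqueness separately, handling both the periodic equation \eqref{he} and the Neumann equation \eqref{hen} by the same spectral/duality scheme, the only difference being the choice of eigenbasis of the Laplacian.

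For existence I would exhibit explicit solutions by separation of variables. For \eqref{he} I would set $\rho(t,u)=\sum_{k\in\bb Z}\hat\rho_0(k)\,e^{-4\pi^2k^2t}\,e^{2\pi i ku}$ with $\hat\rho_0(k)$ the Fourier coefficients of $\rho_0$; for \eqref{hen} I would use the Neumann eigenbasis $\{\cos(k\pi u)\}_{k\ge 0}$, writing $\rho(t,u)=\sum_{k\ge 0}a_k\,e^{-k^2\pi^2t}\cos(k\pi u)$ with $a_k$ the cosine coefficients of $\rho_0$. In both cases the Gaussian decay of the time factors makes $\rho$ smooth for $t>0$ with $\rho_t\to\rho_0$ in $L^2$ as $t\to 0$, so $\rho$ is a classical solution and, integrating by parts as in the heuristic following Definition \ref{heat equation Neumann}, a weak solution in the sense of Definitions \ref{def edp 1} and \ref{heat equation Neumann}; for \eqref{hen} the homogeneous Neumann condition $\partial_u\rho_t(0)=\partial_u\rho_t(1)=0$ in particular yields $\rho\in L^2(0,T;\mc H^1)$. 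Alternatively, existence is already furnished by the hydrodynamic limit of Theorem \ref{th:hlrm} for $\beta\in[0,1)$ and $\beta\in(1,\infty]$.

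For uniqueness, which is the substantive part, I would use a duality (adjoint) argument rather than a direct energy method, the latter being unavailable for \eqref{he} where $\rho$ is only assumed measurable. By linearity it suffices to show that a weak solution with $\rho_0\equiv 0$ vanishes. Fix $t^*\in[0,T]$ and take a test datum $g$ that is a trigonometric polynomial (for \eqref{he}) or a cosine polynomial (for \eqref{hen}); these form dense families in $L^2$. Define the backward profile $H_s:=P_{t^*-s}g$, where $P_\tau$ denotes the heat flow written in the chosen eigenbasis. Because $g$ has only finitely many modes, $H$ is a genuine element of $C^{1,2}([0,T]\times\bb T)$ (resp.\ of $\A$), it satisfies $\partial_sH_s+\Delta H_s=0$, it meets $H_{t^*}=g$, and — crucially for the Neumann case — it inherits $\partial_uH_s(0)=\partial_uH_s(1)=0$. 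Substituting $H$ into the weak formulation \eqref{eqint1} (resp.\ \eqref{eqint3}) at time $t=t^*$, every term but $\<\rho_{t^*},g\>$ drops out: $\<\rho_0,H_0\>$ vanishes since $\rho_0\equiv 0$, the time integral vanishes since $\partial_sH_s+\Delta H_s\equiv 0$, and the boundary integral in \eqref{eqint3} vanishes since $\partial_uH_s(0)=\partial_uH_s(1)=0$. Hence $\<\rho_{t^*},g\>=0$ for every $g$ in the dense family, so $\rho_{t^*}=0$ in $L^2$, and since $t^*$ is arbitrary, $\rho\equiv 0$.

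The main obstacle I anticipate is purely technical: guaranteeing that the backward test function $H$ is admissible on all of $[0,T]$ — not merely on $[0,t^*]$, where the forward heat flow is smoothing — and that in the Neumann case it carries exactly the homogeneous Neumann condition so that the boundary integral cancels. Restricting $g$ to finitely supported (polynomial) data circumvents the ill-posedness of the backward heat equation, since $P_\tau g$ then makes sense for every $\tau\in\bb R$, and the density of such polynomials in $L^2$ recovers the full conclusion. I would also check that the regularity assumed of $\rho$ (measurability and boundedness for \eqref{he}; membership in $L^2(0,T;\mc H^1)$, with its well-defined lateral limits, for \eqref{hen}) suffices for every pairing in the substitution to be meaningful, which it is because $\bb T$ and $[0,1]$ are compact and $H$ is smooth.
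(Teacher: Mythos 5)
Your proof is correct and rests on the same spectral foundation as the paper's --- the Neumann cosine basis and the periodic trigonometric system --- but the mechanics differ enough to be worth comparing. The paper tests the weak formulation against the \emph{time-independent} eigenfunctions $H_k(u)=\sqrt 2\cos(k\pi u)$ (which satisfy $\partial_u H_k(0)=\partial_u H_k(1)=0$, killing the boundary term in \eqref{eqint3}), obtains the scalar identity $\<\rho_t,H_k\>=-(k\pi)^2\int_0^t\<\rho_s,H_k\>\,ds$, and concludes $\<\rho_t,H_k\>=0$ by Gronwall's inequality; completeness of the system then gives $\rho_t\equiv 0$, and the periodic case is handled identically with the basis $\{1,\sqrt2\cos(2k\pi u),\sqrt2\sin(2k\pi u)\}$. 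You instead build the integrating factor into the test function: for a finite-mode datum $g$ you take $H_s=P_{t^*-s}g$, so that $\partial_sH_s+\Delta H_s=0$ and the weak formulation collapses at $t=t^*$ to $\<\rho_{t^*},g\>=0$ outright, with no Gronwall step. Your observation that restricting to trigonometric or cosine polynomials makes the backward flow well defined on all of $[0,T]$ is exactly the right technical point, and since $P_{t^*-s}$ acts diagonally on finitely many modes, the Neumann condition $\partial_uH_s(0)=\partial_uH_s(1)=0$ is indeed inherited, so the boundary integral cancels as you claim. Mode by mode the two arguments are equivalent (your $H_s$ restricted to a single mode is just $e^{-k^2\pi^2(t^*-s)}H_k$), so what the duality route buys is the elimination of Gronwall at the cost of verifying admissibility of a time-dependent test function. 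One further difference worth flagging: the paper's proof of this proposition addresses \emph{only} uniqueness, existence being supplied elsewhere by the hydrodynamic limit (Theorem \ref{th:hlrm}) and by the limits constructed in Propositions \ref{lemmapdePT1} and \ref{lemmapdePT2}; your explicit Fourier-series solutions (or, as you note, the appeal to Theorem \ref{th:hlrm}) make the existence half self-contained, with the verification that the Neumann series lies in $L^2(0,T;\mc H^1)$ being the one point that genuinely needs the checking you indicate.
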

\begin{proof}
We start by showing uniqueness of \eqref{hen}. For that purpose, let $\rho_t$ be a weak solution of \eqref{hen} with $\rho_0(\cdot)\equiv{0}$.
 For $u\in{\bb T}$, let
$ H_k(u)\;=\;\sqrt{2}\cos(k\pi u)\,,\,k\in\bb N$.
Recalling the integral equation \eqref{eqint3}, for all $k\in\bb N$, $H_k\in C^2([0,1])$, $\p_u H_k(0)=\p_u H_k(1)=0$ and
\begin{equation*}
 \<\rho_t,H_k\>=-(k\pi)^2\int_0^t\<\rho_s, H_k\>ds.
\end{equation*}
 Now, by Gronwall's inequality it follows that  $\<\rho_t,H_k\>=0$, $\forall t>0$ and for all $k\in{\bb N}$.
Since $\{H_k\}_{k\in\bb N}$ is a complete orthonormal system in $L^2(\bb T)$, we obtain that $\rho_t(\cdot)\equiv 0$, $\forall t>0$.

 The uniqueness of weak solutions of \eqref{he} follows as above, but  considering instead the complete orthonormal system
$\{1\,,\sqrt{2}\cos(2k\pi u)\,,\,\sqrt{2}\sin(2k\pi u)\}_{k\in\bb N}$ composed of functions in $C^2(\bb T)$.
\end{proof}

\section*{Acknowledgements}
The authors would like to thank Milton Jara for important conversations on the subject.
PG thanks FCT (Portugal) for support through the research project ``Non-Equilibrium Statistical Physics" PTDC/MAT/109844/2009. PG thanks the Research Centre of Mathematics of the University of Minho, for the financial support provided by ``FEDER" through the ``Programa Operacional Factores de Competitividade  COMPETE" and by FCT through the research project PEst-C/MAT/UI0013/2011.
The authors thank FCT and Capes for the financial support through the research project ``Non-Equilibrium Statistical Mechanics of Stochastic Lattice Systems". The authors thank the
warm hospitality of IMPA, where this work was finished.

\bibliographystyle{amsplain}

\end{document}